\newcommand{\updot}[1]{\raisebox{0.9pt}{$\stackrel{\bullet}{#1}$}} 
\newcommand{\paren}[1]{\left(#1\right)}
\newcommand{\sqparen}[1]{\left[#1\right]}
\newcommand{\ffactrl}[2]{#1^{\underline{#2}}}
\newcommand{\Realdom}{\mathbf{R}}
\newcommand{\Intdom}{\mathbf{Z}}
\newcommand{\Lspace}{\mathit{l}}
\newcommand{\Hspace}{\mathit{h}}
\newcommand{\Charp}[1]{P_{#1}}
\newcommand{\Chark}[1]{\tilde{P}_{#1}}
\newcommand{\IntCharp}[1]{\pi_{#1}}
\newcommand{\IntChark}[1]{\tilde{\pi}_{#1}}
\numberwithin{equation}{section}
\numberwithin{table}{section}
\numberwithin{figure}{section}
\theoremstyle{plain}
\newtheorem{theorem}{Theorem}[section]
\newtheorem{lemma}[theorem]{Lemma}
\newtheorem{corollary}[theorem]{Corollary}
\newtheorem{proposition}[theorem]{Proposition}
\theoremstyle{definition}
\newtheorem{definition}{Definition}[section]
\newtheorem{example}[definition]{Example}
\theoremstyle{remark}
\newtheorem*{remark}{Remark}
\title{Approximations for the Moments of Nonstationary and
  State Dependent Birth-Death Queues}
\author{Stefan Engblom$^{\mbox{\tiny{1}}}$ \\
  [2ex] \parbox[c]{6cm}{\centering
    \footnotesize{\textit{$^{\mbox{\tiny{\rm{1}}}}$Division of
        Scientific Computing \\ Department of Information Technology
        \\ Uppsala University \\ SE-751 05 Uppsala, Sweden. \\ email:
        \texttt{stefane@it.uu.se}}}} \and Jamol
  Pender$^{\mbox{\tiny{2}}}$ \thanks{Corresponding author.}\\
  [2ex] \parbox[c]{6cm}{\centering
    \footnotesize{\textit{$^{\mbox{\tiny{\rm{2}}}}$School of
        Operations Research and Information Engineering \\ Cornell
        University \\ email: \texttt{jjp274@cornell.edu}}}}}
\date{\today}
\begin{document}

\maketitle

\begin{abstract}
  In this paper we propose a new method for approximating the
  nonstationary moment dynamics of one dimensional Markovian
  birth-death processes. By expanding the transition probabilities of
  the Markov process in terms of Poisson-Charlier polynomials, we are
  able to estimate any moment of the Markov process even though the
  system of moment equations may not be closed. Using new weighted
  discrete Sobolev spaces, we derive explicit error bounds of the
  transition probabilities and new weak a priori estimates for
  approximating the moments of the Markov processs using a truncated
  form of the expansion. Using our error bounds and estimates, we are
  able to show that our approximations converge to the true stochastic
  process as we add more terms to the expansion and give explicit
  bounds on the truncation error. As a result, we are the first paper
  in the queueing literature to provide error bounds and estimates on
  the performance of a \emph{moment closure approximation}. Lastly, we
  perform several numerical experiments for some important models in
  the queueing theory literature and show that our expansion
  techniques are accurate at estimating the moment dynamics of these
  Markov process with only a few terms of the expansion.

  \medskip

  \noindent \textbf{Keywords:} Multi-Server Queues, Spectral-Galerkin
  method, Discrete approximation, Unbounded domain, Abandonment,
  Time-Varying Rates, Birth-Death Processes, Poisson-Charlier
  Polynomials.

  \medskip

  \noindent
  \textbf{AMS subject classification:} NNXMM.
\end{abstract}


\section{Introduction}

Birth-Death Markov processes are very important modeling tools in
engineering, operations research, mathematics, physics, and a variety
of other fields. The development of Markovian stochastic models has
made a profound impact on the way we understand complex dynamics in
these fields of study. One particular way to explore the dynamics of
these processes that transcends a particular application setting is to
study the behavior of the transition probabilities and the state
probabilities, which provide the entire distribution of the process
for all time points of interest. However, an explicit study of the
transition probabilities or state probabilities has often eluded
researchers since the transition or state probabilities do not have
explicit solutions in general with some exceptions in some very
special cases. Moreover, when analyzing large models such as large
scale service systems or moderately sized queueing networks a full
understanding of the transition or state probabilities in their
explicit form is rather intractable in both a mathematical and
numerical sense.

Thus, many researchers have spent considerable effort in trying to
develop ways of understanding the moments of Markovian birth-death
processes.  Moments like the mean and variance can provide
considerable insight into understanding the ``typical'' stochastic
behavior of the system. However, a full understanding of the moments
also is quite difficult. One main difficulty that is often encountered
is that the system of differential equations describing the moments of
the birth-death process might not be \emph{closed}. This means that it
is necessary that one know the true distribution of the Markov process
or at least its higher moments in order to compute the lower moments
of the stochastic process.

One common approach to circumvent the lack of closure is to apply
asymptotic methods such as heavy traffic limit theorems. Such results
scale or speed up the rates of the stochastic process in order to
simplify the stochastic analysis of the Markov process, see for
example Massey \cite{Mas} and Mandelbaum et al \cite{MMR}. However,
these methods are asymptotic and therefore only apply when the
stochastic processes rates are infinite or very large. They do not
apply directly to a process that has moderate rates. Moreover,
currently, there are no methods to determine how close our
nonstationary approximations are to the true stochastic process for a
particular finite rate.

An alternative method for computing the moments of the Markov process
is to apply what are known as \emph{closure approximations} to the
stochastic process under consideration. Closure approximations attempt
to intelligently approximate the distribution of the Markov process
and use this approximate distribution to estimate the moments of the
stochastic process. By using the closure approximation, it should be
simple to calculate the moment dynamics and perhaps more importantly,
the moment dynamics should be close to the true dynamics of the
original process. See for example Krishnarajah et al \cite{KCMG,
  KCMG2} in the epidemic process setting and Rothkopf et al \cite{RO},
Clark \cite{Clark}, and Taaffe et al \cite{TO} in the queueing process
setting.

A more recent method developed by Massey and Pender \cite{MP, MP2,
  MP3} is to use Hermite polynomial expansions to approximate the
distribution of the queue length process. Taking two or three terms of
the expansion works quite well. Since the Hermite polynomials are
orthogonal to the Gaussian distribution, which has support on the
entire real line, these Hermite polynomial chaos expansions do not
take into account the discreteness of the queueing process and the
fact that the queueing process is non-negative. Work by Pender
\cite{Pen3} uses Laguerre polynomials, which are orthogonal with
respect to the gamma distribution on the positive real line, but also
ignores the discrete nature of the queueing process. Lastly, Pender
\cite{Pen2} provides a Poisson-Charlier expansion for the queue length
distribution, however, this work does not prove error bounds for the
method and nor does it expand the transition or state probabilities,
which we will show is much easier to do. For the continuous
distributions like the Hermite and Laguerre it is also quite difficult
to prove error bounds on these approximations due to the discrete
nature of the queueing process. Therefore, in the context of queueing
theory, it is still an open problem to develop closure methods using a
\emph{discrete} reference distribution with provable error bounds for
the truncation error.

In this paper, we study one dimensional birth-death models that have
nonstationary as well as non-trivial state dependent rates. To develop
approximations for the moments and the state probabilities, we use the
Poisson-Charlier polynomials to expand the state probabilities of the
Markov process in terms of a Poisson reference distribution. This
Poisson representation of the transition probabilities is quite
natural since a linear birth-death process such as an infinite server
queue, has a Poisson distribution when initialized at zero or with a
Poisson distribution. Therefore, the terms that serve to correct the
true distribution from the Poisson reference distribution can be
written explicitly in terms of integrals with respect to the Poisson
distribution, which is quite simple.  In addition, we should expect
that processes that are close to an infinite server queue, should also
be approximated quite well with a small number of terms. Moreover, the
Poisson reference distribution also allows us to derive explicit
approximations for many important stochastic models in the operations
research literature such as the nonstationary Erlang-A model,
nonstationary Erlang loss model, and even some quadratic birth-death
models that are relevant in the applied probability literature.  This
is because we are able to explicit calculate the rate functions that
appear in the functional forward equations using the discrete
representation of the incomplete gamma function.

Our approach, which is similar to the spectral Galerkin method of
Wulkow \cite{wulkow_discrete,wulkow_discrete2}, later developed by
Deuflhard et al \cite{DHJW}, and independently by Engblom
\cite{master_charlier_th,master_charlier_app}, not only exploits the
properties of the Poisson distribution, but also allows us to derive
explicit bounds for the transition probabilties and weak \textit{a
  priori} estimates for estimating the moments of our approximation
method. These bounds and estimates help us understand how many terms
we might need to approximate the moments of our birth-death process
with good accuracy. Moreover, we can show that as we add more terms to
the expansion, the approximate transition probabilities and the
moments of the birth-death model converge to the true transition
probabilties and moments of the underlying Markov process. However,
unlike their continuous counterparts, discrete orthogonal polynomials
such as the Poisson-Charlier and their properties are much less
studied. This forces us to define new weighted Sobolev spaces to
analyze the convergence of our discrete closure approximation. These
Sobolev spaces allow us to prove spectral convergence of the method,
with error estimates decaying faster than any inverse power of the
expansion order $N$, and also allow us to prove that the moments
converge by adding more terms to the approximation of the transition
probabilities.

\subsection*{Contributions to Literature}  

In this work we make the following contributions:

\begin{itemize}
\item We expand the state probabilities of one-dimensional birth-death
  Markov processes in terms of Poisson-Charlier polynomials.

\item We prove the convergence of the state probabilities and the
  moments of one-dimensional birth-death Markov processes as we add
  more terms to the Poisson-Charlier expansion by developing the
  appropriate Sobolev sequence spaces.

\item We derive explicit approximations of several stochastic models
  and show that a small number of terms is needed to capture important
  moment behavior of these models. We also show numerically that these
  explicit approximatons are quite accurate at describing the moment
  dynamics of the underlying Markov process.
\end{itemize}

\subsection*{Organization of Paper}

The rest of the paper is organized as follows. In Section 2, we
introduce the nonstationary and state dependent birth death model that
we consider for the remainder of the paper. In Section 3, we introduce
the Poisson-Charlier expansion method that we use in the paper and
describe the new sequence spaces that are needed to prove convergence
of our method. In Section 4 we derive explicit approximations for two
important stochastic models using the zeroth order and first order
approximations for the transition probabilities. In Section 5, we
provide extensive numerical results illustrating the power of our
method. Lastly, in the Appendix, we provide the proofs for the
explicit approximations that are presented in the paper and provide a
brief summary of Poisson-Charlier polynomials and spectral
approximations.


\section{Nonstationary Birth-Death Model}

In this section, we give a description of the birth-death model that
is under consideration. Birth-death processes are very important
processes in the stochastic community. They arise in variety of
applications from queueing theory, chemical reaction networks,
neuroscience, and healthcare modelling. Thus, it is important to have
a good understanding of the dynamics of these models. In addition, in
all of these applications, it is also very important to understand the
nonstationary and state dependent aspects of these models.
Nonstationary and state dependent dynamics are prevalent in our
society, especially in a queueing context, where arrivals of customers
is almost never stationary and often depend substantially on the size
of the queue.

We consider a continuous time one-dimensional nonhomogeneous
birth-death process (BDP) $Q(t)$, $t \geq 0 $ on the state space
$\Intdom_{+} = \{0,1,2,\ldots\}$ with time dependent and state
dependent rate functions.  The rate function for the birth process is
denoted by $\lambda_x(t)$ and the rate functions for the death process
are denoted by $\mu_x(t)$, $t \geq 0 $, $x \in \Intdom_{+} $.
Moreover, we have that 
\begin{eqnarray}
  \label{trans_prob}
  \mathbb{P}( Q(t + h) = j | Q(t) = i )  =
  \begin{cases}
    \lambda_{i}(t) \cdot h  + o(h)  & \mbox{if } j \mbox{ = } i + 1, \\
    \mu_{i}(t) \cdot h + o(h)  & \mbox{if } j \mbox{ = } i - 1, \\
    1 - \lambda_i(t) \cdot h - \mu_i(t) \cdot h + o(h)  & \mbox{if } j \mbox{ = } i, \\
    o(h)  & \mbox{if }  |i - j|  >  1.
  \end{cases}
\end{eqnarray}

It is assumed that the time interval $h > 0$ is sufficiently small to
eliminate the possibility of multiple events occurring in the same
interval. We also denote $o(h) = o_i(t,h)$, $i \in \Intdom_{+} $ such
that
\begin{equation}
  \lim_{h \to 0} \frac{\sup_i \ o_i(t,h)}{h} = 0 .
\end{equation}

Thus, we define the transition probabilities and the state
probabilities respectively as
\begin{equation}
  p_{ij}(s,t) = \mathbb{P}( Q(t) = j | Q(s) = i )  ,
\end{equation}
and 
\begin{equation}
  p_{i}(t) = \mathbb{P}( Q(t) = i | Q(0) = 0)  .
\end{equation}
If we let $ \mathbf{p(t)} \equiv \{ p_0(t), p_1(t), .... p_{\infty}(t)
\} $ and we let $\mathcal{A}(t)$ be the matrix induced by
\eqref{trans_prob}, then we have that
\begin{equation}
\updot{\mathbf{p}}(t) = \mathcal{A}(t) \mathbf{p}.
\end{equation}

We assume that the rates of birth and death are given by the
transition probabilities of a Markov chain. Mathematically, this
means that the changes in the system in a small time interval are
determined by the following transition probabilities
\begin{eqnarray*}
\mathbb{P}\{ \Delta Q(t + \Delta t)  = 1 \} &\equiv & \psi_{\alpha} (t,Q(t)) \cdot \Delta t  \\
\mathbb{P}\{ \Delta Q(t + \Delta t)  = -1 \} &\equiv & \psi_{\delta} (t,Q(t)) \cdot \Delta t   \\
\end{eqnarray*}
Br{\'e}maud \cite[\S8.4.3]{BremaudMC} gives verifiable conditions for
non-explosion of the birth and death generator. Typically in the
physical or chemical literature the functions $\psi_{\alpha}$ and
$\psi_{\delta} $ are polynomials functions of the state
$Q(t)$. Although they may be non-linear, they are smooth functions of
the state process. However, in fields such as queueing theory, these
functions can be nonlinear and non-smooth with respect to the state
variable $Q(t)$. In fact, these rate functions are sometimes even
discontinuous.

From now on for ease of notation, we will suppress the time dependence
of the stochastic process $Q(t)$ and the rate functions. Using the
above functional form of the transition probabilities, we can state
the Kolmogorov forward equations of the Markov process. Implicit
conditions for the validity of the forward equations are found in
\cite[\S8.3.2]{BremaudMC}, while general explicit conditions can be
found in \cite{FosterLyapunov}. Compare also the discussion in
\cite{jsdestab} targeting applications in chemical kinetics. In the
present case and for the purposes herein, the conditions in
\cite{jsdestab} simplify considerably.
\begin{proposition}[see Theorem~4.5 in \cite{jsdestab}]
  \label{FFEconds}
  Suppose the birth and death rates satisfy for $x, y \in
  \Intdom_{+}$,
  \begin{align}
    \psi_\alpha(x)+\psi_\delta(x) &\le C(1+x),
  \end{align}
  and suppose further that $f: \mathbf{Z}_+ \to \mathbf{R}$ is bounded
  by some finite $p$th order moment, $|f(x)| \le C_{p}(1+x^{p})$. Then
  the Markovian birth-death process satisfies the following set of
  functional Kolmogorov forward equations:
  \begin{eqnarray*}
    \updot{E}[f(Q(t))] 
    &\equiv& \updot{E}[f(Q(t)) \ | \ Q(0) = 0 \ ] \\
    &=&E[\psi_{\alpha}(t,Q) \cdot ( f(Q+1) - f(Q) ) ] + 
    E[ \psi_{\delta}(t,Q) \cdot ( f(Q-1) - f(Q) ) ] .
  \end{eqnarray*}
\end{proposition}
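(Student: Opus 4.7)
My plan is to verify the hypotheses of Theorem~4.5 in \cite{jsdestab}, which in this one-dimensional setting reduce to a classical Foster--Lyapunov / Dynkin argument. The three key steps will be (i) non-explosion of $Q(t)$, (ii) propagation in time of polynomial moment bounds matching the growth of $f$, and (iii) a rigorous interchange of differentiation and expectation.

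First I would apply Br{\'e}maud's non-explosion criterion \cite[\S8.4.3]{BremaudMC} with the simple Lyapunov function $V_1(x) = 1+x$. A direct computation using the generator $\mathcal{A}$ implicit in \eqref{trans_prob} together with the hypothesis $\psi_\alpha(x)+\psi_\delta(x) \le C(1+x)$ yields $|\mathcal{A} V_1(x)| \le C(1+x) = C \cdot V_1(x)$, so $V_1$ grows at most exponentially in expectation, ruling out explosion on every compact time interval.

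Next I would replace $V_1$ by $V_p(x) = (1+x)^p$ and use the binomial identity $(1 + (x\pm 1))^p - (1+x)^p = O((1+x)^{p-1})$ to obtain, after cancelling leading-order terms,
\begin{equation*}
|\mathcal{A} V_p(x)| \le C_p (\psi_\alpha(x)+\psi_\delta(x))(1+x)^{p-1} \le C_p'(1+x)^p = C_p' V_p(x).
\end{equation*}
Combined with Dynkin's formula applied to the stopped process $Q(t\wedge \tau_n)$ for an exhausting sequence of stopping times $\tau_n$, Fatou's lemma and Gronwall's inequality then give $E[V_p(Q(t))] \le V_p(0)\,e^{C_p' t}$, uniformly on compact intervals. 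Since $|f(x)| \le C_p(1+x^p)$, both $E[f(Q(t))]$ and $E[\mathcal{A}f(Q(t))]$ are finite and locally bounded in $t$.

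Finally, Dynkin's formula, now valid thanks to non-explosion and the $p$th-moment bound, gives
\begin{equation*}
E[f(Q(t))] - f(0) = \int_0^t E[\mathcal{A}f(Q(s))]\,ds,
\end{equation*}
and the locally bounded integrand justifies differentiation in $t$ via Lebesgue's differentiation theorem, producing the claimed forward equation with $\mathcal{A}f(Q) = \psi_\alpha(Q)(f(Q+1)-f(Q)) + \psi_\delta(Q)(f(Q-1)-f(Q))$. The hard part will be step (ii): establishing the generator estimate on $V_p$ must not rely on any smoothness of $\psi_\alpha,\psi_\delta$, since in queueing applications these rates can be non-Lipschitz or even discontinuous in $x$. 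Fortunately, the binomial cancellation is a purely algebraic identity; only the linear-growth hypothesis on $\psi_\alpha+\psi_\delta$ is actually used, which is precisely what the proposition assumes, so the simplified conditions promised in the paragraph preceding the statement are indeed sufficient.
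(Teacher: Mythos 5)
The paper never proves this proposition itself: it is stated with the attribution ``see Theorem~4.5 in \cite{jsdestab}'', and the surrounding text only remarks that the conditions of that theorem simplify considerably in the present one-dimensional setting. Your proposal therefore does more than the paper does, reconstructing the Foster--Lyapunov/Dynkin argument that underlies the cited result, specialized to nearest-neighbour jumps: non-explosion from $\mathcal{A}V_1 \le C V_1$ with $V_1(x) = 1+x$, propagation of polynomial moments from $\mathcal{A}V_p \le C_p' V_p$ via Dynkin's formula for the stopped process plus Fatou and Gronwall, and finally differentiation of the integrated forward equation. The skeleton is sound, your generator estimates are correct, and you rightly emphasize that only the linear-growth bound on $\psi_\alpha + \psi_\delta$ enters, with no smoothness in $x$ --- exactly the ``simplification'' the paper alludes to. Three points need tightening, none structural. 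First, $|\mathcal{A}f(x)| \le (\psi_\alpha(x)+\psi_\delta(x)) \cdot 2\max_{|y-x|\le 1}|f(y)| \le C''(1+x)^{p+1}$, so finiteness and local boundedness of $E[\mathcal{A}f(Q(t))]$ require the $(p{+}1)$th moment, not the $p$th; this is harmless because your step (ii) delivers moments of \emph{every} order, but it should be said explicitly, and the same higher-moment bound is what provides the uniform integrability needed to pass $n \to \infty$ in Dynkin's formula for the signed function $f$ itself (Fatou alone handles only the nonnegative $V_p$). Second, Lebesgue differentiation yields the forward equation only for almost every $t$; to obtain it at every $t$, as the statement asserts, you need continuity of $s \mapsto E[\mathcal{A}f(Q(s))]$, which follows by dominated convergence from the uniform moment bound together with continuity of the rates in $t$ --- a regularity hypothesis implicit in the paper's nonstationary setting that your autonomous notation $\psi_\alpha(x)$ quietly suppresses, and which also requires the constant $C$ to be uniform in $t$ on compacts for Gronwall to apply. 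Third, in the statement the expectation is conditioned on $Q(0)=0$, so $V_p(Q(0))$ is bounded and your Gronwall bound is uniform on compact time intervals as claimed. With these caveats recorded, your argument is a correct, self-contained substitute for the citation.
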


\begin{proposition}\label{FFE}
Suppose that $\psi_{\alpha}(t,Q) $ and $\psi_{\delta}(t,Q) $ satisfy the sufficient conditions given in \ref{FFEconds}, then we have that the $m^{th}$ moment of the birth-death process satisfies the following differential equation
\begin{eqnarray}
\updot{E}[Q^m(t)] &=& \sum^{m-1}_{k=0} { m \choose k} \cdot \left( E[Q^k \cdot \psi_{\alpha}(t,Q) ] + (-1)^{m-k} \cdot E[ Q^k \cdot \psi_{\delta}(t,Q)  ]  \right)
\end{eqnarray}


\begin{proof}
Using the binomial theorem, we have that
\begin{eqnarray*}
\updot{E}[Q^m(t)] &=& E[\psi_{\alpha}(t,Q) \cdot ( (Q +1)^m - Q^m ) ] + 
E[ \psi_{\delta}(t,Q) \cdot ( (Q -1)^m - Q^m) ]\\
&=& E\left[\psi_{\alpha}(t,Q) \cdot \left(  \sum^{m}_{k=0} { m \choose k} \cdot Q^k - Q^m \right) \right] \\
&+&  E\left[ \psi_{\delta}(t,Q) \cdot \left(  \sum^{m}_{k=0} { m \choose k} \cdot (-1)^{m-k} \cdot Q^k -  Q^m \right) \right]\\
&=& \sum^{m-1}_{k=0} { m \choose k} \cdot E[Q^k \cdot \psi_{\alpha}(t,Q) ] +   \sum^{m-1}_{k=0} { m \choose k} \cdot (-1)^{m-k} \cdot E[ Q^k \cdot \psi_{\delta}(t,Q)  ] \\
&=& \sum^{m-1}_{k=0} { m \choose k} \cdot \left( E[Q^k \cdot \psi_{\alpha}(t,Q) ] + (-1)^{m-k} \cdot E[ Q^k \cdot \psi_{\delta}(t,Q)  ]  \right) 
\end{eqnarray*}
\end{proof}
\end{proposition}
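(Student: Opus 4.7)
The plan is to reduce the proposition directly to Proposition~\ref{FFEconds} by choosing the test function $f(x) = x^m$, and then to expand the forward differences $(Q+1)^m - Q^m$ and $(Q-1)^m - Q^m$ using the binomial theorem. First I would check that the hypothesis of Proposition~\ref{FFEconds} is met: since $f(x) = x^m$ trivially satisfies $|f(x)| \le 1 + x^m$, we can invoke the functional forward equation with this $f$ under the growth assumption on $\psi_\alpha + \psi_\delta$ that is already part of the statement being proved.

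Having done that, Proposition~\ref{FFEconds} gives
\begin{equation*}
\updot{E}[Q^m(t)] = E\bigl[\psi_\alpha(t,Q) \cdot ((Q+1)^m - Q^m)\bigr] + E\bigl[\psi_\delta(t,Q) \cdot ((Q-1)^m - Q^m)\bigr].
\end{equation*}
Next I would apply the binomial theorem to get $(Q+1)^m = \sum_{k=0}^{m} \binom{m}{k} Q^k$ and $(Q-1)^m = \sum_{k=0}^{m} \binom{m}{k} (-1)^{m-k} Q^k$. In both cases the $k = m$ term equals $Q^m$ and is canceled by the subtraction, leaving a sum from $k = 0$ to $k = m-1$. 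Pulling the (deterministic) coefficients $\binom{m}{k}$ and $(-1)^{m-k}$ out of the expectations and combining the birth and death contributions term by term then yields exactly the claimed identity.

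The step that needs to be trusted, rather than computed, is the interchange of expectation and the finite sum in the binomial expansion; this is legitimate because the sums are finite and each summand $Q^k \psi_\alpha(t,Q)$ and $Q^k \psi_\delta(t,Q)$ is integrable under the hypotheses, as $\psi_\alpha + \psi_\delta$ is dominated by $C(1+Q)$ and the relevant moments of $Q$ are finite by the same non-explosion/moment argument that underlies Proposition~\ref{FFEconds}. So the only real ``content'' is the binomial manipulation, which is mechanical; I do not anticipate any genuine obstacle beyond careful bookkeeping of the cancellation of the $Q^m$ terms and the sign pattern $(-1)^{m-k}$ coming from the death transitions.
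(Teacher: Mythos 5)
Your proposal is correct and takes essentially the same route as the paper's own proof: invoke the functional forward equation of Proposition~\ref{FFEconds} with $f(x) = x^m$, expand $(Q\pm 1)^m$ by the binomial theorem, cancel the $k = m$ terms, and combine the birth and death sums. Your added checks---that $x^m$ satisfies the polynomial growth hypothesis and that each summand is integrable so the finite sum can be exchanged with the expectation---are details the paper leaves implicit, not a departure in method.
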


\begin{corollary}
  Using Proposition \ref{FFE} the time derivatives of the first four
  moments satisfy the following equations

\begin{eqnarray*}
\updot{E}[Q(t)] &=&E[\psi_{\alpha}(t,Q) ] - E[ \psi_{\delta}(t,Q)  ] \\
\updot{E}[Q^2(t)]  &=& E[\psi_{\alpha}t,Q) ] + E[ \psi_{\beta}(t,Q)  ] +  2 \cdot E[Q \cdot \psi_{\alpha}(t,Q) ] - 2 \cdot E[Q \cdot \psi_{\beta}(t,Q) ] \\
\updot{E}[Q^3(t)]  &=& E[\psi_{\alpha}(t,Q) ] - E[ \psi_{\beta}(t,Q)  ] + 3 \cdot E[Q \cdot \psi_{\alpha}(t,Q) ] + 3 \cdot E[Q \cdot \psi_{\beta}(t,Q) ]  \\
&&+ 3 \cdot E\left[ Q^2 \cdot \psi_{\alpha}(t,Q) \right] - 3 \cdot E \left[Q^2 \cdot \psi_{\beta}(t,Q) \right] \\
\updot{E}[Q^4(t)]  &=& E[\psi_{\alpha}(t,Q) ] + E[ \psi_{\beta}(t,Q)  ] + 4 \cdot E[Q \cdot \psi_{\alpha}(t,Q) ] - 4 \cdot E[Q \cdot \psi_{\beta}(t,Q) ]  \\
&&+ 6 \cdot E\left[ Q^2 \cdot \psi_{\alpha}(t,Q) \right] + 6 \cdot E \left[Q^2 \cdot \psi_{\beta}(t,Q) \right] \\
&&+ 4 \cdot E[Q^3 \cdot \psi_{\alpha}(t,Q) ] - 4 \cdot E[Q^3 \cdot \psi_{\beta}(t,Q) ]  .
\end{eqnarray*}
\end{corollary}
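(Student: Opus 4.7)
The plan is simply to substitute $m = 1, 2, 3, 4$ into the general formula established in Proposition~\ref{FFE}, namely
\[
\updot{E}[Q^m(t)] = \sum_{k=0}^{m-1} \binom{m}{k}\left(E[Q^k\psi_\alpha(t,Q)] + (-1)^{m-k}E[Q^k\psi_\delta(t,Q)]\right),
\]
and collect terms. There is essentially no structural work to do beyond bookkeeping of binomial coefficients and alternating signs; verification of the hypothesis that $\psi_\alpha+\psi_\delta\le C(1+x)$ is inherited from the assumptions of Proposition~\ref{FFE}, and the polynomial test function $f(x)=x^m$ satisfies the $p$th order moment bound with $p=m$.

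Concretely, for $m=1$ the sum collapses to the $k=0$ term with coefficient $\binom{1}{0}=1$ and sign $(-1)^1=-1$, yielding $\updot{E}[Q] = E[\psi_\alpha]-E[\psi_\delta]$. For $m=2$ the two terms $k=0,1$ produce $\binom{2}{0}=1$ with sign $(-1)^2=+1$ and $\binom{2}{1}=2$ with sign $(-1)^1=-1$, giving the stated expression. The cases $m=3$ and $m=4$ proceed identically, using the binomial coefficients $(1,3,3)$ and $(1,4,6,4)$ paired with the sign patterns $(-1,+1,-1)$ and $(+1,-1,+1,-1)$ respectively. In each case the coefficient of $E[Q^k\psi_\delta]$ inherits the alternating sign $(-1)^{m-k}$, which explains the pattern of plus and minus signs in the corollary as stated (note that the statement writes $\psi_\beta$ where $\psi_\delta$ is meant, but the arithmetic is unaffected).

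The only thing to be careful about is not to include a $k=m$ term: the identity $(Q\pm 1)^m - Q^m$ which underlies Proposition~\ref{FFE} cancels the $\binom{m}{m}Q^m$ contribution on both the birth and death sides, so the sum terminates at $k=m-1$. There is no genuine obstacle in this corollary — it is a direct specialization of the preceding proposition and requires no additional hypotheses, limiting arguments, or estimates. A one-line remark noting that the formulas follow by substitution into Proposition~\ref{FFE} would suffice, with the explicit expansions included for the reader's convenience and for later use in the closure-approximation analysis of Section~4.
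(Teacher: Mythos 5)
Your proof is correct and takes the same approach the paper intends: the corollary is a direct specialization of Proposition~\ref{FFE} at $m=1,2,3,4$, and your binomial coefficients and sign patterns $(-1)^{m-k}$ all check out against the stated formulas (including your correct observation that the paper's $\psi_{\beta}$ is a typo for $\psi_{\delta}$). The paper offers no separate proof for this corollary, so your substitution argument is exactly what was meant.
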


\begin{proposition}
  Moreover, we can also derive expressions for the first four cumulant
  moments of the birth death process. The first four cumulants have
  the following expressions

\begin{eqnarray*}
\updot{E}[Q(t)] &=&E[\psi_{\alpha}(t,Q) ] - E[ \psi_{\delta}(t,Q)  ] \\
\updot{\mathrm{Var}}[Q(t)]  &=& E[\psi_{\alpha}t,Q) ] + E[ \psi_{\beta}(t,Q)  ] +  2 \cdot \mathrm{Cov}[Q, \psi_{\alpha}(t,Q) ] - 2 \cdot \mathrm{Cov}[Q, \psi_{\beta}(t,Q) ] \\
\updot{C^{(3)}}[Q(t)] &=& E[\psi_{\alpha}(t,Q) ] - E[ \psi_{\beta}(t,Q)  ] + 3 \cdot \mathrm{Cov}[Q, \psi_{\alpha}(t,Q) ] + 3 \cdot \mathrm{Cov}[Q, \psi_{\beta}(t,Q) ]  \\
&&+ 3 \cdot \mathrm{Cov}\left[ \overline{Q}^2, \psi_{\alpha}(t,Q) \right] - 3 \cdot \mathrm{Cov} \left[\overline{Q}^2, \psi_{\beta}(t,Q) \right] \\
\updot{C^{(4)}}[Q(t)] &=& E[\psi_{\alpha}(t,Q) ] + E[ \psi_{\beta}(t,Q)  ] + 4 \cdot \mathrm{Cov}[Q, \psi_{\alpha}(t,Q) ] - 4 \cdot \mathrm{Cov}[Q, \psi_{\beta}(t,Q) ]  \\
&&+ 6 \cdot \mathrm{Cov}\left[ \overline{Q}^2, \psi_{\alpha}(t,Q) \right] + 6 \cdot \mathrm{Cov} \left[\overline{Q}^2, \psi_{\beta}(t,Q) \right] \\
&&+ 4 \cdot \mathrm{Cov}[\overline{Q}^3, \psi_{\alpha}(t,Q) ] - 4 \cdot \mathrm{Cov}[\overline{Q}^3, \psi_{\beta}(t,Q) ]  \\
&&+ 12 \cdot \mathrm{Var}[Q] \cdot \left( \mathrm{Cov}[Q, \psi_{\alpha}(t,Q) ] + \mathrm{Cov}[Q, \psi_{\beta}(t,Q) ]  \right)
\end{eqnarray*}
where $\overline{Q} = Q - E[Q]$ and $\mathrm{Cov}[f(Q),g(Q)] = E[f(Q) \cdot g(Q)] - E[f(Q)] \cdot E[g(Q)] $. 
\end{proposition}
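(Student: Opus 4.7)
The plan is to apply the functional forward equation of Proposition~\ref{FFEconds} to the time-dependent test functions $f(x,t) = (x-m(t))^{k}$, where $m(t) := E[Q(t)]$, for $k=1,2,3,4$. Because $f$ depends explicitly on $t$, an extra term $E[\partial_{t}f] = -k\,\dot{m}\,E[\overline{Q}^{\,k-1}]$ appears in the Dynkin identity, and after binomial expansion of $(Q\pm 1 - m)^{k} - (Q-m)^{k}$ one obtains
\begin{equation*}
\frac{d}{dt}E[\overline{Q}^{\,k}]
 = -k\,\dot{m}\,E[\overline{Q}^{\,k-1}]
 + \sum_{j=0}^{k-1}\binom{k}{j}\Bigl(E[\overline{Q}^{\,j}\psi_{\alpha}]
 + (-1)^{k-j}E[\overline{Q}^{\,j}\psi_{\delta}]\Bigr),
\end{equation*}
with $\overline{Q} := Q - m$. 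The case $k=1$ reads off $\dot{m} = E[\psi_{\alpha}]-E[\psi_{\delta}]$, i.e.\ the first formula in the statement.

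For $k=2$, the term $-2\dot{m}E[\overline{Q}]$ drops since $E[\overline{Q}]=0$. Writing $E[\overline{Q}^{\,j}\psi] = \mathrm{Cov}[\overline{Q}^{\,j},\psi] + E[\overline{Q}^{\,j}]E[\psi]$ and observing that $\mathrm{Cov}[\overline{Q},\psi] = \mathrm{Cov}[Q,\psi]$ reproduces the stated expression for $\dot{\mathrm{Var}}[Q]$ directly.

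For $k=3$, the key cancellation is that $E[\overline{Q}^{2}\psi] = \mathrm{Cov}[\overline{Q}^{2},\psi] + \mathrm{Var}[Q]\,E[\psi]$, so the $j=2$ summand contributes an extra $3\mathrm{Var}[Q](E[\psi_{\alpha}]-E[\psi_{\delta}]) = 3\mathrm{Var}[Q]\dot{m}$ that is exactly annihilated by $-3\dot{m}E[\overline{Q}^{2}] = -3\dot{m}\,\mathrm{Var}[Q]$ from $\partial_{t}f$. Since $E[\overline{Q}^{3}] = C^{(3)}$, the remaining terms are precisely the four contributions listed for $\dot{C}^{(3)}$.

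For $k=4$, one starts from the cumulant identity $C^{(4)} = E[\overline{Q}^{\,4}] - 3\mathrm{Var}[Q]^{2}$, so $\dot{C}^{(4)} = \dot{E}[\overline{Q}^{\,4}] - 6\mathrm{Var}[Q]\dot{\mathrm{Var}}[Q]$. Two parallel cancellations then occur: the decomposition of $E[\overline{Q}^{3}\psi]$ produces a $4C^{(3)}\dot{m}$ contribution that kills the $-4\dot{m}E[\overline{Q}^{3}]$ term from $\partial_{t}f$, and the decomposition of $E[\overline{Q}^{2}\psi]$ produces a $6\mathrm{Var}[Q](E[\psi_{\alpha}]+E[\psi_{\delta}])$ piece that cancels the matching part of $-6\mathrm{Var}[Q]\dot{\mathrm{Var}}[Q]$. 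The leftover part of $-6\mathrm{Var}[Q]\dot{\mathrm{Var}}[Q]$ provides the explicit $\mathrm{Var}[Q]\cdot\mathrm{Cov}[Q,\psi]$ correction recorded in the statement. The hard part will be the bookkeeping in this $k=4$ step: one has to verify that every central moment of order three or four multiplied by a rate expectation reassembles into a pure covariance, and that the correction $-3\mathrm{Var}[Q]^{2}$ in the definition of $C^{(4)}$ is exactly what is needed to absorb all such terms. Everything else is a direct substitution of the raw-moment equations from the preceding Corollary.
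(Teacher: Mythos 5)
Your derivation takes a genuinely different route from the paper's. The paper proves this proposition by differentiating the raw-moment representations of the cumulants by the product rule (e.g.\ $C^{(4)} = E[Q^4]-4E[Q^3]E[Q]+12E[Q^2]E[Q]^2-6E[Q]^4-3E[Q^2]^2$, so $\dot{C}^{(4)} = \dot{E}[Q^4]-4\dot{E}[Q^3]E[Q]-4E[Q^3]\dot{E}[Q]+\cdots$) and then leaves the substitution of the raw-moment ODEs from the preceding Corollary implicit; you instead feed the centered, time-dependent test functions $(x-m(t))^k$ into the forward equation, so the mean-drift contribution enters once as $-k\dot{m}E[\overline{Q}^{\,k-1}]$ and the cancellations at $k=3,4$ happen structurally rather than by brute-force bookkeeping. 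This is a legitimate and in fact cleaner approach: the extra $E[\partial_t f]$ term is justified by expanding $(x-m(t))^k$ into monomials with time-dependent coefficients and combining Proposition~\ref{FFE} with the product rule. Your $k=1,2,3$ computations are correct and reproduce the stated formulas, including the cancellation $3\mathrm{Var}[Q]\dot{m} - 3\dot{m}E[\overline{Q}^{\,2}] = 0$ at $k=3$.

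The gap is in the $k=4$ step, precisely the bookkeeping you defer as ``the hard part.'' Executing your own plan, the leftover of $-6\mathrm{Var}[Q]\,\dot{\mathrm{Var}}[Q]$ after cancelling $6\mathrm{Var}[Q]\left(E[\psi_{\alpha}]+E[\psi_{\delta}]\right)$ is
\begin{equation*}
  -12\,\mathrm{Var}[Q]\left(\mathrm{Cov}[Q,\psi_{\alpha}]-\mathrm{Cov}[Q,\psi_{\delta}]\right),
\end{equation*}
whereas the statement prints $+12\,\mathrm{Var}[Q]\left(\mathrm{Cov}[Q,\psi_{\alpha}]+\mathrm{Cov}[Q,\psi_{\beta}]\right)$: the $\psi_{\alpha}$ part has the opposite sign. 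So your assertion that this leftover ``provides the explicit correction recorded in the statement'' is unverified and, for the statement as printed, false. A concrete check isolates the sign: for the Yule process $\psi_{\alpha}=\lambda Q$, $\psi_{\delta}=0$, $Q(0)=1$, the law of $Q(t)$ is geometric with $p=e^{-\lambda t}$; writing $u=e^{\lambda t}$ and $\mu_{k}=E[\overline{Q}^{\,k}]$, one has $\mathrm{Cov}[\overline{Q}^{\,k},\psi_{\alpha}]=\lambda\mu_{k+1}$, with $\mu_{2}=u^{2}-u$, $\mu_{3}=2u^{3}-3u^{2}+u$, $\mu_{4}=9u^{4}-18u^{3}+10u^{2}-u$, and fourth cumulant $\kappa_{4}=6u^{4}-12u^{3}+7u^{2}-u$. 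The minus-sign formula gives $\lambda\left(u+4\mu_{2}+6\mu_{3}+4\mu_{4}-12\mu_{2}^{2}\right)=\lambda\left(24u^{4}-36u^{3}+14u^{2}-u\right)=\dot{\kappa}_{4}$ exactly, while the plus-sign version of the statement does not. In short: your method is sound and, carried through, actually exposes a sign error in the paper's stated $\dot{C}^{(4)}$ (which the paper's own proof never reaches, since it stops at the differentiation identities); but as written your proposal papers over exactly the step where the discrepancy lives, so it does not establish the statement as printed.
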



\begin{proof}
  The mean is same as the moment approximation of Theorem \ref{FFE},
  however, the variance, third cumulant, and the fourth cumulant
  moment expressions can be derived from the following equalities and
  the moments above.

\begin{eqnarray*}
\updot{\mathrm{Var}}[Q]  &=& \frac{d}{dt} \left( E[Q^2] -  E[Q]^2 \right) \\
&=& \updot{E}[Q^2] - 2 \cdot E[Q] \cdot \updot{E}[Q]  \\
\updot{C^{(3)}}[Q] &=&  \frac{d}{dt} E[Q^3] - 3 \frac{d}{dt} ( E[Q^2] \cdot E[Q] ) + 2 \cdot \frac{d}{dt} E[Q]^3    \\
&=&\updot{E}[Q^3] - 3 \cdot \updot{E}[Q^2] \cdot E[Q] - 3 \cdot E[Q^2] \cdot \updot{E}[Q]  + 6 \cdot \updot{E}[Q] \cdot E[Q]^2    \\ 
\updot{C^{(4)}}[Q] &=& \frac{d}{dt} \left( E[Q^4] - 4 \cdot E[Q^3] \cdot E[Q] + 12 \cdot E[Q^2] \cdot E[Q]^2 - 6 \cdot E[Q]^4 - 3 \cdot E[Q^2]^2 \right) \\
&=& \updot{E}[Q^4] - 4 \cdot \updot{E}[Q^3] \cdot E[Q] - 4 \cdot E[Q^3] \cdot \updot{E}[Q] + 12 \cdot \updot{E}[Q^2] \cdot E[Q]^2 \\
&&+ 24 \cdot E[Q^2] \cdot \updot{E}[Q] \cdot E[Q]  - 24 \cdot \updot{E}[Q] \cdot E[Q]^3 - 6 \cdot \updot{E}[Q^2] \cdot E[Q^2]  
\end{eqnarray*}
\end{proof}

Thus, using the functional forward equations, it seems that we might
be able to calculate the moments of the birth-death process directly.
However, this is quite complicated unless the rate functions
$\psi_{\alpha}(t,Q)$ and $\psi_{\beta}(t,Q)$ are constant, linear, or
some other very special case. One way to see this complication is to
make $\psi_{\beta}(t,Q)$ quadratic. Thus, it is easily seen that the
differential equation for the mean of the birth death process depends
on the second moment of the process, which is unknown. When the
moments of lower order either depend on higher order moments or
functions of higher order moment, this system of equations is said to
be \emph{not closed}. Thus, closure approximations were developed to
address this complication by approximating the higher order moment
terms with functions of the lower order moments. However, one
complication is that typically closure approximations have no
theoretical guarantees for performance and are quite heuristic. In the
next section, we describe a new closure method based on
Poisson-Charlier polynomials and Sobolev space estimates that not only
has theoretical guareentees for approximating the distribution and its
moments, but also has good numerical performance.


\section{Poisson-Charlier Expansions}

In this section we describe our method for approximating the dynamics
of one-dimensional Markovian birth-death processes. We first give an
outline and motivation for the method and how it is extremely useful
in our context.

\subsection{Motivation}

Our method expands the state probabilities of the birth-death Markov
process in terms of Poisson-Charlier polynomials and the Poisson
reference distribution.  This means that we project the actual state
probabilities onto a finite set of Poisson-Charlier polynomials.  We
then use this approximation to derive estimates for the moment of the
Markov process, by using the functional forward equations.  One
important result is that we can exploit various properties of the
Poisson distribution to derive \emph{explicit} and \emph{closed-form}
approximations for various Markovian birth-death processes with
explicit and rigorous error bounds on the expansion or truncation
error. We know from the theory of Hilbert spaces and the fact that
probabilities are bounded that the transition probabilities of our
queueing process can be written in terms of an infinite
Poisson-Charlier polynomial expansion,
\begin{equation}
\mathbb{P}( Q(t) = x ) = \omega(x) \sum^{\infty}_{j=0} c_j(t) \cdot C_{j}^a(x),
\end{equation}
where the $C_j(a,x)$ are the Poisson-Charlier polynomials with
parameter $a$ and $\omega(x)$ is the Poisson distribution weight
function. Now if one truncates the distribution at a finite number of
terms, then one has the following approximation for the value of the
state probabilities of the Markovian birth-death process as
\begin{equation}
\mathbb{P}^{(N)}( Q(t) = x ) = \omega(x) \sum^{N}_{j=0} c_j(t) \cdot C_{j}^a(x).
\end{equation}
This introduces the following error for the state probabilities
when approximated by a truncated expansion
\begin{equation}
\mathrm{Error} \equiv E^{(N)}_x = \mathbb{P}^{(N)}( Q(t) = x )-\mathbb{P}( Q(t) = x )  =  \omega(x)  \sum^{\infty}_{j=N+1} c_j(t) \cdot C_{j}^a(x).
\end{equation}
It is obvious that as we add more terms that $\lim_{N \to\infty}
E^{(N)}_x = 0 $ for each value of $ x \in \mathbf{Z}_+$, however, the
details of this convergence are not trivial.

In addition to the state probabilities, it is also possible to derive
approximations for the moments of the stochastic process.  Using the
state probabilities, we have the following expression for the $m^{th}$
moment of the birth-death process in terms of Poisson-Charlier
polynomials
\begin{equation}
E[Q^m(t) ] = \sum^{\infty}_{x = 0} x^m \cdot \omega(x) \sum^{\infty}_{j=0} c_j(t) \cdot C_{j}^a(x).
\end{equation}
Moreover, by truncating the Poisson-Charlier expansion at N terms, we
have the following approximation for the $m^{th}$ moment of the
birth-death process as
\begin{equation}
E^{(N)}[Q^m(t) ] = \sum^{\infty}_{x = 0} x^m \cdot  \omega(x) \sum^{N}_{j=0} c_j(t) \cdot C_{j}^a(x).
\end{equation}
Thus, like in the state probability case, we can substract the two and
get the error induced by truncating the two expressions.

\subsection{Weighted Sobolev sequence spaces}
\label{subsec:sobolev}

In this section we put forward a theory for convergence of orthogonal
expansions in terms of Charlier polynomials and associated Poisson
functions. Due to the discreteness of the underlying Poisson measure
the theory requires a special hierarchy of discrtete Sobolev spaces
which is devloped in \S\ref{subsec:sobolev}. Another important reason
that the Sobolev spaces are needed is that polynomials or (moments)
are not integrable on unbounded domains without a sufficiently fast
decaying measure.  Moreover, the type of convergence we are interested
in is detailed in \S\ref{subsec:convergence} and forms the basis for
our later developments. The material in here draws on some earlier
accounts \cite{master_charlier_th,master_charlier_app}, but several
salient and novel extensions are proposed to deal with our new
problems.

First, since in the current work we aim for a consistent moment
closure rather than a convergent spectral method for the probability
density itself, the correct Hilbert spaces to work with are not the
same as \cite{master_charlier_th,master_charlier_app}. More
specifically, the targeted densities have to belong to a certain more
restrictive class of weighted Hilbert spaces than what is required for
spectral approximations to the densities themselves. Secondly, we
present a general weak error bound of our method which predicts the
weak convergence of arbitrary functionals in a certain class. This
convergence is extremely relevant for approximating the moments of the
Markov process since we want to be confident that our method also
converges for moments based on our transition probability
approximations.

For real-valued functions over the non-negative integers $\Intdom_{+}
= \{0,1,2,\ldots\}$ we associate the usual discrete Euclidean inner
product,
\begin{align}
  (p,q) &\equiv  \sum_{x \ge 0} p(x)q(x),
\intertext{and we define the $\Lspace^{2}(\Intdom_{+})$-sequence space
  accordingly,}
  \| q \|_{\Lspace^{2}(\Intdom_{+})}^{2} &\equiv
  (q,q), \\
  \Lspace^{2}(\Intdom_{+}) &= \left\{ q:\Intdom_{+} 
    \to \Realdom; \, \| q \|_{\Lspace^{2}(\Intdom_{+})} 
    < \infty \right\}.
\end{align}
Now we introduce the important class of discrete Sobolev sequence
spaces that are necessary for our analysis
\begin{align}
  \label{eq:sobspace}
  \Hspace^{m}(\Intdom_{+}) &= \left\{ q:\Intdom_{+} 
    \to \Realdom; \, 
    \sqrt{\ffactrl{x}{k}} \cdot q(x) \in \Lspace^{2}(\Intdom_{+})
    \mbox{ for } 0 \le k \le m \right\}, \\
  \label{eq:sobnorm}
  \| q \|_{\Hspace^{m}(\Intdom_{+})}^{2} &\equiv \sum_{k = 0}^{m}
    a^{-k} \| \sqrt{\ffactrl{x}{k}} \cdot q(x) 
  \|_{\Lspace^{2}(\Intdom_{+})}^{2},
\end{align}
where the falling factorial power is defiend by $\ffactrl{x}{m} =
x!/(x-m)!  = \prod_{i = 0}^{m-1} (x-i)$ and where the free parameter
$a \in \Realdom_{+}$.

Define as usual the Poisson weight function by
\begin{align}
  w(x) = \frac{a^{x}}{x!} \cdot e^{-a}.
\end{align}
We need to consider two related \emph{weighted} inner products. Define
$(p,q)_{w} := (p,q w)$ and similarly $(p,q)_{w^{-1}} := (p,q w^{-1})$,
where in the latter case clearly some regularity of $p$ and $q$ is
understood. A useful observation is that by the Cauchy-Schwartz
inequality we have that,
\begin{align}
  \label{eq:CSskew}
  (p,q) &= (p w^{1/2},q w^{-1/2}) \\
& \le \|p\|_{\Lspace^{2}(w; \, \Intdom_{+})}
  \|q\|_{\Lspace^{2}(w^{-1}; \, \Intdom_{+})},
\end{align}
again provided that $p$ and $q$ are measurable in the respective
weighted $\Lspace^{2}$-spaces which we denote by $\Lspace^{2}(w; \,
\Intdom_{+})$ and $\Lspace^{2}(w^{-1}; \, \Intdom_{+})$, respectively.

From these weighted $\Lspace^{2}$-spaces we readily define two
hierarchies of weighted Sobolev sequence spaces $\Hspace^{m}(w; \,
\Intdom_{+})$ and $\Hspace^{m}(w^{-1}; \, \Intdom_{+})$ by simply
following the prescription in \eqref{eq:sobspace}--\eqref{eq:sobnorm}.  The following is a consequence of these definitions and is an important property of the Sobolev spaces $\Hspace^{m}(w; \,
  \Intdom_{+})$ and $\Hspace^{m}(w^{-1}; \, \Intdom_{+})$ that will be used throughout the rest of the paper.
\begin{proposition}
  \label{prop:isometry}
  The map $p \mapsto w p$ is an isometry between $\Hspace^{m}(w; \,
  \Intdom_{+})$ and $\Hspace^{m}(w^{-1}; \, \Intdom_{+})$.
\end{proposition}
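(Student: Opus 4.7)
The plan is to verify directly that the map $T:p \mapsto wp$ preserves the weighted Sobolev norm, and that it is a bijection with inverse $T^{-1}:q \mapsto w^{-1}q$. Since both norms have the same layered structure, the proof reduces to a single pointwise identity applied term-by-term in the defining sum \eqref{eq:sobnorm}.

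First I would unpack the two norms explicitly. By the prescription following \eqref{eq:sobnorm} applied with weight $w$ and $w^{-1}$ respectively,
\begin{align*}
  \|p\|_{\Hspace^{m}(w; \, \Intdom_{+})}^{2}
  &= \sum_{k=0}^{m} a^{-k} \sum_{x \ge 0} \ffactrl{x}{k}\, p(x)^{2}\, w(x), \\
  \|q\|_{\Hspace^{m}(w^{-1}; \, \Intdom_{+})}^{2}
  &= \sum_{k=0}^{m} a^{-k} \sum_{x \ge 0} \ffactrl{x}{k}\, q(x)^{2}\, w(x)^{-1}.
\end{align*}
Substituting $q = wp$ into the second expression, the pointwise identity $(wp)^{2} w^{-1} = w \cdot p^{2}$ collapses each of the $k$-th terms onto the corresponding term in the first expression, giving
\begin{align*}
  \|wp\|_{\Hspace^{m}(w^{-1}; \, \Intdom_{+})}^{2}
  = \sum_{k=0}^{m} a^{-k} \sum_{x \ge 0} \ffactrl{x}{k}\, p(x)^{2}\, w(x)
  = \|p\|_{\Hspace^{m}(w; \, \Intdom_{+})}^{2}.
\end{align*}

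Next I would argue that $T$ is a bijection of the two spaces. The identity above shows $p \in \Hspace^{m}(w; \, \Intdom_{+})$ implies $wp \in \Hspace^{m}(w^{-1}; \, \Intdom_{+})$, and the symmetric calculation (replacing $p$ by $w^{-1}q$) shows that $q \in \Hspace^{m}(w^{-1}; \, \Intdom_{+})$ implies $w^{-1}q \in \Hspace^{m}(w; \, \Intdom_{+})$ with the same norm. Since $w(x) > 0$ for every $x \in \Intdom_{+}$, the pointwise multiplication by $w$ is clearly invertible on functions, so $T$ and $T^{-1}$ are inverse bijections, and each is norm-preserving.

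There is no real obstacle here; the claim is essentially a restatement of the fact that the weights $w$ and $w^{-1}$ are reciprocal, and the falling-factorial factors $\ffactrl{x}{k}$ are untouched by the substitution. The only care to take is to note that linearity of $T$ together with the norm identity gives isometry as a linear map (so it also preserves the induced inner product via polarization), and to make sure that strict positivity of $w$ is used when asserting that $T^{-1}$ maps back into $\Hspace^{m}(w; \, \Intdom_{+})$ without any issues at boundary points $x=0,1,\ldots,m-1$ where $\ffactrl{x}{k}$ may vanish.
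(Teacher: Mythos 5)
Your proof is correct and follows essentially the same route as the paper's: substitute $q = wp$ into the defining sum \eqref{eq:sobnorm} and use the pointwise cancellation $(wp)^{2}w^{-1} = p^{2}w$ to identify the two norms term by term. Your additional remarks on bijectivity of the map and the inverse $q \mapsto w^{-1}q$ are a harmless (and slightly more complete) elaboration of what the paper leaves implicit.
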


\begin{proof}
  For an arbitrary $p \in \Hspace^{m}(w; \,
  \Intdom_{+})$, put $q = wp$. Then by \eqref{eq:sobnorm},
  \begin{align*}
    \|q\|_{\Hspace^{m}(w^{-1}; \,
      \Intdom_{+})}^{2} &= 
    \sum_{k = 0}^{m} a^{-k} \sum_{x = 0}^{\infty} \ffactrl{x}{k} \cdot q(x)^{2} w(x)^{-1} \\ &= 
    \sum_{k = 0}^{m} a^{-k} \sum_{x = 0}^{\infty} \ffactrl{x}{k} \cdot p(x)^{2} w(x) \\ &= 
    \|p\|_{\Hspace^{m}(w; \,
      \Intdom_{+})}^{2}
  \end{align*}
  as claimed.
\end{proof}

\subsection{Convergence estimates}
\label{subsec:convergence}

For a given Poisson parameter $a > 0$, and keeping in mind that
different normalizations are sometimes used, we will let
$C_n^a(x)$ denote the \emph{normalized} $n$th degree Poisson-Charlier
polynomial \cite{Askeypols}. These polynomials are orthonormal with
respect to the $\Lspace_{w}^{2}$-product and hence we may define
$\IntCharp{N}$ as the orthogonal projection onto the space of
polynomials $\Charp{N}$ of degree $\le N$;
\begin{align}
  \label{eq:proj1}
  \IntCharp{N} p(x) &\equiv \sum_{n = 0}^{N} c_{n} C_n^a(x), \\
  \label{eq:proj2}
  c_{n} &= (p(x),C_n^a(x))_{w}.
\end{align}

The first few normalized Poisson-Charlier polynomials can be generated
according to
\begin{align}
  \nonumber
  C_{0}^a(x) &\equiv 1, \\
  \nonumber
  C_{1}^a(x) &\equiv \frac{a-x}{\sqrt{a}}, \\
  \label{eq:charrec}
  C_{n+1}^a(x) &= \frac{n+a-x}{\sqrt{a(n+1)}} C_n^a(x)-
  \sqrt{\frac{n}{n+1}} C_{n-1}^a(x).
\end{align}





Although not as well studied as the convergence properties of the
Hermite polynomials or the Laguerre polynomials, the convergence
properties of the Poisson-Charlier projection
\eqref{eq:proj1}--\eqref{eq:proj2} has been investigated in
\cite{master_charlier_th}.
\begin{theorem}
  \label{th:Cinterp1}
  For any nonnegative integers $k$ and $m$, $k \le m$, there exists a
  positive constant $C$ depending only on $m$ and $a$ such that, for
  any function $p \in \Hspace^{m}(w; \, \Intdom_{+})$, the following
  estimate holds
\begin{align}
  \label{eq:Cinterp1}
  \| \IntCharp{N-1} p-p \|_{\Hspace^{k}(w; \, \Intdom_{+})} &\le
  C(a/N)^{m/2} (1 \vee N/a)^{k/2} \| p \|_{\Hspace^{m}(w; \, \Intdom_{+})}.
\end{align}
  If $a \ge 1$ is assumed, then $C$ depends only on $m$.
\end{theorem}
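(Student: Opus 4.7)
The plan is to reduce the bound to a coefficient-space inequality via the orthonormal expansion $p = \sum_n c_n C_n^a$, and then to split according to whether $N \le a$ or $N > a$. First, the three-term recurrence \eqref{eq:charrec} yields by a short algebraic check (easily verified for low $n$) the fundamental lowering relation $\Delta C_n^a = -\sqrt{n/a}\, C_{n-1}^a$, where $\Delta f(x) := f(x+1) - f(x)$. Iterating and invoking Parseval in $\Lspace^2(w;\Intdom_+)$ gives
\begin{equation*}
\|\Delta^j p\|_{\Lspace^2(w;\Intdom_+)}^2 = \sum_{n \ge j} \frac{\ffactrl{n}{j}}{a^j} c_n^2.
\end{equation*}

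Second, the elementary identity $a^{-k} \ffactrl{x}{k} w(x) = w(x-k)$ and the reindexing $y = x-k$ recast the Sobolev norm as a sum of shift norms,
\begin{equation*}
\|p\|_{\Hspace^m(w;\Intdom_+)}^2 = \sum_{k=0}^m \|E_1^k p\|_{\Lspace^2(w;\Intdom_+)}^2, \qquad E_1 f := f(\cdot + 1).
\end{equation*}
Since $E_1 = I + \Delta$ and $\Delta = E_1 - I$ are mutually expressible as finite binomial combinations of one another, two applications of the triangle inequality together with $(\sum_{j=0}^k a_j)^2 \le (k+1) \sum_{j=0}^k a_j^2$ produce the $m$-dependent-only two-sided equivalence
\begin{equation*}
\|p\|_{\Hspace^m(w;\Intdom_+)}^2 \asymp_m \sum_{j=0}^m \|\Delta^j p\|_{\Lspace^2(w;\Intdom_+)}^2 = \sum_n \beta_{n,m}\, c_n^2, \qquad \beta_{n,m} := \sum_{j=0}^m \frac{\ffactrl{n}{j}}{a^j}.
\end{equation*}

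Applying the same characterization to $p - \IntCharp{N-1} p = \sum_{n \ge N} c_n C_n^a$, the theorem reduces to the pointwise bound $\beta_{n,k} \le C (a/N)^m (1 \vee N/a)^k \beta_{n,m}$ for every $n \ge N$. Since $\beta_{n,k} \le (k+1) \max(1, (n/a)^k)$ and, for $n \ge 2m$, $\beta_{n,m} \gtrsim_m \max(1, (n/a)^m)$, I would split into two cases: when $N \le a$ one has $(a/N)^m \ge 1$, which trivially absorbs the constant together with $\beta_{n,k}/\beta_{n,m} \lesssim_m 1$; when $N > a$, every $n \ge N$ already exceeds $a$, so $\beta_{n,k}/\beta_{n,m} \lesssim_m (a/n)^{m-k} \le (a/N)^{m-k} = (a/N)^m (N/a)^k$, as required. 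The finitely many small-$n$ terms ($n < 2m$) are absorbed into the $m$-dependent constant.

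The main obstacle is the norm equivalence in the second step: the Sobolev norm is not diagonal in the Charlier basis because the shift operator $E_1$ couples adjacent polynomials, so obtaining an equivalence with constants depending only on $m$---and in particular not deteriorating as $a \to \infty$ under the assumption $a \ge 1$---requires careful bookkeeping of the binomial expansions of $E_1^k$ and $\Delta^j$ and their compositions with Parseval. Once this equivalence is in place, the coefficient-space estimate is elementary arithmetic, and the case separation in $N$ directly yields the stated form of the upper bound.
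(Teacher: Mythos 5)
Your proposal is correct, and it is in substance the proof the paper points to: the paper's own ``proof'' of Theorem~\ref{th:Cinterp1} is only a citation to Theorem~2.11 of \cite{master_charlier_th}, and the argument there rests on exactly your two pillars --- the lowering relation $\Delta C_n^a = -\sqrt{n/a}\,C_{n-1}^a$ combined with Parseval, and the identity $a^{-k}\ffactrl{x}{k}\,w(x) = w(x-k)$, which turns the weighted Sobolev norm into a sum of shifted $\Lspace^{2}(w;\,\Intdom_{+})$-norms, equivalent with $m$-only constants to difference-quotient norms via the binomial expansions of $E_1^k=(I+\Delta)^k$ and $\Delta^j=(E_1-I)^j$. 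Your coefficient-space reduction and the case split $N\le a$ versus $N>a$ then deliver the stated rate; in fact the endgame is even simpler than you wrote, since $\beta_{n,k}\le\beta_{n,m}$ holds trivially ($k\le m$, all terms nonnegative), which settles the $N\le a$ case with constant $1$.

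Three points of rigor to tighten. First, the lowering relation should be proved properly --- by induction on the recurrence \eqref{eq:charrec} or from the Charlier generating function --- rather than verified for low $n$; checking $n=1,2$ is not a proof, though the identity is standard and true. Second, applying $\Delta^j$ termwise to the infinite expansion and invoking Parseval needs a limiting justification; this is available because polynomials are dense in $\Lspace^{2}(w;\,\Intdom_{+})$ (the Poisson moment problem is determinate), so one can pass from partial sums. Third, in absorbing the finitely many terms $N\le n<2m$ when $N>a$, the constant you need is $(N/a)^{m-k}\beta_{n,k}/\beta_{n,m}\le (2m/a)^{m}$, which depends on $a$ when $a<1$ --- permitted, since the theorem allows $C=C(m,a)$ --- and is bounded by $(2m)^{m}$ when $a\ge 1$, which is precisely what is required for the final claim that $C$ then depends on $m$ alone; your phrase ``$m$-dependent constant'' glosses this but the case analysis you set up does support it.
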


\begin{proof}
See Theorem 2.11 in  \cite{master_charlier_th}.
\end{proof}

We shall need to consider the corresponding approximation results in
terms of \emph{Poisson-Charlier functions}. These are defined for $n =
0,1,\ldots$ by $\tilde{C}_{n}(a,x) \equiv C_n^a(x) \cdot w(x)$ and
spans the space $\Chark{N} \equiv \{ p(x) = q(x) \cdot w(x); \, q \in
\Charp{N} \}$. Using orthonormality under the inner product
$(\cdot,\cdot)_{w^{-1}}$ we define $\IntChark{N}$ to denote the
orthogonal projection on $\Chark{N}$. Thus, we have the following
relation between the two projection operators of which $\IntChark{N}
p$ will be the most important for our approximations since it is
related to the Poisson-Charlier functions.
\begin{proposition}
  \label{prop:intcharrel}
  \begin{align}
    \IntChark{N} p &= w(x) \IntCharp{N} 
    \left[ w(x)^{-1} \cdot p(x) \right].
  \end{align}
\end{proposition}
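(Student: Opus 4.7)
The plan is to prove the identity by expanding both sides in terms of their respective orthonormal bases and showing that the expansion coefficients agree. Recall that $\{C_n^a\}_{n\ge 0}$ is orthonormal in $(\cdot,\cdot)_w$ and $\{\tilde{C}_n^a\} = \{w \cdot C_n^a\}_{n\ge 0}$ is orthonormal in $(\cdot,\cdot)_{w^{-1}}$ (this can be checked directly, or deduced from the isometry of Proposition~\ref{prop:isometry} applied to the polynomial spaces). Hence the orthogonal projection on $\Chark{N}$ takes the explicit form
\begin{align*}
\IntChark{N} p(x) = \sum_{n=0}^{N} \tilde{c}_n \, \tilde{C}_n^a(x), \qquad \tilde{c}_n = (p, \tilde{C}_n^a)_{w^{-1}},
\end{align*}
while $\IntCharp{N}[w^{-1} p] = \sum_{n=0}^{N} c_n C_n^a$ with $c_n = (w^{-1} p, C_n^a)_w$ from \eqref{eq:proj1}--\eqref{eq:proj2}.

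The key step is to observe that the weight cancels in both coefficient formulas, so that they reduce to the same unweighted sum. Using $\tilde{C}_n^a = C_n^a \cdot w$,
\begin{align*}
\tilde{c}_n = \sum_{x \ge 0} p(x) \, C_n^a(x) \, w(x) \cdot w(x)^{-1} = \sum_{x \ge 0} p(x) \, C_n^a(x),
\end{align*}
and on the other hand
\begin{align*}
c_n = \sum_{x \ge 0} w(x)^{-1} p(x) \, C_n^a(x) \, w(x) = \sum_{x \ge 0} p(x) \, C_n^a(x),
\end{align*}
so $c_n = \tilde{c}_n$ for each $n$. Multiplying $\IntCharp{N}[w^{-1} p]$ by $w$ then yields $\sum_{n=0}^{N} c_n \, C_n^a(x) \, w(x) = \sum_{n=0}^{N} \tilde{c}_n \, \tilde{C}_n^a(x) = \IntChark{N} p(x)$, which is the claimed identity.

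The only subtlety, and the step most in need of care, is ensuring that all the inner products above are well defined, i.e.\ that $w^{-1} p \in \Lspace^{2}(w;\, \Intdom_{+})$ whenever $p \in \Lspace^{2}(w^{-1};\, \Intdom_{+})$, and that the pointwise sums $\sum_x p(x) C_n^a(x)$ converge absolutely. Both follow directly from the isometry $p \mapsto w p$ between $\Lspace^{2}(w;\, \Intdom_{+})$ and $\Lspace^{2}(w^{-1};\, \Intdom_{+})$ (Proposition~\ref{prop:isometry} at $m=0$) together with the Cauchy--Schwartz bound \eqref{eq:CSskew}; no further obstacle arises.
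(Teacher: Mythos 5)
Your proof is correct and follows essentially the same route as the paper's: both expand $\IntChark{N}p$ in the orthonormal basis $\{\tilde{C}_n^a\}$ under $(\cdot,\cdot)_{w^{-1}}$ and observe that the weight cancels so the coefficients coincide with those of $\IntCharp{N}[w^{-1}p]$, the paper doing this ``by inspection'' where you spell it out. Your added remarks on well-definedness of the inner products via Proposition~\ref{prop:isometry} and the bound \eqref{eq:CSskew} are a harmless (and welcome) elaboration, not a different argument.
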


\begin{proof}
  For $p \in \Lspace^{2}(w^{-1}; \, \Intdom_{+})$ by orthonormality we
  have the Fourier series
  \begin{align*}
    \IntChark{N} p &= \sum_{n = 0}^{N-1} (p,\tilde{C}^a_{n})_{w^{-1}} \tilde{C}^a_{n}. \\
    \intertext{Expanding we get}
    \IntChark{N} p &= w(x) \sum_{n = 0}^{N-1} (w^{-1} p,C^a_{n})_{w} C^a_{n} \\
	&= w(x) \IntCharp{N} \left[ w(x)^{-1} \cdot p(x) \right]
  \end{align*}
  by inspection.
\end{proof}

The natural setting for measuring convergence is now the hierarchy of
inversely weighted Sobolev-spaces $\Hspace^{m}(w^{-1}; \,
\Intdom_{+})$. Theorem~\ref{th:Cinterp1} governs the case of
convergence in the weighted $\Lspace^{2}$-space. For sufficiently
regular functions we may use the representation in
Proposition~\ref{prop:intcharrel} and the isometry in
Proposition~\ref{prop:isometry} to arrive at the following result
which is crucial to the approach taken in this paper.
\begin{theorem}[\textit{Poisson-Charlier expansion}]
  \label{th:Cinterp2}
  For any nonnegative integers $k$ and $m$, $k \le m$, there exists a
  positive constant $C$ depending only on $m$ and $a$ such that, for
  any function $p \in \Hspace^{m}(w^{-1}; \, \Intdom_{+})$, the
  following estimate holds
\begin{align}
  \label{eq:Cinterp2}
  \| \IntChark{N-1} p-p \|_{\Hspace^{k}(w^{-1}; \, \Intdom_{+})} &\le
  C(a/N)^{m/2} (1 \vee N/a)^{k/2} \| p \|_{\Hspace^{m}(w^{-1}; \, \Intdom_{+})}.
\end{align}
  Again, if $a \ge 1$ is assumed, then $C$ depends only on $m$.
\end{theorem}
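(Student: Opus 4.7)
The plan is to reduce Theorem~\ref{th:Cinterp2} to the already-stated Theorem~\ref{th:Cinterp1} by chaining the two structural results that precede it: the identification of $\IntChark{N}$ with a conjugated version of $\IntCharp{N}$ in Proposition~\ref{prop:intcharrel}, and the isometry $p \mapsto w p$ between $\Hspace^{m}(w;\Intdom_{+})$ and $\Hspace^{m}(w^{-1};\Intdom_{+})$ in Proposition~\ref{prop:isometry}. The whole proof should consist of moving the weight $w$ in and out of the norm in a disciplined way, with no need to revisit the underlying polynomial estimates.

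First I would set $q(x) \equiv w(x)^{-1} p(x)$ for the given $p \in \Hspace^{m}(w^{-1};\Intdom_{+})$. By Proposition~\ref{prop:isometry} applied in the reverse direction, $q$ lies in $\Hspace^{m}(w;\Intdom_{+})$ and
\begin{equation*}
\|q\|_{\Hspace^{m}(w;\Intdom_{+})} = \|p\|_{\Hspace^{m}(w^{-1};\Intdom_{+})}.
\end{equation*}
Next, by Proposition~\ref{prop:intcharrel},
\begin{equation*}
\IntChark{N-1} p - p = w \cdot \IntCharp{N-1} q - w \cdot q = w \cdot \bigl(\IntCharp{N-1} q - q\bigr),
\end{equation*}
so another application of the isometry (in the forward direction) gives
\begin{equation*}
\|\IntChark{N-1} p - p\|_{\Hspace^{k}(w^{-1};\Intdom_{+})} = \|\IntCharp{N-1} q - q\|_{\Hspace^{k}(w;\Intdom_{+})}.
\end{equation*}

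At this point Theorem~\ref{th:Cinterp1} applies directly to $q$, yielding
\begin{equation*}
\|\IntCharp{N-1} q - q\|_{\Hspace^{k}(w;\Intdom_{+})} \le C(a/N)^{m/2} (1 \vee N/a)^{k/2} \|q\|_{\Hspace^{m}(w;\Intdom_{+})},
\end{equation*}
with the same constant $C = C(m,a)$ (or $C = C(m)$ when $a \ge 1$). Substituting the two isometry identities into this inequality gives exactly \eqref{eq:Cinterp2}.

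I do not expect a real obstacle here: the only thing to be careful about is that the definition of the norm \eqref{eq:sobnorm} is written for the bare Euclidean inner product, so the isometry in Proposition~\ref{prop:isometry} genuinely transports the full Sobolev scale and not merely the $\Lspace^{2}$-level, which is what lets $k$ and $m$ pass through unchanged. If anything is subtle it is just confirming that $q = w^{-1} p$ is well defined pointwise (which is automatic since $w(x) > 0$ for every $x \in \Intdom_{+}$) and lies in the appropriate space, but both facts are immediate from the isometry statement.
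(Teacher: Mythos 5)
Your proposal is correct and follows exactly the paper's own proof: set $q = w^{-1}p$, use Proposition~\ref{prop:intcharrel} to rewrite $\IntChark{N-1}p - p$ as $w\bigl(\IntCharp{N-1}q - q\bigr)$, transfer norms via the isometry of Proposition~\ref{prop:isometry}, and invoke Theorem~\ref{th:Cinterp1}. Your version is in fact slightly cleaner on the projection indexing, where the paper's displayed computation writes $\IntCharp{N}$ inconsistently with the statement's $N-1$.
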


\begin{proof}
  By Proposition~\ref{prop:isometry}, we know that $p \mapsto w^{-1}p$ is an
  isometry between the Sobolev spaces $\Hspace^{k}(w^{-1}; \, \Intdom_{+})$ and
  $\Hspace^{k}(w; \, \Intdom_{+})$. Thus, we can move back and forth between the spaces keeping in mind the different weighting functions.  For some $p \in
  \Hspace^{m}(w^{-1}; \, \Intdom_{+})$, put $q = w^{-1}p$. Then
  \begin{align*}
    \| \IntChark{N-1} p-p \|_{\Hspace^{k}(w^{-1}; \, \Intdom_{+})}^{2} &=
    \| w \IntCharp{N} 
    \left[ w^{-1} \cdot p \right]-p \|_{\Hspace^{k}(w^{-1}; \, \Intdom_{+})}^{2} \\
    &= \| \IntCharp{N} 
    \left[ w^{-1} \cdot p \right]-w^{-1}p \|_{\Hspace^{k}(w; \, \Intdom_{+})}^{2} \\
    &= \| \IntCharp{N} q-q \|_{\Hspace^{k}(w; \, \Intdom_{+})}^{2},
  \end{align*}
  where Theorem~\ref{th:Cinterp1} clearly applies and yields
  \eqref{eq:Cinterp2} expressed in terms of the $\Hspace^{m}(w^{-1};
  \, \Intdom_{+})$-norm of $q$. Using the isometry in
  Proposition~\ref{prop:isometry} again finalizes the proof.
\end{proof}

\begin{example}
  \label{ex:expl}
  Consider a Poisson distribution $p(x) = \exp(-\lambda)
  \lambda^{x}/x!$ for some constant $\lambda > 0$. Write $p_{N} =
  \IntChark{N-1} p$ for $a \not = \lambda$. We compute explicitly
  \begin{align*}
    \| p \|^{2}_{\Hspace^{m}(w^{-1}; \, \Intdom_{+})} &= \sum_{k = 0}^{m} a^{-k} \sum_{x \ge 0} 
    \ffactrl{x}{k} \cdot p(x)^{2} w(x)^{-1} \\
    &= \sum_{k = 0}^{m} a^{-k} \sum_{x \ge 0} \frac{x!}{(x-k)!} \exp(-2\lambda)
    \frac{\lambda^{2x}}{(x!)^{2}} \cdot \exp(a) \frac{x!}{a^{x}} \\
    &= \sum_{k = 0}^{m} a^{-k} \sum_{x \ge 0} \frac{1}{(x-k)!} \exp(-2\lambda) \cdot
    \lambda^{2x} \cdot \exp(a) \frac{1}{a^{x}} \\
    &= \sum_{k = 0}^{m} a^{-k} \sum_{x \ge 0} \frac{1}{x!} \cdot \exp(a-2\lambda) \cdot 
    \left( \frac{\lambda^{2}}{a} \right)^{x+k} \\
    &= \sum_{k = 0}^{m} \left( \frac{\lambda}{a} \right)^{2k} \exp((a-\lambda)^{2}/a) \\
    &= \frac{1-(\lambda/a)^{2(m+1)}}{1-(\lambda/a)^{2}} \exp((a-\lambda)^{2}/a). 
  \end{align*}
  Inspired by this evaluation let us make the abstract assumption that
  \begin{align}
    \label{eq:regular}
    \| p \|_{\Hspace^{m}(w^{-1}; \, \Intdom_{+})} &\le C_a \theta_{a}^{m},
  \end{align}
  for some positive constants $( C_{a},\theta_{a})$ possibly depending
  on $a$,
  and refer to this class of distributions as being ``highly
  regular''. We see that for $p$ in this class and for a fix $k$ we
  obtain from Theorem~\ref{th:Cinterp2} that for $N$ large enough,
  \begin{align*}
    \| \IntChark{N-1} p-p \|_{\Hspace^{k}(w^{-1}; \, \Intdom_{+})} &\le
    C_{a} (a/N)^{(m-k)/2} \theta_{a}^{m}.
  \end{align*}
  By selecting $N$ large enough we may now let $m \to \infty$
  and get an error estimate that decreases faster than any inverse
  power of $N$. Hence in fact, for $p$ sufficiently regular in the
  sense of \eqref{eq:regular},
  \begin{align*}
    \| \IntChark{N-1} p-p \|_{\Hspace^{k}(w^{-1}; \, \Intdom_{+})} &\le
    \exp(-cN),
  \end{align*}
  for some $c > 0$ and any fixed value of $k$.
\end{example}

Let us write $p_{N} = \IntChark{N-1} p$ for $p$ some unknown but
sufficiently regular probability distribution. Assume that $X \sim p$
and let $X_{N} \sim p_{N}$ be considered an approximation to $X$. What
can then be said about weak errors of the form $Ef(X_{N})-Ef(X)$?
Firstly, note that $p_{N}$ is not guaranteed to be a probability
distribution; it need not hold true that $p_{N}(x) \ge 0$ for all $x
\in \Intdom_{+}$. However, $(1,p_{N}) =
(\tilde{C}_{0}^{a},p_{N})_{w^{-1}} = (\tilde{C}_{0}^{a},p)_{w^{-1}} =
(1,p)$, and hence the normalization is the correct one. In a practical
setting we can therefore adopt
\begin{align}
  \label{eq:weakN}
  Ef(X_{N}) &= \sum_{x \ge 0} f(x)p_{N}(x) = (f,p_{N})
\end{align}
as a \emph{definition} of the numerical expectation value. With these
considerations in mind we get the following result.

\begin{theorem}[\textit{A Priori Weak Error}]
  Let $p \in \Hspace^{k}(w^{-1}; \, \Intdom_{+})$, $f \in
  \Lspace^{2}(w; \, \Intdom_{+})$ and put $p_{N} = \IntChark{N-1} p$.
  Then
  \begin{align}
     |Ef(X_{N})-Ef(X)| &\le C (a/N)^{m/2} \|f\|_{\Lspace^{2}(w; \, \Intdom_{+})} 
  \| p \|_{\Hspace^{m}(w^{-1}; \, \Intdom_{+})}.
   \end{align}
\end{theorem}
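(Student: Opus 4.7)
The plan is to reduce the weak error to a weighted $\Lspace^{2}$-distance on $p-p_{N}$ and then invoke the projection estimate from Theorem~\ref{th:Cinterp2} at $k=0$. Since $Ef(X) = (f,p)$ and $Ef(X_{N}) = (f,p_{N})$ by the definition \eqref{eq:weakN}, bilinearity of the inner product gives immediately
\begin{align*}
  Ef(X_{N})-Ef(X) &= (f,p_{N}-p),
\end{align*}
so the task reduces to bounding the pairing $(f,p_{N}-p)$ by the two norms appearing on the right-hand side of the claim.

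The next step is to apply the weighted Cauchy-Schwartz inequality \eqref{eq:CSskew} with the substitutions $p \leftarrow f$ and $q \leftarrow p_{N}-p$. This is legitimate since $f \in \Lspace^{2}(w;\,\Intdom_{+})$ by hypothesis and $p_{N}-p \in \Lspace^{2}(w^{-1};\,\Intdom_{+})$ because both $p$ and its projection $p_{N}=\IntChark{N-1}p$ lie in that space (the projection by construction, $p$ by the hypothesis $p \in \Hspace^{m}(w^{-1};\,\Intdom_{+}) \subset \Lspace^{2}(w^{-1};\,\Intdom_{+})$). We thereby obtain
\begin{align*}
  |(f,p_{N}-p)| &\le \|f\|_{\Lspace^{2}(w;\,\Intdom_{+})}\,
  \|p_{N}-p\|_{\Lspace^{2}(w^{-1};\,\Intdom_{+})}.
\end{align*}

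Finally, Theorem~\ref{th:Cinterp2} applied with $k=0$ yields
\begin{align*}
  \|p_{N}-p\|_{\Lspace^{2}(w^{-1};\,\Intdom_{+})} &\le
  C(a/N)^{m/2}\|p\|_{\Hspace^{m}(w^{-1};\,\Intdom_{+})},
\end{align*}
with no $(1 \vee N/a)^{k/2}$ factor since $k=0$. Combining with the previous display delivers the advertised estimate.

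There is no genuine obstacle here: the theorem is essentially a duality corollary of the projection estimate in Theorem~\ref{th:Cinterp2}. The one conceptual point worth flagging is that the weighted Cauchy-Schwartz inequality \eqref{eq:CSskew} is exactly the device that decouples $f$ (tested in the $w$-weighted space, where moments and other polynomial-growth functionals naturally live) from $p_{N}-p$ (controlled in the $w^{-1}$-weighted space, where the Poisson-Charlier functions are orthonormal and spectral convergence is available). Once this duality between $\Lspace^{2}(w;\,\Intdom_{+})$ and $\Lspace^{2}(w^{-1};\,\Intdom_{+})$ is in place, the weak error bound follows without any further analytic input.
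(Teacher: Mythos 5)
Your proposal is correct and follows essentially the same route as the paper: reduce the weak error to the pairing $(f,p_{N}-p)$, split it via the weighted Cauchy--Schwartz inequality \eqref{eq:CSskew} into $\|f\|_{\Lspace^{2}(w;\,\Intdom_{+})}\,\|p_{N}-p\|_{\Lspace^{2}(w^{-1};\,\Intdom_{+})}$, and finish with Theorem~\ref{th:Cinterp2}. In fact your invocation of Theorem~\ref{th:Cinterp2} with $k=0$ is the cleaner choice: the paper says it applies the theorem with $k=1$, which would introduce a spurious $(1 \vee N/a)^{1/2}$ factor not present in the stated bound, so that appears to be a slip in the paper that your version avoids (you also correctly read the hypothesis as $p \in \Hspace^{m}(w^{-1};\,\Intdom_{+})$, consistent with the norm on the right-hand side).
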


\begin{proof}
  Using the projection as our surrogate distribution for the transition probabilities, we know that the difference between our approximation and the true expected value of the functional $f(x)$ is
 \begin{align*}
    Ef(X_{N})-Ef(X) &= (f,p_{N}-p) \\
&= (f \cdot w^{1/2}, (p_{N}-p) \cdot w^{-1/2}) \\
    &\le \|f\|_{\Lspace^{2}(w; \, \Intdom_{+})}
    \|p_{N}-p\|_{\Lspace^{2}(w^{-1}; \, \Intdom_{+})} \\
    &\le C (a/N)^{m/2} \|f\|_{\Lspace^{2}(w; \, \Intdom_{+})} 
    \| p \|_{\Hspace^{m}(w^{-1}; \, \Intdom_{+})}
  \end{align*}
  after invoking Theorem~\ref{th:Cinterp2} with $k = 1$.
\end{proof}

\begin{example}
  Continuing with $p$ as in Example~\ref{ex:expl}, put $f(x) = x^{k}$.
  Then the error in the $k$th mean can be estimated as
  \begin{align*}
    \left| EX_{N}^{k}-EX^{k} \right| &\le C (a/N)^{m/2} \left( M_{a}^{2k} \right)^{1/2} 
    \left( \frac{1-(\lambda/a)^{2(m+1)}}{1-(\lambda/a)^{2}} \right)^{1/2} \exp((a-\lambda)^{2}/(2a)),
  \end{align*}
  where $M_{a}^{2k}$ is the $2k$th moment of a Poisson distribution
  of parameter $a$. Reasoning as in Example~\ref{ex:expl} we find that
  for sufficiently regular target distrbutions $p$, and for a fix
  order of the moment $k$,
  \begin{align*}
    \left| EX_{N}^{k}-EX^{k} \right| &\le \exp(-cN),
  \end{align*}
  as $N$ tends to infinity.
\end{example}

Therefore we have shown in this section that we can approximate our Markov process with projections onto the Poisson-Charlier functions.  Moreover, when we use the projection estimates for the transition probabilities, we can also extrapolate these approximations for the moments of the Markov process and bound the truncation error.  These estimates are the basis for our explicit approximations in the next section, which are based on the projections onto the Poisson-Charlier functions.  We will show that a small number of terms of the expansion are all that is needed to capture much of the dynamics of several Markov processes.     



\section{Explicit Approximations for Some Birth-Death Processes}

In this section, we show how to use the expansions to explicitly approximate the mean and variance of some important birth-death processes.  Our approach to derive our explicit approximations is as follows

\begin{itemize}
\item Project and approximate the state probabilities with a truncated expansion of Poisson-Charlier polynomials.  
\item Substitute the approximate state probabilities and compute the rate functions with respect to the approximate state probabilities.  
\item Integrate the resulting differential equations that result from the approximation.  
\end{itemize}

The first example of a birth death process is the infinite server queue.  Once again, we suppress the time dependence of the parameters to ease notation. 

\subsection{The Infinite Server Queue ($M_t/M/\infty$)}
The infinite server queue is great model since much is known about the infinite server model.  The infinite server model is very tractable since it is a linear birth-death stochastic model and has an explicit solution when initialized with a Poisson distribution or at zero.  When initialized with a Poisson distribution or at zero, the queue length distribution is Poisson, which implies that all of its cumulant moments are equal to its mean.  The functional forward equations for the mean and variance of the infinite server queue are

\begin{eqnarray*}
\updot{E}[Q(t)] &=&\lambda - \mu \cdot E[Q] \\
\updot{\mathrm{Var}}[Q(t)]  &=& \lambda + \mu \cdot E[Q] - 2 \cdot \mu \cdot \mathrm{Var}[Q] .\\
\end{eqnarray*}

Unlike many other queueing models, the infinite server queeuing models does not need a closure approximation since the moments of order k only depend on the moments of order k and lower.  Therefore, the zeroth order approximation using the Poisson-Charlier functions is all that is needed to approximate this model since it has a Poisson distribution.  However, if one wants to approximate the infinite server queue with a value that is different than its mean value, then one can use the example in Section 3.  However, this level of simplicity is not the case for our next example, the Erlang-A queueing model.  

\subsection{Erlang-A Queueing Model or the ($M_t/M_t/C_t + M_t$) Queue }
The nonstationary multiserver queue with abandonment or the Erlang-A model is an important stochastic process for modelling service systems where customers are impatient and often leave the system.  Unlike the infinite server queue, this model not as tractable since it is not a closed dynamical system.  In order to compute the $k^{th}$ moment of the queue length process, it is necessary to know information about the $(k+1)^{th}$, moment because of the nonlinear and non-smooth max and min functions.  See for example \cite{Pen2} about the dependence of the max and min functions on higher moments of the queue length process.  The functional forward equations for the Erlang-A model are

\begin{eqnarray*}
\updot{E}[Q] &=&  \lambda -  \mu \cdot E[Q \wedge c ] - \beta \cdot E[(Q-c)^+] \\
\updot{\mathrm{Var}}[Q] &=&  \lambda + \mu \cdot E[Q \wedge c ] + \beta \cdot E[(Q-c)^+] - 2 \left (  \mu \cdot \mathrm{Cov}[Q, Q \wedge c  ] + \beta \cdot \mathrm{Cov}[Q, (Q-c)^+]    \right) .
\end{eqnarray*}

Now using the functional forward equations and the Poisson-Charlier expansion of the transition probabilities, we can derive several explicit approximations for the moments of the Erlang-A model.  The first approximation using just the Poisson distirbution as a surrogate for the distribution of the queue length and leads us to the following expressions for the rate functions of the functional forward equations.  

\begin{theorem}
 Under the zeroth order Poisson-Charlier approximation we have the following rate function values for the Erlang-A queueing model

\begin{eqnarray}
E[(Q-c)^+] &=&  a_0 \cdot \left[ q \cdot \Gamma(q, c-1) - c \cdot \Gamma(q, c)  \right]  \\ 
E[Q \wedge c] &=&  a_0 \cdot q - a_0 \cdot \left[ q \cdot \Gamma(q, c-1) - c \cdot \Gamma(q, c) \right ]  .
\end{eqnarray} 

Furthermore, under the first order Poisson-Charlier approximation we have the following rate function values for the Erlang-A queueing model

\begin{eqnarray}
E[(Q-c)^+] &=&  a_0 \cdot \left[ q \cdot \Gamma(q, c-1) - c \cdot \Gamma(q, c) \right]  \\  \nonumber
&&+ a_1 \cdot \left [ q^2 \cdot \Gamma(q, c-2) + q \cdot \Gamma(q, c-1) - q^2 \cdot \Gamma(q, c)  \right ] \\
E[Q \wedge c] &=&  (a_0 +a_1) \cdot q - a_0 \cdot \left[ q \cdot \Gamma(q, c-1) - c \cdot \Gamma(q, c) \right ]  \\  \nonumber
&&- a_1 \cdot \left [ q^2 \cdot \Gamma(q, c-2) + q \cdot \Gamma(q, c-1) - q^2 \cdot \Gamma(q, c)  \right ] \\
E[Q \cdot (Q-c)^+] &=&  a_0 \cdot \left( q^2 \cdot \Gamma(q, c-2) - q \cdot (c-1) \cdot \Gamma(q, c-1)  \right) \\
&&+ a_1 \cdot q \cdot \left( q \cdot \Gamma(q, c-2) - c \cdot \Gamma(q, c-1) \right) \\
&& - a_1 \cdot q \cdot  \left( q^2 \cdot \Gamma(q, c-2) - q \cdot (c-1) \cdot \Gamma(q, c-1)  \right)  \\
E[Q \cdot (Q \wedge c)] &=&  a_0 \cdot (q^2 + q) + a_1 \cdot( 2q^2 + q)  - E[Q \cdot (Q-c)^+]   \\  \nonumber
&&- a_1 \cdot \left [ q^2 \cdot \Gamma(q, c-2) + q \cdot \Gamma(q, c-1) - q^2 \cdot \Gamma(q, c)  \right ] 
\end{eqnarray} 
where $\Gamma(x,c)$ is the incomplete gamma function.  

\begin{proof}
This is given in the Appendix using the Chen-Stein identity, which is also given in the Appendix.
\end{proof}

\end{theorem}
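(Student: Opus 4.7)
The plan is to push the truncated Poisson--Charlier representation of the state probabilities directly through the sum defining each rate function, and then reduce every resulting weighted sum over $x \ge c$ to a linear combination of Poisson tail probabilities (the discrete incomplete-gamma function $\Gamma(q,\cdot)$) by iterating the Chen--Stein relation $x \,\omega(x) = q\,\omega(x-1)$. Concretely, taking the reference parameter $a = q$, the surrogate distributions are
\begin{align*}
  \tilde{p}^{(0)}(x) &= a_{0}\,\omega(x), \qquad
  \tilde{p}^{(1)}(x) = \omega(x)\bigl[a_{0}\,C_{0}^{a}(x) + a_{1}\,C_{1}^{a}(x)\bigr],
\end{align*}
with $C_{0}^{a} \equiv 1$ and $C_{1}^{a}(x) = (a-x)/\sqrt{a}$ from the recursion \eqref{eq:charrec}. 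The proof is then just a careful evaluation of $\sum_{x\ge c}(x-c)\,\tilde{p}^{(N)}(x)$ and $\sum_{x\ge 0}(x\wedge c)\,\tilde{p}^{(N)}(x)$ (and, for the first-order case, the analogous products with an additional factor of $x$).

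First I would introduce the auxiliary tail moments $S_{k}(c) := \sum_{x\ge c} x^{k}\,\omega(x)$, so that every claimed identity becomes an algebraic linear combination of the $S_{k}(c-j)$. The zeroth-order formulas then follow immediately from $E[(Q-c)^{+}] = a_{0}\bigl(S_{1}(c) - c\,S_{0}(c)\bigr)$ together with $E[Q\wedge c] = E[Q] - E[(Q-c)^{+}]$, where $E[Q] = a_{0}\,q$ under the zeroth-order surrogate. For the first-order case the $a_{1}$-term introduces an extra factor $(q-x)/\sqrt{q}$, which expands each contribution into $S_{0}, S_{1}, S_{2}$ at the shifted indices $c$, $c-1$, $c-2$. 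The products $E[Q(Q-c)^{+}]$ and $E[Q(Q\wedge c)]$ produce one additional power of $x$, pushing the required moments up to $S_{3}$.

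The engine that converts all $S_{k}(c)$ to incomplete-gamma tails is Chen--Stein. Since $x\,\omega(x) = q\,\omega(x-1)$, one obtains $S_{1}(c) = q\,\Gamma(q,c-1)$, and then inductively $S_{k}(c) = q\,\bigl[S_{k-1}(c-1) + S_{k-2}(c-1)\bigr]\cdots$, so each $S_{k}$ collapses to a polynomial-in-$q$ combination of $\Gamma(q,c-j)$ for $0 \le j \le k$. Collecting like terms and substituting back yields precisely the stated expressions; the identity $E[Q\wedge c] = E[Q] - E[(Q-c)^{+}]$ with $E[Q] = (a_{0}+a_{1})q$ under the first-order surrogate gives the second group of formulas, and the corresponding decomposition $E[Q(Q\wedge c)] = E[Q^{2}] - E[Q(Q-c)^{+}]$ supplies the last one.

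The main obstacle is purely organizational: in the first-order products $E[Q(Q-c)^{+}]$ and $E[Q(Q\wedge c)]$ one must keep track of how the two sources of extra $x$ factors (the multiplicative $Q$ and the Charlier polynomial $C_{1}^{a}$) combine to shift the incomplete-gamma argument by $-1$ or $-2$, and verify that the various $q$-powers line up with the claimed coefficients. A secondary point to be careful about is that $\tilde{p}^{(N)}$ is not itself a probability measure (it can be negative and its total mass equals $(1,p_{N}) = (1,p)$ via the $n=0$ Fourier coefficient), but since every manipulation is a purely algebraic identity on weighted sums, the Chen--Stein reductions remain valid verbatim.
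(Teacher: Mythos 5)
Your proposal follows essentially the same route as the paper's Appendix derivation: push the truncated Poisson--Charlier surrogate through each expectation, use the Chen--Stein identity to shift summation indices and collapse Poisson tail sums into incomplete gamma functions $\Gamma(q,\cdot)$, and obtain the minimum terms from the decompositions $(Q \wedge c) = Q - (Q-c)^{+}$ and $E[Q\,(Q\wedge c)] = E[Q^{2}] - E[Q\,(Q-c)^{+}]$. The only cosmetic differences are your bookkeeping via the tail moments $S_{k}(c)$ (the paper manipulates the sums directly) and your use of the normalized $C_{1}^{a}(x) = (a-x)/\sqrt{a}$ where the Appendix works with the unnormalized $C_{1}^{a}(x) = x - a$, a sign and scaling that is simply absorbed into the coefficient $a_{1}$.
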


\subsection{A Quadratic Birth Death Process}

\begin{eqnarray}
\nonumber
\updot{E}\sqparen{ f(Q) } &=&\lambda \cdot
E\sqparen{ \paren{f(Q+1)-f(Q)} \cdot (b_0 + b_1 \cdot Q + b_2 \cdot Q^2 ) }\\ \nonumber
&&+\mu\cdot E\sqparen{\paren{d_0 + d_1 \cdot Q + d_2 \cdot Q^2}\cdot\paren{f(Q-1)-f(Q)}} \\,
\end{eqnarray}
for all summable functions $f$.

For the special cases of the mean, variance and third cumulant moment, we have that 

\begin{eqnarray*}
\updot{E}[Q] &=&  b_0 - d_0 + (b_1 - d_1) \cdot E[Q ] - (b_2 - d_2) \cdot  E[Q^2] \\
\updot{\mathrm{Var}}[Q] &=&  b_0 + d_0  + (b_1 + d_1) \cdot E[Q ] + (b_2 + d_2) \cdot E[Q^2]  \\
&+& 2 \left (  (b_1 - d_1)  \cdot \mathrm{Cov}[Q, Q  ] + (b_2 - d_2) \cdot \mathrm{Cov}[Q, Q^2]    \right).
\end{eqnarray*}

\begin{theorem}
 Under the zeroth order Poisson-Charlier approximation we have the following rate function values for the quadratic rate birth-death model 

\begin{eqnarray*}
E[Q] &=&  a_0 \cdot q  \\ 
E[Q^2 ] &=&  a_0 \cdot ( q^2 + q )  \\ 
\end{eqnarray*} 

Furthermore, under the first order Poisson-Charlier approximation we have the following rate function values for the quadratic rate birth-death model 

\begin{eqnarray*}
E[Q] &=&  a_0 \cdot q  \\ 
E[Q^2 ] &=&  a_0 \cdot ( q^2 + q ) + a_1 \cdot ( 2 \cdot q^2 + q ) \\ 
\mathrm{Cov}[Q,Q] &=& a_0 \cdot (q^2 + q) + a_1 \cdot ( 2 \cdot q^2 + q ) - a_0^2 \cdot q^2 \\
\mathrm{Cov}[Q,Q^2] &=& a_0 \cdot T_3  + a_1 \cdot ( T_4  - q \cdot T_3  )  + a_0 \cdot q \cdot \left( a_0 \cdot T_2 + a_1 \cdot ( T_3 - q \cdot T_2 ) \right)  \\
\end{eqnarray*} 
where $T_j$ is the $j^{th}$ Touchard polynomial, which are described in the Appendix.  

\begin{proof}
This is given in the Appendix using the Chen-Stein identity, which is also given in the Appendix.
\end{proof}
\end{theorem}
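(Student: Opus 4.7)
The plan is to substitute the truncated Poisson-Charlier projection directly into the sums defining each moment of interest, and then to evaluate the resulting Poisson-weighted sums using a Chen-Stein recursion. Write the $N$-term projection as
\[
  p_N(x) = w(x) \sum_{j=0}^{N} a_j C_j^q(x),
\]
where $w(x) = e^{-q} q^x / x!$ is the Poisson$(q)$ weight. Any approximate quantity of the form $E[f(Q)] \approx (f,p_N)$ then decomposes as the finite linear combination $\sum_{j=0}^N a_j (f,C_j^q)_w$, each of whose terms is a polynomial-weighted Poisson expectation. The Chen-Stein identity $(x g(x),1)_w = q\,(g(x+1),1)_w$, iterated, reduces every such expectation to the Touchard polynomials $T_k(q) := (x^k,1)_w$, which are the $k$th Poisson moments.

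For the zeroth-order statement I take $N = 0$, so that $p_0 = a_0 w$. The claimed formulas $E[Q] = a_0 q = a_0 T_1(q)$ and $E[Q^2] = a_0 (q^2 + q) = a_0 T_2(q)$ then follow by reading off the first two Poisson moments.

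For the first-order statement I take $N = 1$ and compute $E[Q]$, $E[Q^2]$, and $E[Q^3]$ by evaluating the new contributions $a_1 (x^k, C_1^q)_w$ for $k = 1, 2, 3$. Each such inner product collapses via Chen-Stein into a short linear combination of Touchard polynomials of the shape $T_{k+1} - q T_k$, which are exactly the building blocks appearing in the statement. The remaining two identities, $\mathrm{Cov}[Q,Q] = E[Q^2] - E[Q]^2$ and $\mathrm{Cov}[Q,Q^2] = E[Q^3] - E[Q]\,E[Q^2]$, then give the claimed expressions after collecting terms.

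The main difficulty will be algebraic bookkeeping rather than anything conceptual. Because $\mathrm{Cov}[Q,Q^2]$ is quadratic in the expansion coefficients (through the product $E[Q]\,E[Q^2]$) and linear in both $T_3$ and $T_4$, care is needed to reassemble the result into the compact form involving $T_j - q T_{j-1}$ in the statement. Once the normalization of $C_1^q$ is fixed so that the Chen-Stein shifts yield exactly these combinations, the remaining manipulations are direct and can be verified term by term against the explicit expressions for $T_2$, $T_3$, $T_4$.
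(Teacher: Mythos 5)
Your proposal is correct and follows essentially the same route as the paper's appendix derivation: substitute the truncated Poisson-Charlier expansion into the moment sums, iterate the Chen-Stein identity to reduce each Poisson-weighted polynomial expectation to Touchard polynomials (giving $E_0[Q^k] = a_0 \cdot T_k$ and $E_1[Q^k] = a_1 \cdot (T_{k+1} - q \cdot T_k)$, exactly as the paper computes), and then assemble $\mathrm{Cov}[Q,Q]$ and $\mathrm{Cov}[Q,Q^2]$ from the moment formulas. One caveat worth noting: carrying your plan out literally yields an $a_1 \cdot q$ contribution to the first-order $E[Q]$ and a \emph{minus} sign on the product term in $\mathrm{Cov}[Q,Q^2] = E[Q^3] - E[Q]\cdot E[Q^2]$, whereas the theorem as displayed drops the former and shows a plus sign on the latter; these discrepancies trace to the paper's statement (its own appendix computations agree with yours), not to any flaw in your method.
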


\section{Numerical Results}

In this section, we demonstrate the performance and accuracy of our approximation methods using several orders of the approximation.  Errors were measured in a time averaged relative sense,
\begin{align*}
  \mbox{Error} &\equiv \int_{0}^{T} \frac{|u-u^{*}|}{|u^{*}|} \frac{dt}{T},
\end{align*}
with $u$ an approximation to $u^{*}$. For the cases where the initial data at $t = 0$ caused difficulties with division by zero the lower limit of integration was simply replaced with $1$ and the measure of integration renormalized accordingly. In practise, the integral was approximated using discrete points spaced $0.001$ units apart. For the exact solution we used a numerical solution of order at least twice as high the order of the approximation to be judged.

\subsection{Erlang-A Model}

Here we provide some tables for the relative errors of the several orders of the approximation for the mean, variance, skewness, and kurtosis of the Erlang-A queueing model.    We see in Tables \ref{tab:queue1_errs1} - \ref{tab:queue1_errs5} that the spectral method is performing quite well at approximating the dynamics of the queueing process.  We see that unlike the fluid and diffusion limits, the performance of the method is independent of the scaling of the queueing process since the method works just as well in Table  \ref{tab:queue1_errs1} as it does for Table  \ref{tab:queue1_errs5}.     

\begin{table}[htp]
\centering
\begin{tabular}{rllll}
	\hline
	$N$ & Mean & Variance & Skewness & Kurtosis 	\\
	\hline
	1 & $2.53 \cdot 10^{-3}$ & $2.71 \cdot 10^{-1}$ &
		 $6.04 \cdot 10^{-1}$ & $4.67 \cdot 10^{-2}$ 	\\
	2 & $5.18 \cdot 10^{-4}$ & $1.60 \cdot 10^{-2}$ &
		 $4.90 \cdot 10^{-1}$ & $1.03 \cdot 10^{-1}$ 	\\
	3 & $2.80 \cdot 10^{-4}$ & $3.10 \cdot 10^{-3}$ &
		 $6.23 \cdot 10^{-2}$ & $8.92 \cdot 10^{-2}$ 	\\
	4 & $1.67 \cdot 10^{-4}$ & $2.44 \cdot 10^{-3}$ &
		 $1.25 \cdot 10^{-2}$ & $6.57 \cdot 10^{-3}$ 	\\
	5 & $1.57 \cdot 10^{-4}$ & $2.04 \cdot 10^{-3}$ &
		 $1.10 \cdot 10^{-2}$ & $3.43 \cdot 10^{-3}$ 	\\
	6 & $1.11 \cdot 10^{-4}$ & $1.72 \cdot 10^{-3}$ &
		 $1.33 \cdot 10^{-2}$ & $2.33 \cdot 10^{-3}$ 	\\
	7 & $7.59 \cdot 10^{-5}$ & $1.24 \cdot 10^{-3}$ &
		 $9.04 \cdot 10^{-3}$ & $1.37 \cdot 10^{-3}$ 	\\
	\hline
\end{tabular}
\caption{Relative error in the first four moments for increasing order 
$N$. $\lambda(t) = 100+20 \sin(t)$, $\mu = 1$, $\beta = 0.5$, $c = 100$. 
$\psi_{\alpha} = \lambda(t)$, $\psi_{\delta} = \mu \cdot (Q \wedge c)+
\beta \cdot (Q-c)^{+}$.}
\label{tab:queue1_errs1}
\end{table}
\begin{table}[htp]
\centering
\begin{tabular}{rllll}
	\hline
	$N$ & Mean & Variance & Skewness & Kurtosis 	\\
	\hline
	1 & $1.05 \cdot 10^{-6}$ & $1.05 \cdot 10^{-6}$ &
		 $5.23 \cdot 10^{-7}$ & $3.91 \cdot 10^{-9}$ 	\\
	2 & $1.05 \cdot 10^{-7}$ & $8.41 \cdot 10^{-8}$ &
		 $8.31 \cdot 10^{-8}$ & $8.01 \cdot 10^{-10}$ 	\\
	3 & $1.05 \cdot 10^{-8}$ & $8.49 \cdot 10^{-9}$ &
		 $7.30 \cdot 10^{-9}$ & $5.39 \cdot 10^{-11}$ 	\\
	4 & $1.14 \cdot 10^{-9}$ & $9.28 \cdot 10^{-10}$ &
		 $7.90 \cdot 10^{-10}$ & $2.99 \cdot 10^{-9}$ 	\\
	5 & $2.48 \cdot 10^{-10}$ & $2.24 \cdot 10^{-10}$ &
		 $1.35 \cdot 10^{-10}$ & $1.74 \cdot 10^{-11}$ 	\\
	6 & $1.51 \cdot 10^{-10}$ & $1.53 \cdot 10^{-10}$ &
		 $7.30 \cdot 10^{-11}$ & $8.44 \cdot 10^{-13}$ 	\\
	7 & $3.06 \cdot 10^{-10}$ & $3.07 \cdot 10^{-10}$ &
		 $1.52 \cdot 10^{-10}$ & $1.03 \cdot 10^{-12}$ 	\\
	\hline
\end{tabular}
\caption{Relative error in the first four moments for increasing order 
$N$. $\lambda(t) = 100+20 \sin(t)$, $\mu = 1$, $\beta = 1.0$, $c = 100$. 
$\psi_{\alpha} = \lambda(t)$, $\psi_{\delta} = \mu \cdot (Q \wedge c)+
\beta \cdot (Q-c)^{+}$.}
\label{tab:queue1_errs2}
\end{table}
\begin{table}[htp]
\centering
\begin{tabular}{rllll}
	\hline
	$N$ & Mean & Variance & Skewness & Kurtosis 	\\
	\hline
	1 & $3.05 \cdot 10^{-3}$ & $3.31 \cdot 10^{-1}$ &
		 $9.27 \cdot 10^{0}$ & $4.24 \cdot 10^{-2}$ 	\\
	2 & $1.06 \cdot 10^{-3}$ & $3.14 \cdot 10^{-2}$ &
		 $7.29 \cdot 10^{0}$ & $2.49 \cdot 10^{-1}$ 	\\
	3 & $6.60 \cdot 10^{-4}$ & $1.05 \cdot 10^{-2}$ &
		 $2.33 \cdot 10^{0}$ & $1.90 \cdot 10^{-1}$ 	\\
	4 & $6.12 \cdot 10^{-4}$ & $1.29 \cdot 10^{-2}$ &
		 $1.51 \cdot 10^{0}$ & $2.16 \cdot 10^{-2}$ 	\\
	5 & $3.51 \cdot 10^{-4}$ & $7.42 \cdot 10^{-3}$ &
		 $9.65 \cdot 10^{-1}$ & $8.70 \cdot 10^{-3}$ 	\\
	6 & $3.13 \cdot 10^{-4}$ & $6.68 \cdot 10^{-3}$ &
		 $8.87 \cdot 10^{-1}$ & $8.64 \cdot 10^{-3}$ 	\\
	7 & $3.95 \cdot 10^{-4}$ & $7.86 \cdot 10^{-3}$ &
		 $3.78 \cdot 10^{-1}$ & $9.01 \cdot 10^{-3}$ 	\\
	\hline
\end{tabular}
\caption{Relative error in the first four moments for increasing order 
$N$. $\lambda(t) = 100+20 \sin(t)$, $\mu = 1$, $\beta = 2.0$, $c = 100$. 
$\psi_{\alpha} = \lambda(t)$, $\psi_{\delta} = \mu \cdot (Q \wedge c)+
\beta \cdot (Q-c)^{+}$.}
\label{tab:queue1_errs3}
\end{table}
\begin{table}[htp]
\centering
\begin{tabular}{rllll}
	\hline
	$N$ & Mean & Variance & Skewness & Kurtosis 	\\
	\hline
	1 & $7.92 \cdot 10^{-3}$ & $2.75 \cdot 10^{-1}$ &
		 $5.07 \cdot 10^{-1}$ & $4.55 \cdot 10^{-2}$ 	\\
	2 & $1.24 \cdot 10^{-3}$ & $1.03 \cdot 10^{-2}$ &
		 $3.61 \cdot 10^{-1}$ & $1.00 \cdot 10^{-1}$ 	\\
	3 & $1.05 \cdot 10^{-3}$ & $6.84 \cdot 10^{-3}$ &
		 $3.90 \cdot 10^{-2}$ & $6.27 \cdot 10^{-2}$ 	\\
	4 & $9.38 \cdot 10^{-4}$ & $4.89 \cdot 10^{-3}$ &
		 $2.25 \cdot 10^{-2}$ & $8.18 \cdot 10^{-3}$ 	\\
	5 & $8.99 \cdot 10^{-4}$ & $3.16 \cdot 10^{-3}$ &
		 $1.68 \cdot 10^{-2}$ & $4.85 \cdot 10^{-3}$ 	\\
	6 & $5.82 \cdot 10^{-4}$ & $2.92 \cdot 10^{-3}$ &
		 $1.41 \cdot 10^{-2}$ & $3.95 \cdot 10^{-3}$ 	\\
	7 & $3.52 \cdot 10^{-4}$ & $1.71 \cdot 10^{-3}$ &
		 $8.04 \cdot 10^{-3}$ & $2.10 \cdot 10^{-3}$ 	\\
	\hline
\end{tabular}
\caption{Relative error in the first four moments for increasing order 
$N$. $\lambda(t) = 25+5 \sin(t)$, $\mu = 1$, $\beta = 0.5$, $c = 25$. 
$\psi_{\alpha} = \lambda(t)$, $\psi_{\delta} = \mu \cdot (Q \wedge c)+
\beta \cdot (Q-c)^{+}$.}
\label{tab:queue1_errs4}
\end{table}
\begin{table}[htp]
\centering
\begin{tabular}{rllll}
	\hline
	$N$ & Mean & Variance & Skewness & Kurtosis 	\\
	\hline
	1 & $1.34 \cdot 10^{-2}$ & $2.67 \cdot 10^{-1}$ &
		 $3.94 \cdot 10^{-1}$ & $4.93 \cdot 10^{-2}$ 	\\
	2 & $2.24 \cdot 10^{-3}$ & $6.11 \cdot 10^{-3}$ &
		 $2.17 \cdot 10^{-1}$ & $8.67 \cdot 10^{-2}$ 	\\
	3 & $2.50 \cdot 10^{-3}$ & $4.10 \cdot 10^{-3}$ &
		 $3.91 \cdot 10^{-2}$ & $4.26 \cdot 10^{-2}$ 	\\
	4 & $2.23 \cdot 10^{-3}$ & $5.74 \cdot 10^{-3}$ &
		 $2.08 \cdot 10^{-2}$ & $5.27 \cdot 10^{-3}$ 	\\
	5 & $1.22 \cdot 10^{-3}$ & $2.15 \cdot 10^{-3}$ &
		 $1.32 \cdot 10^{-2}$ & $2.76 \cdot 10^{-3}$ 	\\
	6 & $1.15 \cdot 10^{-3}$ & $2.30 \cdot 10^{-3}$ &
		 $1.31 \cdot 10^{-2}$ & $3.25 \cdot 10^{-3}$ 	\\
	7 & $1.10 \cdot 10^{-3}$ & $3.63 \cdot 10^{-3}$ &
		 $9.57 \cdot 10^{-3}$ & $3.61 \cdot 10^{-3}$ 	\\
	\hline
\end{tabular}
\caption{Relative error in the first four moments for increasing order 
$N$. $\lambda(t) = 10+2 \sin(t)$, $\mu = 1$, $\beta = 0.5$, $c = 10$. 
$\psi_{\alpha} = \lambda(t)$, $\psi_{\delta} = \mu \cdot (Q \wedge c)+
\beta \cdot (Q-c)^{+}$.}
\label{tab:queue1_errs5}
\end{table}


\subsection{Quadratic Rate Example}

Here we provide some tables for the relative errors of the several orders of the approximation for the mean, variance, skewness, and kurtosis for a quadratic rate birth death model.    We see in Tables \ref{tab:queue3_errs1} - \ref{tab:queue3_errs5} that the spectral method is performing quite well at approximating the dynamics of the quadratic birth-death process.  Like in the queueing model before, the performance of the method is independent of the scaling of the queueing process since the method works just as well in Table  \ref{tab:queue3_errs1} as it does for Table  \ref{tab:queue3_errs5}.  Thus, we have confidence that the spectral method is approximating the nonstationary and state dependent dynamics of the stochastic model quite well.  

\begin{table}[htp]
\centering
\begin{tabular}{rllll}
	\hline
	$N$ & Mean & Variance & Skewness & Kurtosis 	\\
	\hline
	1 & $1.85 \cdot 10^{-2}$ & $2.80 \cdot 10^{0}$ &
		 $1.51 \cdot 10^{0}$ & $2.56 \cdot 10^{-2}$ 	\\
	2 & $5.57 \cdot 10^{-4}$ & $8.75 \cdot 10^{-2}$ &
		 $2.84 \cdot 10^{0}$ & $6.97 \cdot 10^{0}$ 	\\
	3 & $1.01 \cdot 10^{-3}$ & $1.56 \cdot 10^{-1}$ &
		 $7.69 \cdot 10^{0}$ & $1.16 \cdot 10^{1}$ 	\\
	4 & $1.11 \cdot 10^{-4}$ & $1.74 \cdot 10^{-2}$ &
		 $6.72 \cdot 10^{-1}$ & $9.34 \cdot 10^{-1}$ 	\\
	5 & $8.66 \cdot 10^{-5}$ & $1.34 \cdot 10^{-2}$ &
		 $4.88 \cdot 10^{-1}$ & $6.56 \cdot 10^{-1}$ 	\\
	6 & $2.13 \cdot 10^{-5}$ & $3.33 \cdot 10^{-3}$ &
		 $1.24 \cdot 10^{-1}$ & $1.69 \cdot 10^{-1}$ 	\\
	7 & $9.04 \cdot 10^{-6}$ & $1.39 \cdot 10^{-3}$ &
		 $5.20 \cdot 10^{-2}$ & $7.02 \cdot 10^{-2}$ 	\\
	\hline
\end{tabular}
\caption{Relative error in the first four moments for increasing order 
$N$. $\lambda(t) = 0.1+0.02 \sin(t)$, $\tilde{Q} = 50$, $\beta = 1$, $Q(0) = 20$, 
$\psi_{\alpha} = \lambda(t) \cdot Q(\tilde{Q}-Q)$, 
$\psi_{\delta} = \beta \cdot Q$.}
\label{tab:queue3_errs1}
\end{table}
\begin{table}[htp]
\centering
\begin{tabular}{rllll}
	\hline
	$N$ & Mean & Variance & Skewness & Kurtosis 	\\
	\hline
	1 & $8.60 \cdot 10^{-3}$ & $3.61 \cdot 10^{-1}$ &
		 $1.82 \cdot 10^{0}$ & $2.72 \cdot 10^{-2}$ 	\\
	2 & $1.24 \cdot 10^{-3}$ & $4.63 \cdot 10^{-2}$ &
		 $1.15 \cdot 10^{0}$ & $3.02 \cdot 10^{-1}$ 	\\
	3 & $6.47 \cdot 10^{-4}$ & $2.60 \cdot 10^{-2}$ &
		 $6.45 \cdot 10^{-1}$ & $3.38 \cdot 10^{-1}$ 	\\
	4 & $2.66 \cdot 10^{-5}$ & $1.16 \cdot 10^{-3}$ &
		 $2.53 \cdot 10^{-2}$ & $1.21 \cdot 10^{-2}$ 	\\
	5 & $3.39 \cdot 10^{-5}$ & $1.57 \cdot 10^{-3}$ &
		 $3.93 \cdot 10^{-2}$ & $2.06 \cdot 10^{-2}$ 	\\
	6 & $2.31 \cdot 10^{-5}$ & $9.80 \cdot 10^{-4}$ &
		 $1.88 \cdot 10^{-2}$ & $9.96 \cdot 10^{-3}$ 	\\
	7 & $1.52 \cdot 10^{-5}$ & $6.54 \cdot 10^{-4}$ &
		 $1.39 \cdot 10^{-2}$ & $7.58 \cdot 10^{-3}$ 	\\
	\hline
\end{tabular}
\caption{Relative error in the first four moments for increasing order 
$N$. $\lambda(t) = 0.05+0.01 \sin(t)$, $\tilde{Q} = 50$, $\beta = 1$, $Q(0) = 20$, 
$\psi_{\alpha} = \lambda(t) \cdot Q(\tilde{Q}-Q)$, 
$\psi_{\delta} = \beta \cdot Q$.}
\label{tab:queue3_errs2}
\end{table}
\begin{table}[htp]
\centering
\begin{tabular}{rllll}
	\hline
	$N$ & Mean & Variance & Skewness & Kurtosis 	\\
	\hline
	1 & $3.08 \cdot 10^{-1}$ & $6.74 \cdot 10^{-1}$ &
		 $9.25 \cdot 10^{0}$ & $3.87 \cdot 10^{-1}$ 	\\
	2 & $1.98 \cdot 10^{-1}$ & $3.49 \cdot 10^{-1}$ &
		 $1.30 \cdot 10^{1}$ & $1.57 \cdot 10^{-1}$ 	\\
	3 & $1.96 \cdot 10^{-1}$ & $3.35 \cdot 10^{-1}$ &
		 $9.19 \cdot 10^{0}$ & $1.43 \cdot 10^{-1}$ 	\\
	4 & $1.57 \cdot 10^{-1}$ & $2.54 \cdot 10^{-1}$ &
		 $4.42 \cdot 10^{0}$ & $1.22 \cdot 10^{-1}$ 	\\
	5 & $1.44 \cdot 10^{-1}$ & $2.25 \cdot 10^{-1}$ &
		 $3.79 \cdot 10^{0}$ & $1.17 \cdot 10^{-1}$ 	\\
	6 & $1.16 \cdot 10^{-1}$ & $1.74 \cdot 10^{-1}$ &
		 $3.60 \cdot 10^{0}$ & $1.03 \cdot 10^{-1}$ 	\\
	7 & $9.51 \cdot 10^{-2}$ & $1.37 \cdot 10^{-1}$ &
		 $3.26 \cdot 10^{0}$ & $8.55 \cdot 10^{-2}$ 	\\
	\hline
\end{tabular}
\caption{Relative error in the first four moments for increasing order 
$N$. $\lambda(t) = 0.03+0.01 \sin(t)$, $\tilde{Q} = 50$, $\beta = 1$, $Q(0) = 20$, 
$\psi_{\alpha} = \lambda(t) \cdot Q(\tilde{Q}-Q)$, 
$\psi_{\delta} = \beta \cdot Q$.}
\label{tab:queue3_errs3}
\end{table}
\begin{table}[htp]
\centering
\begin{tabular}{rllll}
	\hline
	$N$ & Mean & Variance & Skewness & Kurtosis 	\\
	\hline
	1 & $9.79 \cdot 10^{-3}$ & $7.94 \cdot 10^{0}$ &
		 $1.36 \cdot 10^{0}$ & $2.88 \cdot 10^{-2}$ 	\\
	2 & $1.81 \cdot 10^{-4}$ & $1.51 \cdot 10^{-1}$ &
		 $1.04 \cdot 10^{1}$ & $5.07 \cdot 10^{1}$ 	\\
	3 & $2.83 \cdot 10^{-4}$ & $2.33 \cdot 10^{-1}$ &
		 $3.49 \cdot 10^{1}$ & $1.55 \cdot 10^{2}$ 	\\
	4 & $1.82 \cdot 10^{-5}$ & $1.52 \cdot 10^{-2}$ &
		 $1.38 \cdot 10^{0}$ & $4.93 \cdot 10^{0}$ 	\\
	5 & $1.30 \cdot 10^{-5}$ & $1.08 \cdot 10^{-2}$ &
		 $9.35 \cdot 10^{-1}$ & $3.24 \cdot 10^{0}$ 	\\
	6 & $1.82 \cdot 10^{-6}$ & $1.52 \cdot 10^{-3}$ &
		 $1.35 \cdot 10^{-1}$ & $4.74 \cdot 10^{-1}$ 	\\
	7 & $7.76 \cdot 10^{-7}$ & $6.41 \cdot 10^{-4}$ &
		 $5.69 \cdot 10^{-2}$ & $1.99 \cdot 10^{-1}$ 	\\
	\hline
\end{tabular}
\caption{Relative error in the first four moments for increasing order 
$N$. $\lambda(t) = 0.1+0.05 \sin(t)$, $\tilde{Q} = 100$, $\beta = 1$, $Q(0) = 40$, 
$\psi_{\alpha} = \lambda(t) \cdot Q(\tilde{Q}-Q)$, 
$\psi_{\delta} = \beta \cdot Q$.}
\label{tab:queue3_errs4}
\end{table}
\begin{table}[htp]
\centering
\begin{tabular}{rllll}
	\hline
	$N$ & Mean & Variance & Skewness & Kurtosis 	\\
	\hline
	1 & $9.86 \cdot 10^{-3}$ & $7.85 \cdot 10^{0}$ &
		 $1.34 \cdot 10^{0}$ & $2.87 \cdot 10^{-2}$ 	\\
	2 & $1.86 \cdot 10^{-4}$ & $1.50 \cdot 10^{-1}$ &
		 $1.06 \cdot 10^{1}$ & $4.62 \cdot 10^{1}$ 	\\
	3 & $2.86 \cdot 10^{-4}$ & $2.30 \cdot 10^{-1}$ &
		 $3.07 \cdot 10^{1}$ & $1.08 \cdot 10^{2}$ 	\\
	4 & $1.87 \cdot 10^{-5}$ & $1.51 \cdot 10^{-2}$ &
		 $1.36 \cdot 10^{0}$ & $4.13 \cdot 10^{0}$ 	\\
	5 & $1.32 \cdot 10^{-5}$ & $1.07 \cdot 10^{-2}$ &
		 $9.22 \cdot 10^{-1}$ & $2.77 \cdot 10^{0}$ 	\\
	6 & $1.88 \cdot 10^{-6}$ & $1.51 \cdot 10^{-3}$ &
		 $1.33 \cdot 10^{-1}$ & $4.01 \cdot 10^{-1}$ 	\\
	7 & $7.88 \cdot 10^{-7}$ & $6.34 \cdot 10^{-4}$ &
		 $5.59 \cdot 10^{-2}$ & $1.69 \cdot 10^{-1}$ 	\\
	\hline
\end{tabular}
\caption{Relative error in the first four moments for increasing order 
$N$. $\lambda(t) = 0.1+0.02 \sin(t)$, $\tilde{Q} = 100$, $\beta = 1$, $Q(0) = 40$, 
$\psi_{\alpha} = \lambda(t) \cdot Q(\tilde{Q}-Q)$, 
$\psi_{\delta} = \beta \cdot Q$.}
\label{tab:queue3_errs5}
\end{table}

\subsection{Plots of zeroth and first orders of approximation}

In addition to the relative errors for the stochastic models, we also plot several different examples of queueing models and the performance of the explicit approximations given in the previous section.  On the top left of Figure \ref{Fig1}, we plot the mean of the queueing model with the parameters give in the caption of Figure \ref{Fig1}.  This type of queueing model represents an system where customers are relatively patient when measured against the mean service time.  We see that the zeroth and first order approximations are quite good at estimating the mean behavior of the queueing model.  On the middle left, of Figure \ref{Fig1} we plot the variance and we see that the first order approximation is quite accurate, however, the zeroth order approximation is less accurate since the mean is not equal to the variance as the Poisson distribution suggests.  Lastly on the bottom left of Figure \ref{Fig1} we plot the probability of delay of the queueing model and we see that both the zeroth order and the first order are very accurate at estimating its behavior.  On the right of Figure \ref{Fig1}, we have the Erlang-A model where customers are impatient relative to the service time.  On the right of Figure \ref{Fig1}, we also see similar behavior for the mean, variance, and probability of delay to the left side for a different set of parameters, which are at the bottom of the caption in Figure \ref{Fig1}.  The zeroth and first order approximations are accurate, except for the variance where the first order is much better at estimating its dynamics.

\begin{figure}[ht]
\captionsetup{justification=centering}
\vspace{-1.5in} \hspace{-.75in}~\includegraphics[scale =
.5]{./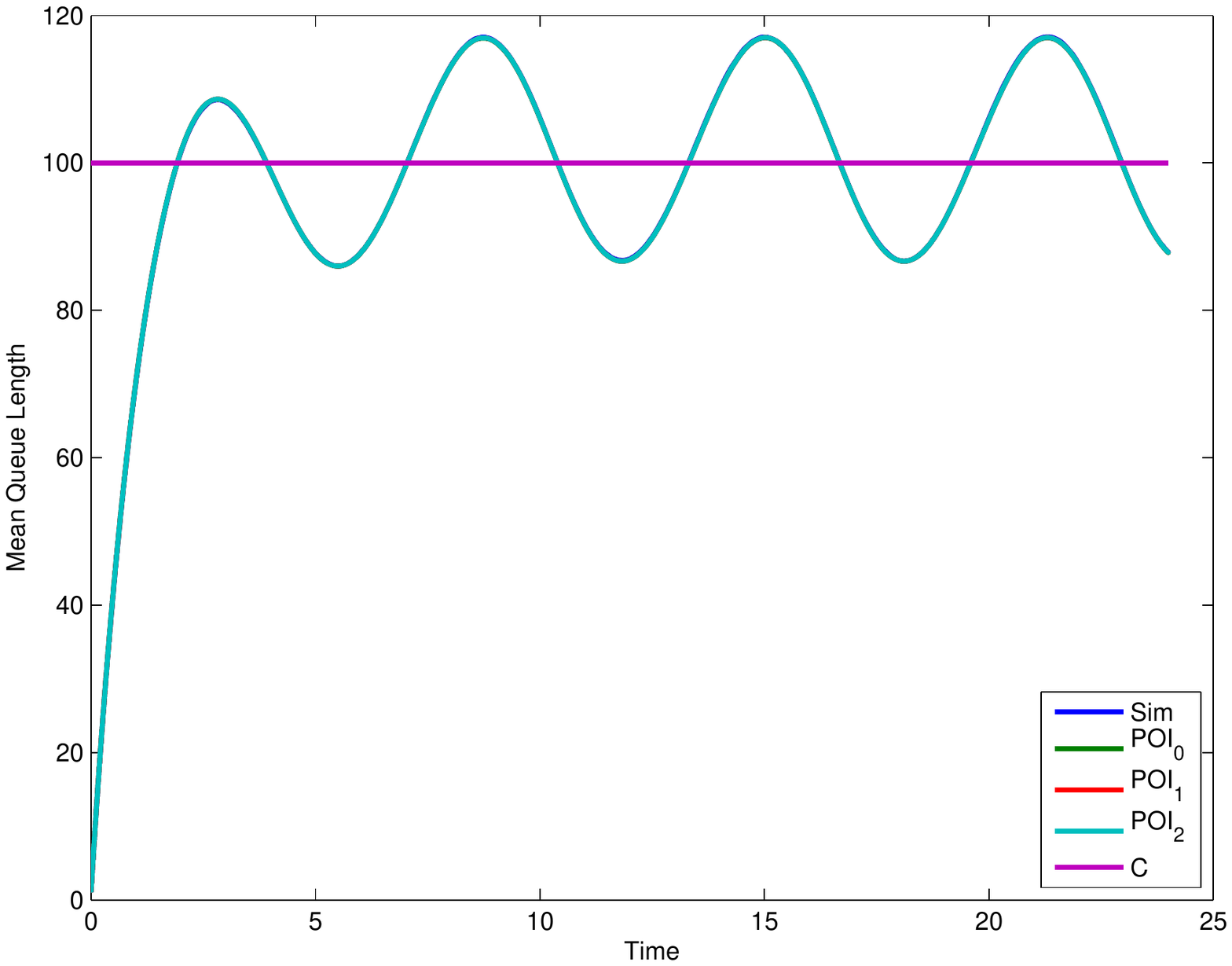}~\hspace{-.75in}~\includegraphics[scale =
.5]{./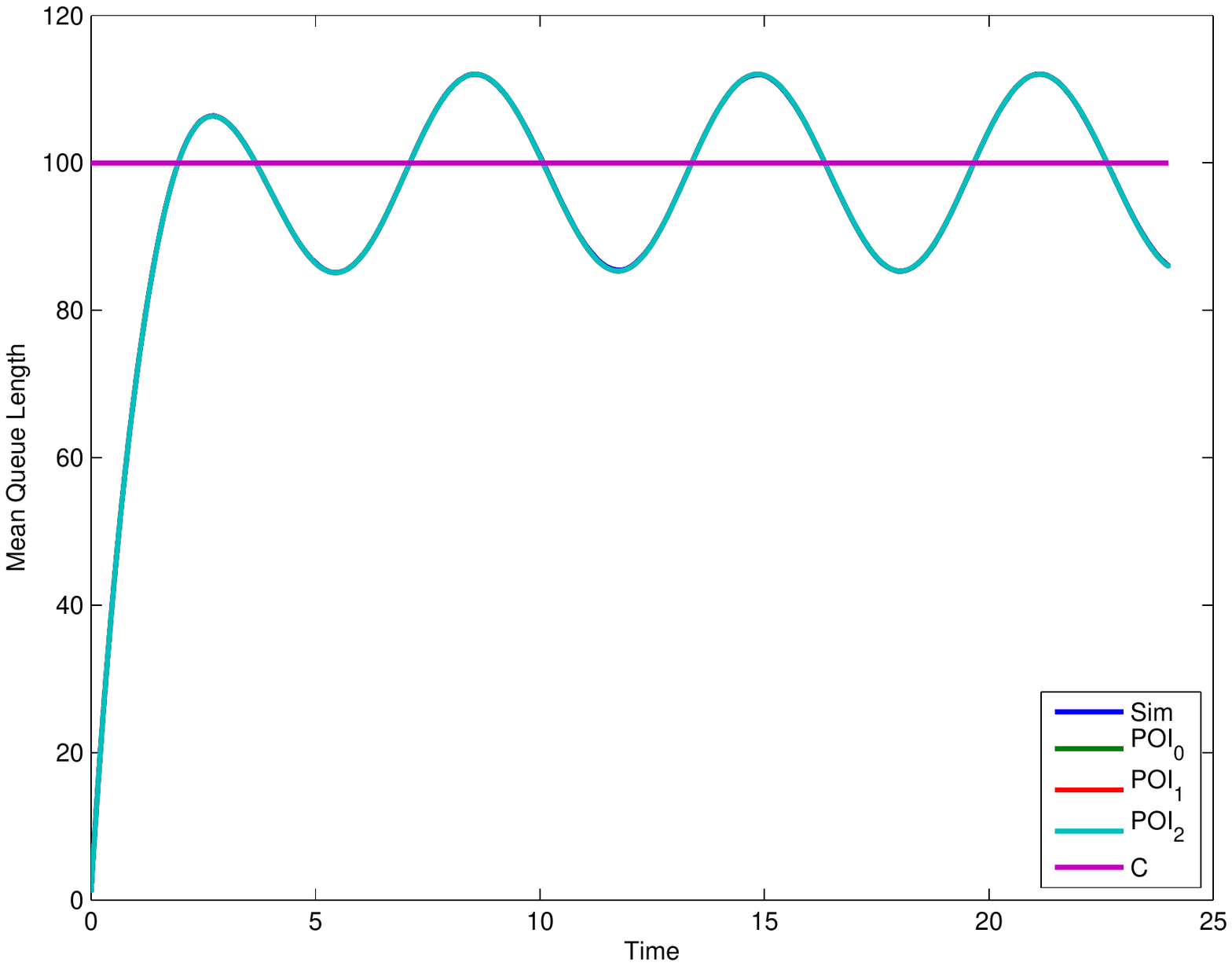}
\vspace{-3in}
\\
\vspace{-1.5in}
 \hspace{-.75in}~\includegraphics[scale =
.5]{./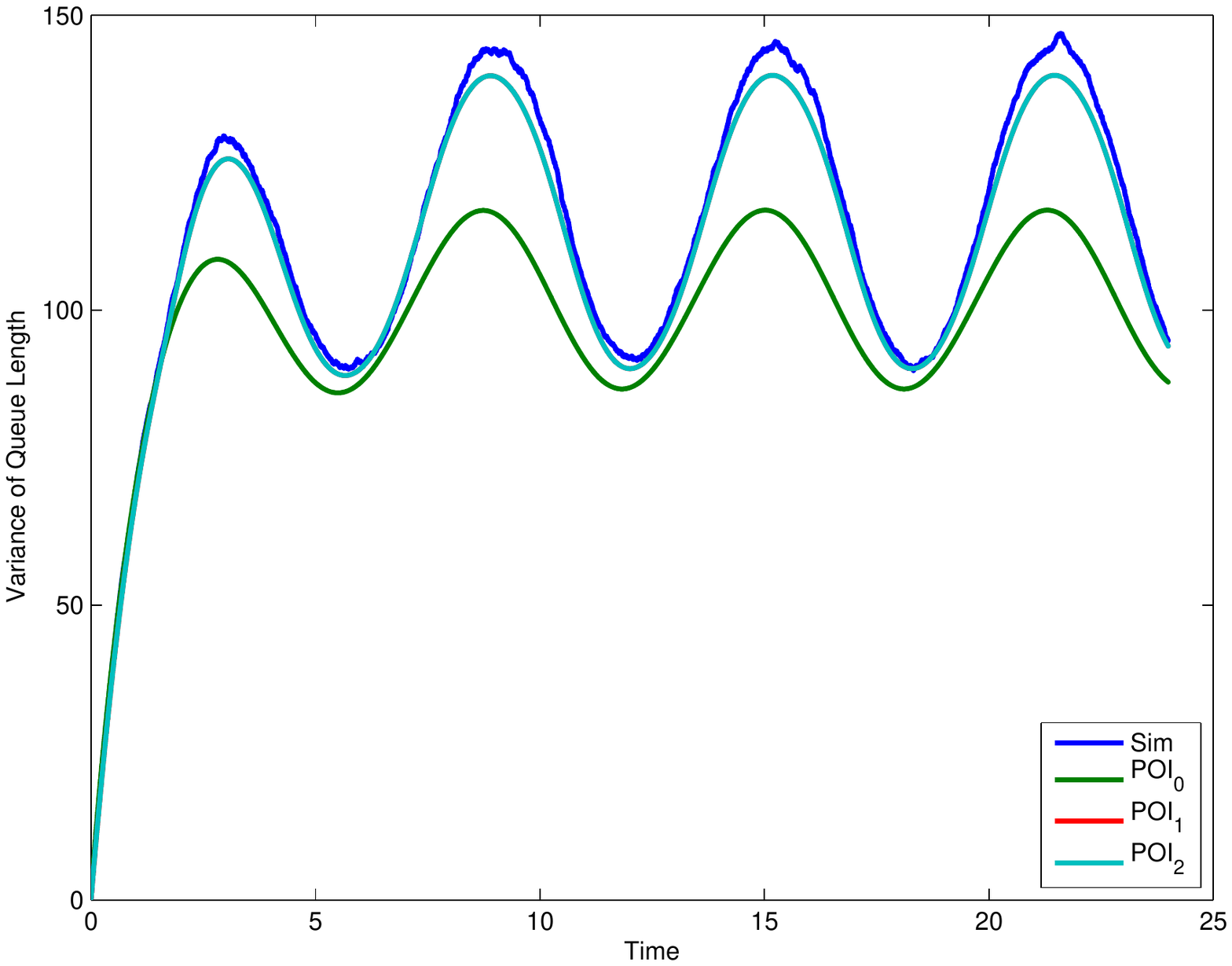}~\hspace{-.75in}~\includegraphics[scale =
.5]{./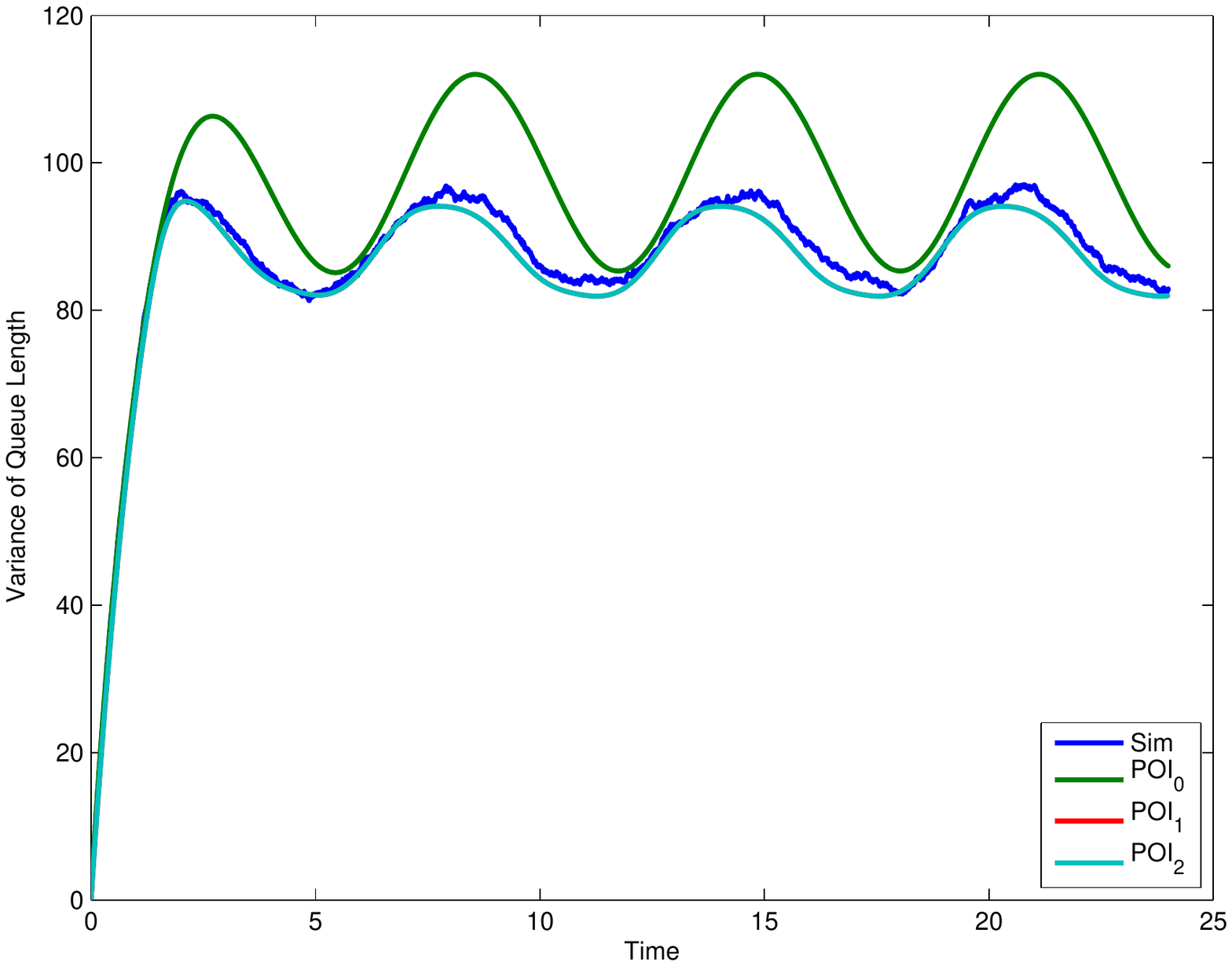}
\vspace{-1.4in}
\\
\vspace{.5in}
 \hspace{-.75in}~\includegraphics[scale =
.5]{./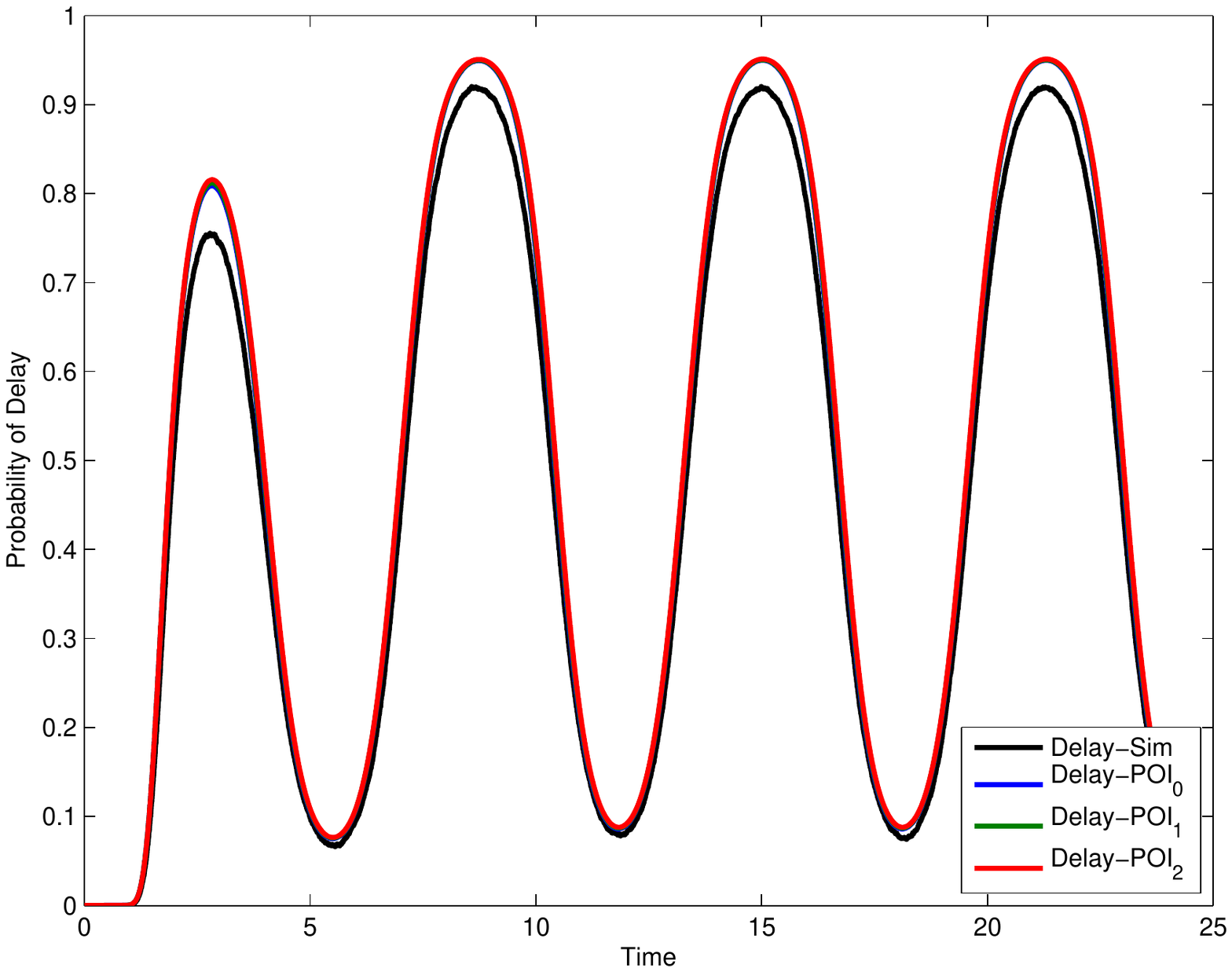}~\hspace{-.75in}~\includegraphics[scale =
.5]{./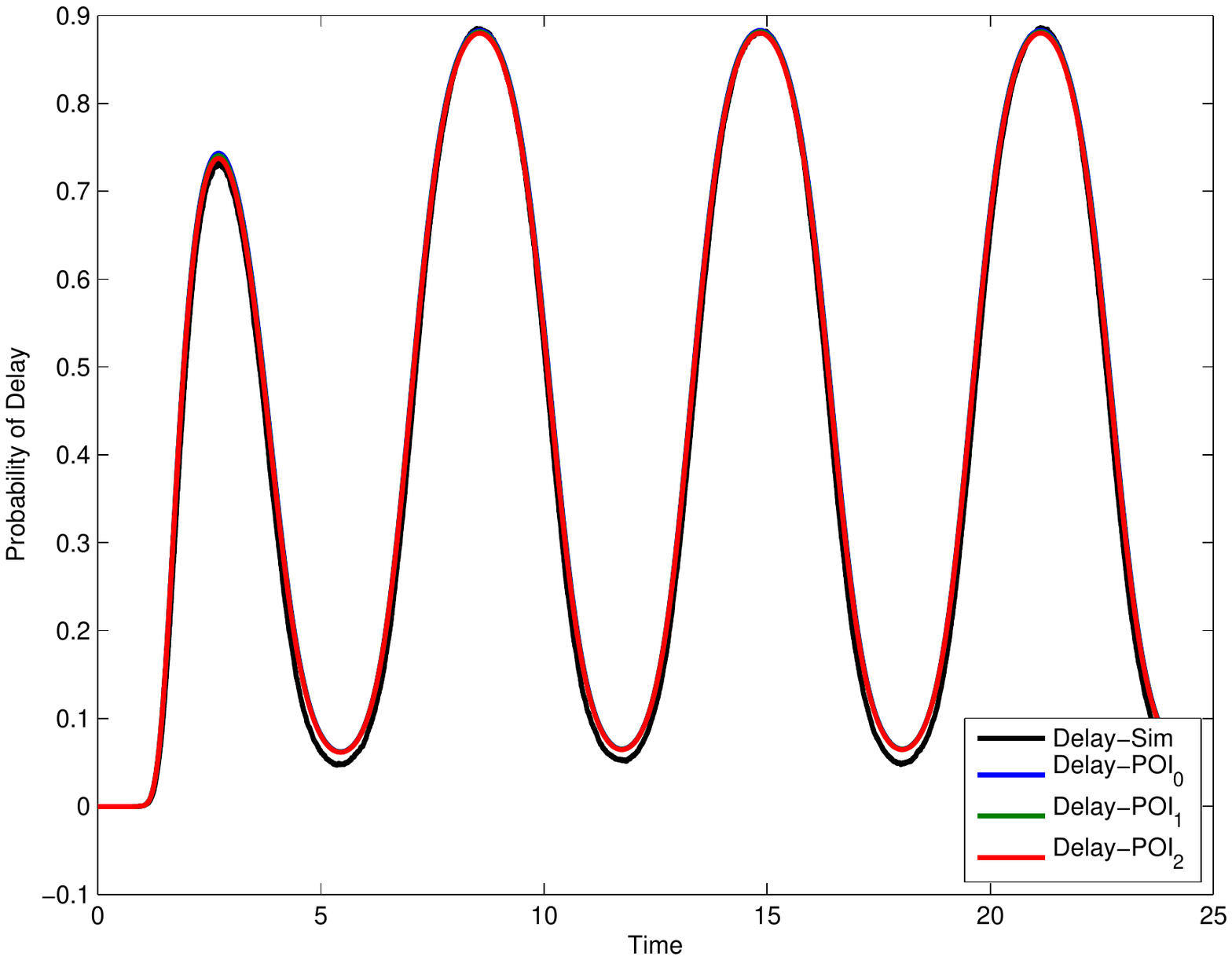} \vspace{-2in}
 \caption{ $ \lambda(t) = 100 + 20 \cdot \sin(t) $, \ $\mu = 1$, $\beta =.75$, $ c= 100$, $q(0) = 1$ (Left). \\
$ \lambda(t) = 100 + 20 \cdot \sin(t) $, \ $\mu = 1$, $\beta =1.25$, $ c= 100$, $q(0) = 1$ (Right). \\
 \label{Fig1}} 
\end{figure}

On the top left of Figure \ref{Fig2}, we plot the mean of the queueing model with the parameters give in the caption of Figure \ref{Fig2}.  This type of queueing model represents an system where customers are equally patient when compared to the mean service time.  In fact, this is equivalent to an infinite server queue.  We see that the zeroth and first order approximations are quite good at estimating the mean behavior of the queueing model.  On the middle left, of Figure \ref{Fig1} we plot the variance and we see that the zeroth and first order approximations are quite accurate, however, unlike Figure \ref{Fig1} the zeroth order approximation is as accurate as the first order approximation since the mean is equal to the variance as the Poisson distribution suggests.  Lastly on the bottom left of Figure \ref{Fig2} we plot the probability of delay of the queueing model and we see that both the zeroth order and the first order are very accurate at estimating its behavior.  On the right of Figure \ref{Fig2}, we have the Erlang-loss model where customers are turned away if too many customers are in the queue.  On the right of Figure \ref{Fig2}, we also see similar behavior for the mean, variance, and probability of delay to the left side for a different set of parameters, which are at the bottom of the caption in Figure \ref{Fig2}.  The zeroth and first order approximations are accurate at estimating the mean, variance, and probability of delay, except for the variance where the first order is much better at estimating its dynamics.  Our approximations of the Erlang-loss model indicate that we are able to estimate a variety of queueing and service system models with nonstationary and state dependent rates.

\begin{figure}[ht]
\captionsetup{justification=centering}
\vspace{-1.5in} \hspace{-.75in}~\includegraphics[scale =
.5]{./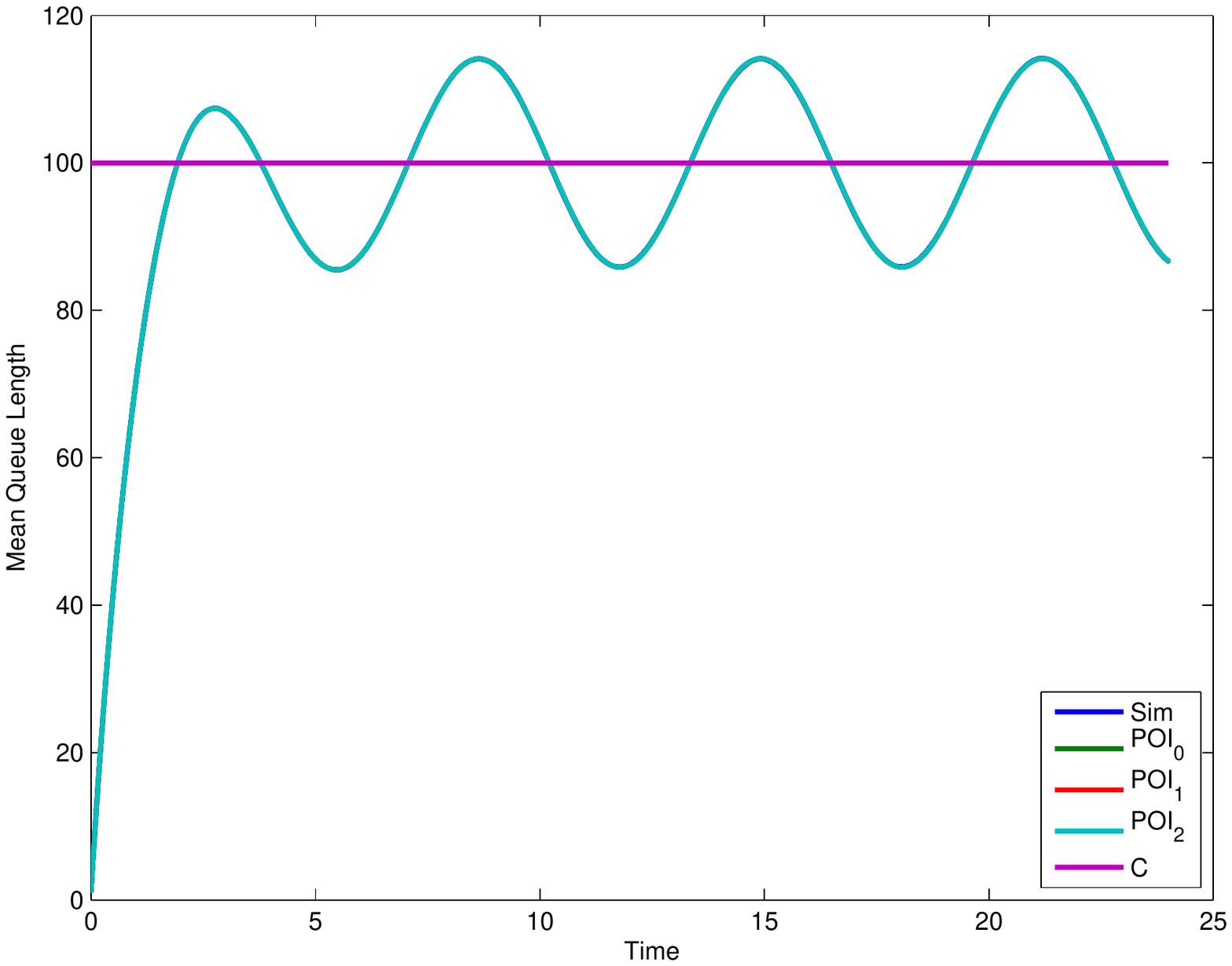}~\hspace{-.75in}~\includegraphics[scale =
.5]{./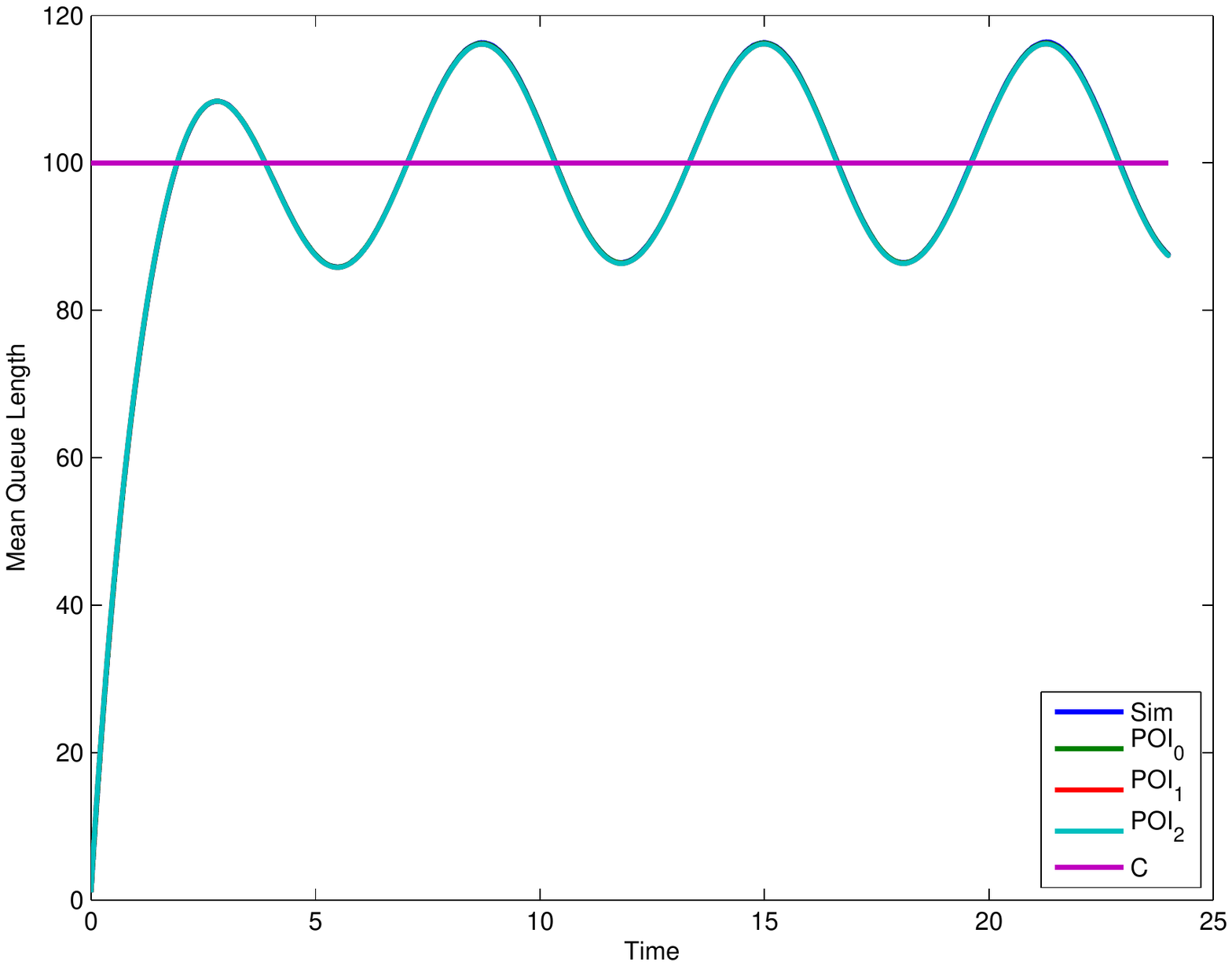}
\vspace{-3in}
\\
\vspace{-1.5in}
 \hspace{-.75in}~\includegraphics[scale =
.5]{./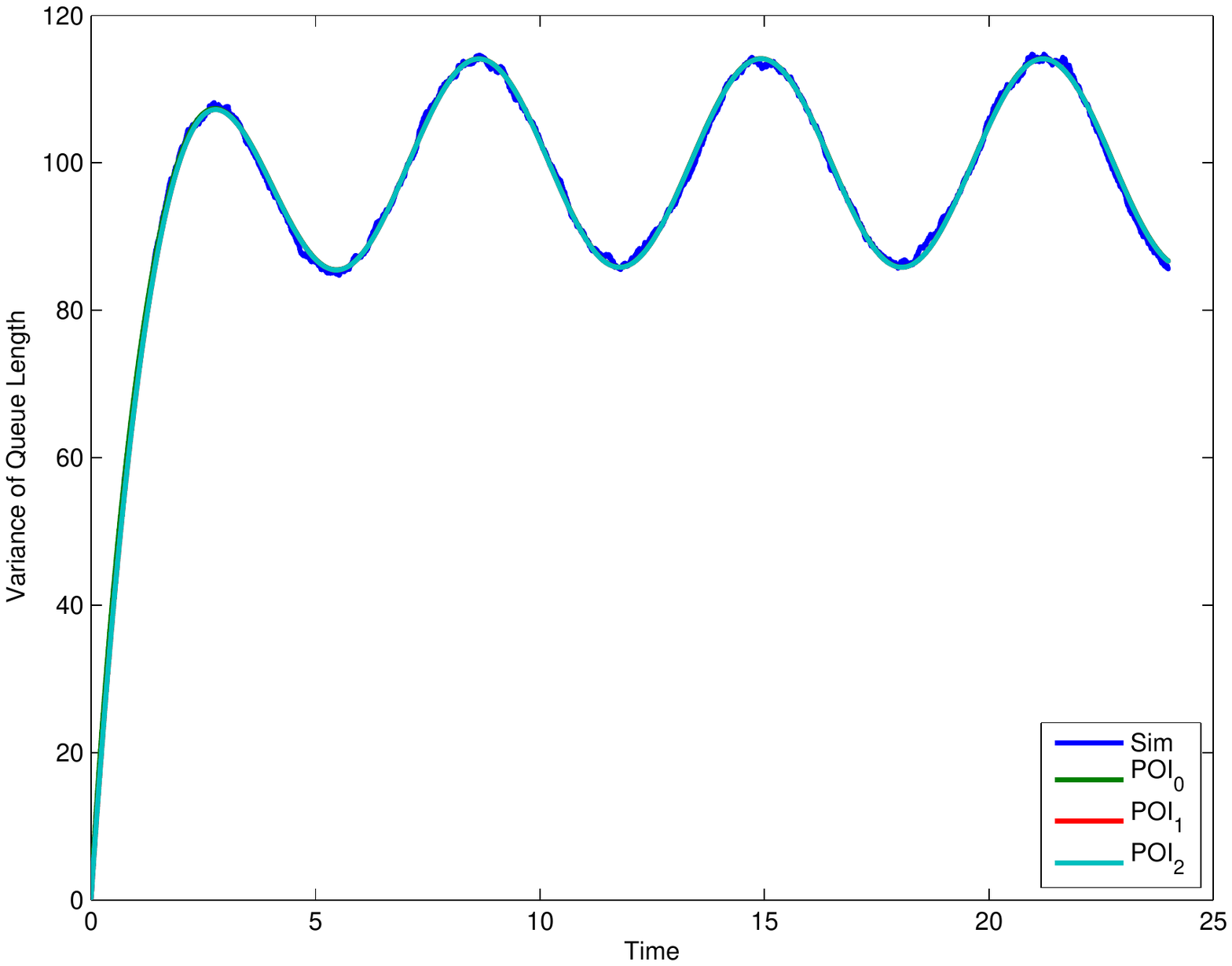}~\hspace{-.75in}~\includegraphics[scale =
.5]{./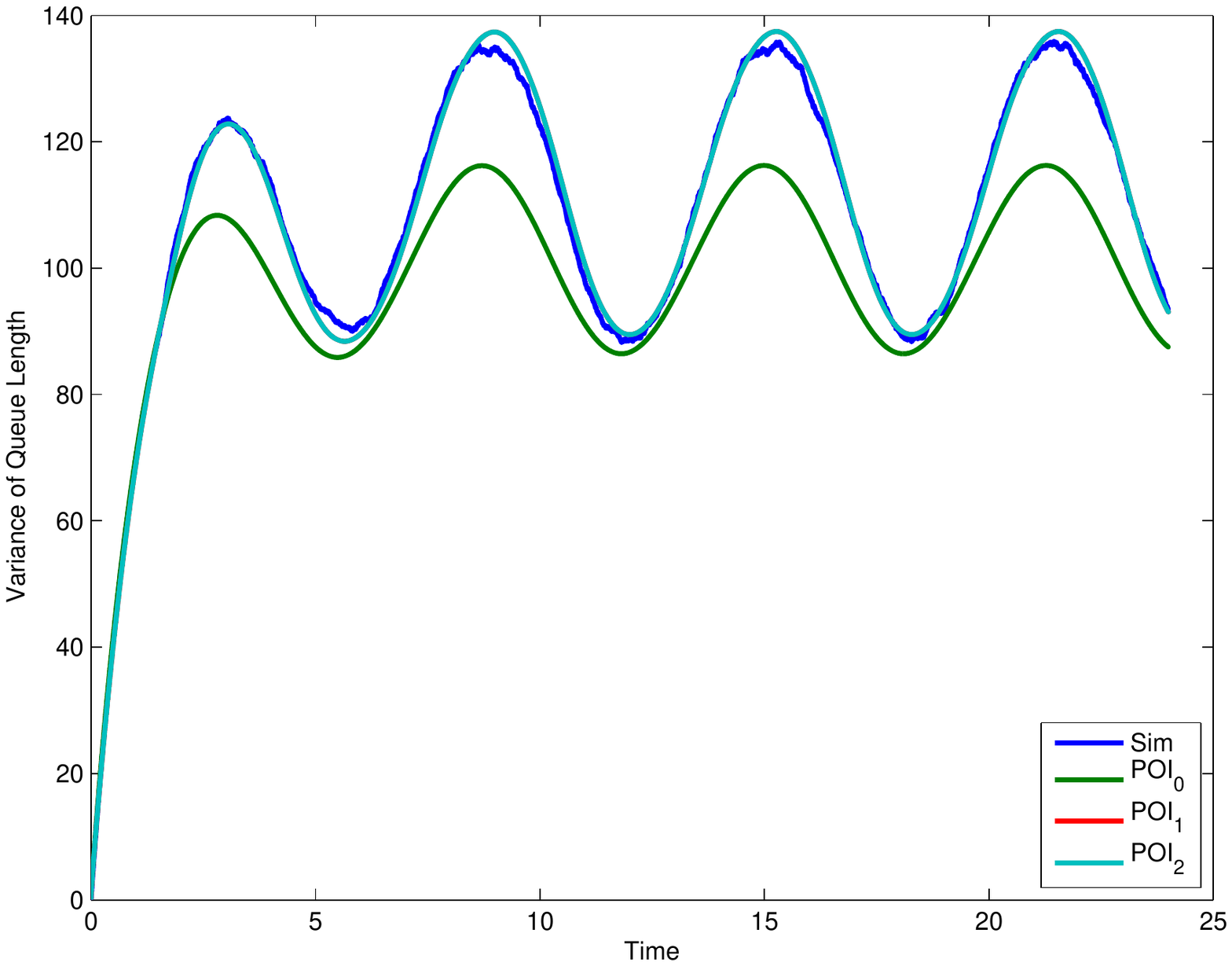}
\vspace{-1.4in}
\\
\vspace{.5in}
 \hspace{-.75in}~\includegraphics[scale =
.5]{./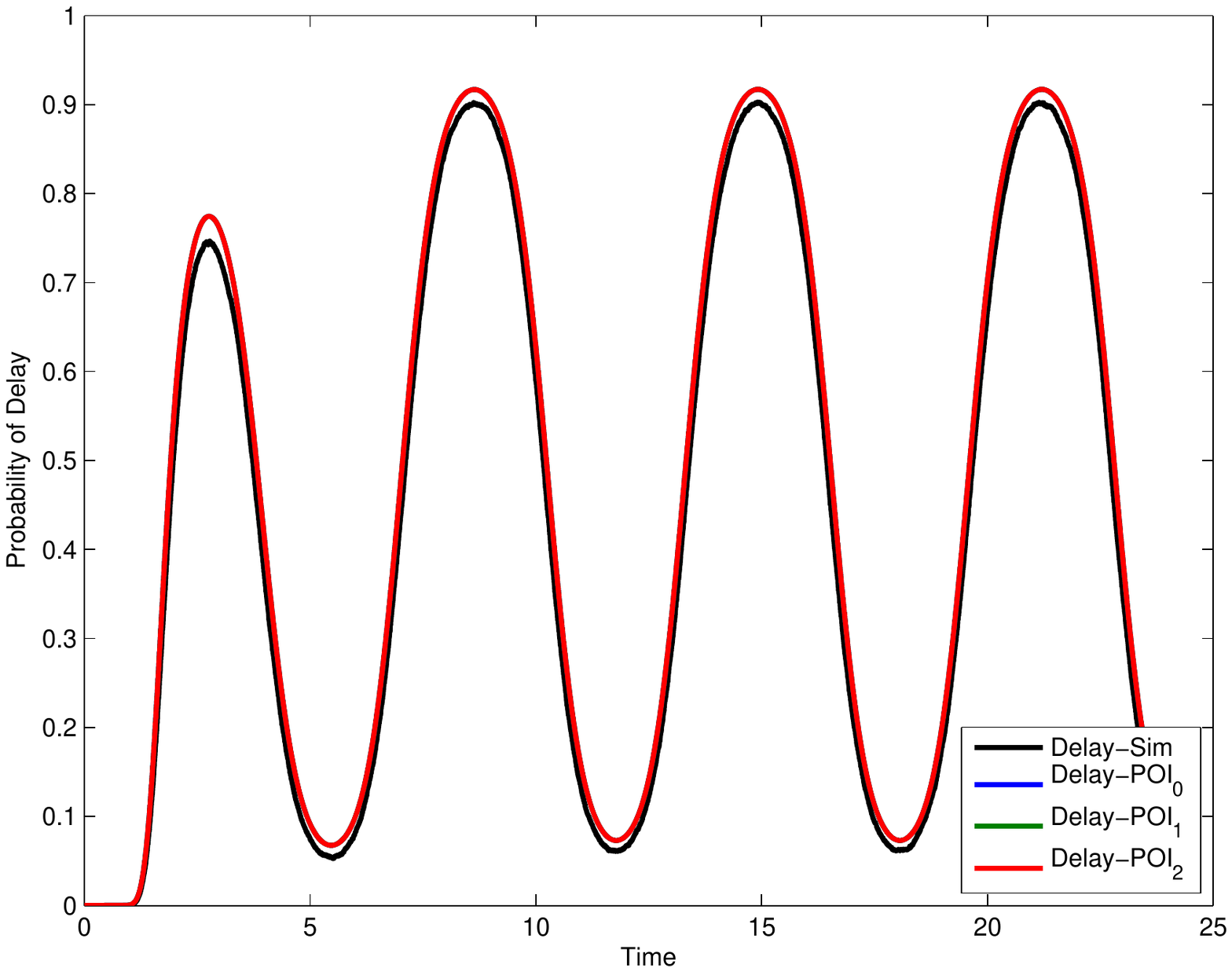}~\hspace{-.75in}~\includegraphics[scale =
.5]{./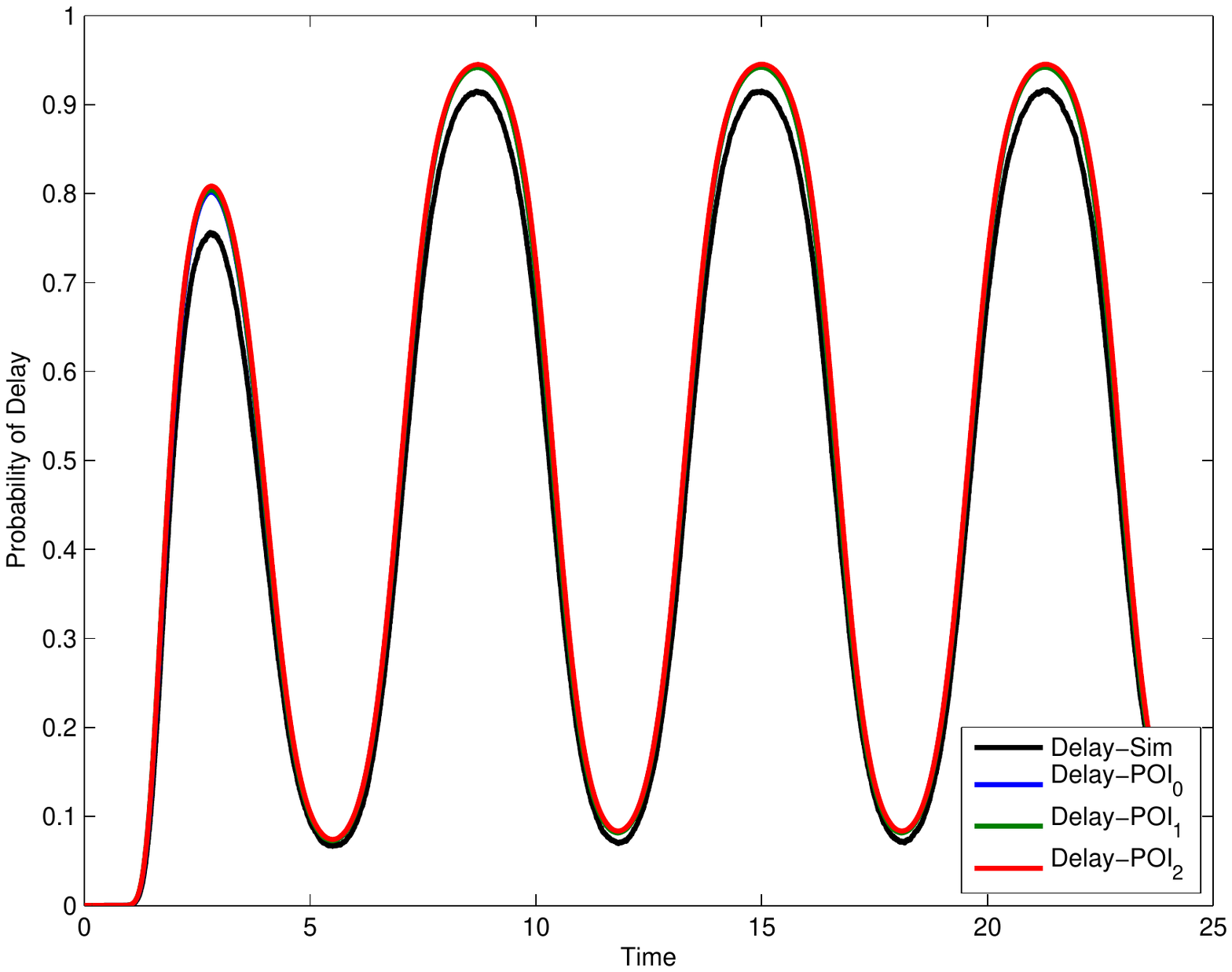} \vspace{-2in}
 \caption{$ \lambda(t) = 100 + 20 \cdot \sin(t) $, \ $\mu = 1$, $\beta =1$, $ c= 100$, $q(0) = 1$ (Left) \\
$ \lambda(t) = 100 + 20 \cdot \sin(t) $, \ $\mu = 1$, $\beta =.8$, $ c= 100$, $q(0) = 1$, $k = 50$ (Right) . \\
 \label{Fig2}}
\end{figure}



\section{Conclusion and Final Remarks}

In this paper, we have demonstrated that we can approximate a variety
of Markovian birth death processes with nonstationary and state
dependent rates. We have used a spectral approach that expands the
transition probabilities with the Poisson-Charlier polynomials, which
are orthgonal to the Poisson distribution. We have also proven that as
we add more terms to the truncated expansion, our approximations
converge to the true stochastic process. We gave explicit error bounds
on the convergence rate not only for the transition probabilities, but
also for the moments of the birth-death process.

There are many new problems that emerge from our work. One obvious,
but non-trivial extension to our results that we intend to pursue is
the multidimensional setting, where many individual birth-death
processes interact with one another in a more complex network. This
would involve the multi-dimensional analogue of the Poisson-Charlier
polynomials. In the context of operations research and queueing theory
problems, this extension would not only provide new approximations for
Jackson networks, but also it would allow us to approximate some
non-Markovian queueing networks that can be modeled with phase type
distributions. Moreover, if we were also able to prove error bounds
for our approximations, it would give insight into how close some
non-Markovian systems are to the Poisson reference distribution and
what parameters affect this closeness.


\section*{Acknowledgment}

S.~Engblom was supported by the Swedish Research Council and the
research was carried out within the Linnaeus centre of excellence
UPMARC, Uppsala Programming for Multicore Architectures Research
Center.


\appendix

\section{Appendix}

\subsection{Brief Review of Poisson Distribution and Properties}

It is important to know how close our distribution is to the Poisson distribution.  The Chen Stein method can help in our understanding of how close our queueing process is to the Poisson distribution.

\begin{theorem}[Chen-Stein]
Let Q be a random variable with values in $\mathbb{N}$.  Then, Q has the Poisson distribution with mean rate $q$ if and only if, for every bounded function $f: \mathbb{N} \to \mathbb{N}$,
\begin{equation}
\mathbb{E}\left[ Q \cdot f(Q) \right] = q \cdot \mathbb{E}\left[  f(Q+1) \right]
\end{equation}
\begin{proof}
See \cite{PT}.
\end{proof}
\end{theorem}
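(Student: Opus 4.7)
The plan is to prove both implications by direct manipulation of the Poisson mass function, with no need to invoke deeper Stein-theoretic machinery. Both directions are elementary once the right computation or test function is identified.

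For the forward direction, assuming $Q \sim \mathrm{Poisson}(q)$, I would write
\begin{align*}
  \mathbb{E}[Q \cdot f(Q)] = \sum_{k=0}^{\infty} k \cdot f(k) \cdot \frac{e^{-q} q^{k}}{k!},
\end{align*}
observe that the $k = 0$ term vanishes, cancel the factor $k$ against $k!$, shift the summation index to $j = k-1$, and factor out a single power of $q$. The remaining sum is then manifestly $q \cdot \mathbb{E}[f(Q+1)]$. Absolute convergence, and hence the validity of the reindexing, follows immediately from boundedness of $f$ together with the summability of the Poisson weights.

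For the reverse direction, the key idea is to specialize the hypothesized identity to the indicator test function $f(n) = \mathbf{1}_{\{n = k\}}$ for each fixed $k \ge 1$. Substituting this into both sides collapses the identity to the one-step recurrence
\begin{align*}
  k \cdot \mathbb{P}(Q = k) = q \cdot \mathbb{P}(Q = k-1),
\end{align*}
which is solved explicitly by $\mathbb{P}(Q = k) = (q^{k}/k!) \cdot \mathbb{P}(Q = 0)$. Enforcing $\sum_{k \ge 0} \mathbb{P}(Q = k) = 1$ then forces $\mathbb{P}(Q = 0) = e^{-q}$, which identifies $Q$ as $\mathrm{Poisson}(q)$.

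The only technicalities are to check that the indicator lies in the admissible class (trivial, since it is bounded and $\{0,1\}$-valued, hence $\mathbb{N}$-valued) and that the reindexing in the forward step is legitimate (immediate by absolute convergence). There is no real obstacle; the substance of the theorem is the \emph{structural} observation that this single integration-by-parts–type identity, ranging over all bounded $f$, already pins down the Poisson law.
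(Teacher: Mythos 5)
Your proposal is correct and complete, but it is worth noting that the paper offers no argument of its own here: its ``proof'' is simply a citation to the reference \cite{PT}, so you have supplied the standard self-contained argument that the paper leaves to the literature. Both of your directions are sound. The forward direction is the classical index-shift computation
\begin{align*}
  \sum_{k \ge 1} k f(k) \frac{e^{-q} q^{k}}{k!}
  = q \sum_{j \ge 0} f(j+1) \frac{e^{-q} q^{j}}{j!},
\end{align*}
with absolute convergence guaranteed since $f$ is bounded and the Poisson law has a finite mean, exactly as you say. The reverse direction via the test functions $f = \mathbf{1}_{\{n = k\}}$ is the right move: it collapses the identity to the recurrence $k \cdot \mathbb{P}(Q = k) = q \cdot \mathbb{P}(Q = k-1)$, whose unique normalized solution is the Poisson law, and you correctly check the small but necessary point that the indicator is admissible under the paper's (somewhat oddly stated) class of bounded functions $f: \mathbb{N} \to \mathbb{N}$, since it is $\{0,1\}$-valued. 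Note also that with $f$ an indicator, $Q \cdot f(Q)$ is itself bounded, so both sides of the hypothesized identity are automatically finite for an arbitrary $\mathbb{N}$-valued $Q$, which quietly disposes of any integrability worry in the ``if'' direction. What your approach buys over the paper's citation is transparency: the same characterization argument (Stein identity plus indicator test functions forcing a one-step recurrence) is the template for Stein characterizations of other discrete distributions, and having it written out makes clear that nothing beyond elementary series manipulation is involved.
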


Another important quantity in our calculations for the explicit approximations that are to follow is the incomplete gamma function.

\begin{lemma}
\begin{equation*}
 \Gamma(c,x)  = \sum^{\infty}_{m=c+1} e^{-x} \cdot \frac{x^m}{m!} = \frac{1}{\Gamma(c)} 
\int^{x}_{0} e^{-y} y^{c-1} dy
\end{equation*}
\begin{equation*}
\overline{\Gamma}(c,x)= \sum^{c}_{m=0} e^{-x} \cdot \frac{x^m}{m!} = 
\frac{1}{\Gamma(c)} \int^{\infty}_{x} e^{-y} y^{c-1} dy.
\end{equation*}

\begin{proof}
See \cite{JLZ}.  
\end{proof}
\end{lemma}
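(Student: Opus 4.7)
These are the two classical identities linking a Poisson CDF (or its tail) to the incomplete Gamma integral, so the proof is elementary. My plan is to prove the second identity and derive the first from it by complementation, since the two sums on the left partition the mass of a Poisson$(x)$ random variable (they add to $1$ over $m \ge 0$) and the two integrals on the right partition $\Gamma(c)$.

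The slickest route is probabilistic. Let $\{N(t)\}_{t \ge 0}$ be a unit-rate Poisson process and let $S_{k}$ be its $k$-th jump time; $S_{k}$ is Erlang$(k,1)$-distributed with density $y^{k-1} e^{-y}/(k-1)!$ on $(0,\infty)$. By construction of the Poisson process, $\{N(x) \le c\}$ and $\{S_{c+1} > x\}$ are the same event, and equating the two probabilities directly yields
\begin{equation*}
\sum_{m = 0}^{c} e^{-x} \frac{x^{m}}{m!} \;=\; \mathbb{P}(N(x) \le c) \;=\; \mathbb{P}(S_{c+1} > x) \;=\; \frac{1}{c!}\int_{x}^{\infty} y^{c} e^{-y}\,dy,
\end{equation*}
which is the second identity (modulo aligning the Gamma-normalization convention used in the statement). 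The first identity then follows by subtracting both sides from $1$ and using that $\int_{0}^{x} + \int_{x}^{\infty}$ recovers $\Gamma(\cdot)$.

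For a purely analytic alternative I would instead differentiate both sides in $x$. Writing $F(x) := \sum_{m = 0}^{c} e^{-x} x^{m}/m!$ and $G(x) := \tfrac{1}{c!}\int_{x}^{\infty} e^{-y} y^{c}\,dy$, a short telescoping calculation gives $F'(x) = -e^{-x} x^{c}/c!$, while the fundamental theorem of calculus gives $G'(x) = -e^{-x} x^{c}/c!$. Since $F(\infty) = G(\infty) = 0$, the two functions agree everywhere. The hard part, such as it is, is purely clerical: the indices and normalization as written in the lemma differ from the canonical form by a shift, so a brief index reconciliation is needed before either argument applies term-for-term.
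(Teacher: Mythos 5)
Your proof is correct, but it is worth noting that the paper does not prove this lemma at all: its entire ``proof'' is a citation to \cite{JLZ}, so any self-contained argument is by definition a different route. Both of your arguments are sound. The probabilistic one (identifying $\{N(x) \le c\}$ with $\{S_{c+1} > x\}$ for a unit-rate Poisson process with Erlang jump times) is the standard conceptual explanation of the Poisson--Gamma duality, and your analytic fallback is airtight: the telescoping computation $F'(x) = -e^{-x}x^{c}/c!$, matched against $G'(x)$ from the fundamental theorem of calculus together with $F(\infty) = G(\infty) = 0$, pins down equality with no probabilistic machinery. Your complementation step for the first identity is also fine, since $\tfrac{1}{c!}\int_{0}^{\infty} y^{c}e^{-y}\,dy = 1$.

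One point you flagged as ``clerical'' deserves to be made explicit, because it is more than a convention mismatch: the identities as printed in the lemma are actually off by one and do not hold as stated. With the normalization $\tfrac{1}{\Gamma(c)}\int_{0}^{x} e^{-y}y^{c-1}\,dy = \mathbb{P}(S_{c} \le x)$ one gets $\sum_{m=c}^{\infty} e^{-x}x^{m}/m!$, not $\sum_{m=c+1}^{\infty}$; the sums as written (which match how the paper uses $\Gamma(\cdot,\cdot)$ in its Appendix derivations, e.g.\ $E_{0}[(Q-c)^{+}]$) instead correspond to
\begin{equation*}
  \sum_{m=c+1}^{\infty} e^{-x}\frac{x^{m}}{m!} = \frac{1}{\Gamma(c+1)}\int_{0}^{x} e^{-y}y^{c}\,dy,
  \qquad
  \sum_{m=0}^{c} e^{-x}\frac{x^{m}}{m!} = \frac{1}{\Gamma(c+1)}\int_{x}^{\infty} e^{-y}y^{c}\,dy,
\end{equation*}
exactly what your Erlang argument produces. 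So your proof establishes the corrected form of the lemma; the ``index reconciliation'' you defer is really a correction to the statement, and since the paper offers no proof of its own, your writeup is the only place this discrepancy would surface.
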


\begin{proposition}
If Q is a Poisson random variable with rate $q > 0$, then we have the following expression for the central moments of $Q$
\begin{equation}
E \left[(Q- q)^{m+1} \right] = q \cdot \sum^{}_{} {m \choose j } \cdot E \left[(Q-q)^j \right], \quad \mathrm{for} \ \mathrm{all} \ n \in \mathbb{N}
\end{equation}

\begin{proof}
\begin{eqnarray}
E \left[(Q- q)^{m+1} \right]  &=& \sum^{\infty}_{k=0} (k - q)^{m+1} \cdot e^{-q} \cdot \frac{q^k}{k!} \\
&=& e^{-q} \cdot \sum^{\infty}_{k=0} (k - q)^{m+1} \cdot  \frac{q^k}{k!} \\
&=& e^{-q} \cdot \left( \sum^{\infty}_{k=1} (k - q)^{m} \cdot  \frac{q^k}{(k-1)!} - q \cdot \sum^{\infty}_{k=0} (k - q)^{m} \cdot  \frac{q^k}{k!}  \right) \\
&=& e^{-q} \cdot q \cdot \sum^{\infty}_{k=0} \left(  (k+1 - q)^{m} - (k -q)^m \right) \cdot  \frac{q^k}{k!}  \\
&=& e^{-q} \cdot q \cdot \sum^{\infty}_{k=0} \left(  \sum^{m-1}_{j=0} {m \choose j } \cdot (k - q)^j \right) \cdot  \frac{q^k}{k!}  \\
&=& e^{-q} \cdot q \cdot \sum^{m-1}_{j=0} {m \choose j } \left(  \sum^{\infty}_{k=0} (k - q)^j \cdot  \frac{q^k}{k!}  \right) \\
&=&  q \cdot \sum^{m-1}_{j=0} {m \choose j } E \left[(Q-q)^j \right] 
\end{eqnarray}
\end{proof}

\end{proposition}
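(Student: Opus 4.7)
The plan is to derive this recursion as a direct consequence of the Chen--Stein identity stated earlier in the Appendix, which is the most natural tool since it characterizes the Poisson distribution precisely through a relation of the form $E[Q \cdot f(Q)] = q \cdot E[f(Q+1)]$. The alternative would be the brute-force PMF calculation (expand $(k-q)^{m+1}$ against $e^{-q} q^{k}/k!$ and shift the summation index), but invoking Chen--Stein keeps the proof short and conceptually clean.

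First I would take $f(Q) = (Q-q)^{m}$ in the Chen--Stein identity to obtain
\begin{equation*}
  E[Q \cdot (Q-q)^{m}] \;=\; q \cdot E[(Q+1-q)^{m}].
\end{equation*}
Next I would rewrite the left-hand side using the trivial algebraic identity $Q \cdot (Q-q)^{m} = (Q-q)^{m+1} + q \cdot (Q-q)^{m}$, which yields
\begin{equation*}
  E[(Q-q)^{m+1}] \;=\; q \cdot E[(Q+1-q)^{m}] - q \cdot E[(Q-q)^{m}].
\end{equation*}
Then I would expand $(Q+1-q)^{m} = \sum_{j=0}^{m} \binom{m}{j} (Q-q)^{j}$ by the binomial theorem, take expectations termwise (all terms are integrable since the Poisson distribution has finite moments of every order), and observe that the $j = m$ term of the resulting sum, namely $q \cdot E[(Q-q)^{m}]$, exactly cancels the subtracted term. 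What remains is
\begin{equation*}
  E[(Q-q)^{m+1}] \;=\; q \cdot \sum_{j=0}^{m-1} \binom{m}{j} \cdot E[(Q-q)^{j}],
\end{equation*}
which matches the claimed identity (with the sum running over $0 \le j \le m-1$, as is clear from the argument).

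There is no real obstacle in this proof: the Chen--Stein identity does all the heavy lifting, and the rest is just one algebraic rearrangement and one application of the binomial theorem. The only thing to double-check is the convergence of the sums and exchange of sum and expectation, but this is automatic because the Poisson distribution has moment generating function everywhere, so $E[|Q-q|^{k}] < \infty$ for every $k$, and only finitely many terms appear in the binomial expansion. A minor bookkeeping point is to confirm that the upper index in the claimed formula should indeed terminate at $m-1$ rather than $m$; this is forced by the cancellation above and should be clarified in the statement.
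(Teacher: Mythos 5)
Your proof is correct, and it is essentially the paper's computation repackaged through the Chen--Stein lemma rather than carried out at the level of the probability mass function. The paper writes $E[(Q-q)^{m+1}]$ as an explicit sum, splits $(k-q)^{m+1} = k(k-q)^m - q(k-q)^m$, and then shifts the summation index via $k \cdot q^k/k! = q \cdot q^{k-1}/(k-1)!$ to obtain $e^{-q} q \sum_{k \ge 0} \bigl( (k+1-q)^m - (k-q)^m \bigr) q^k/k!$, before applying the binomial theorem with cancellation of the $j = m$ term. That index shift is precisely the identity $E[Q f(Q)] = q\, E[f(Q+1)]$, so your invocation of Chen--Stein with $f(Q) = (Q-q)^m$, your split $Q(Q-q)^m = (Q-q)^{m+1} + q(Q-q)^m$, and your binomial expansion with the $j = m$ cancellation map one-to-one onto the paper's displayed steps. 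What your version buys is modularity and brevity: since the Chen--Stein identity is already stated as a theorem in the same appendix (and is used there for the other rate-function computations), citing it makes this proposition a three-line corollary and keeps the appendix internally consistent, at the cost of being less self-contained than the paper's direct summation. Your bookkeeping remarks are also on target: the statement's displayed sum is missing its limits and should read $\sum_{j=0}^{m-1} \binom{m}{j} E[(Q-q)^j]$, as both your argument and the paper's final line confirm (and the quantifier should be over $m$, not the undeclared $n$); your observation that integrability and termwise expectation are automatic, since the Poisson distribution has finite moments of all orders and the binomial expansion is finite, is correct and is silently assumed in the paper.
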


\subsubsection{Touchard Polynomials and Relation to Poisson Moments}

\begin{lemma}\label{Poisson_Moments}
The moments of Poisson random variables have the following expressions in terms of Touchard polynomials $T_{k}$
\begin{eqnarray*}
E[Q^k] &=&  \sum^{\infty}_{m=0} m^k \cdot e^{-q} \cdot \frac{q^m}{m!} = T_{k}.
\end{eqnarray*}
In fact the first six Touchard polynomials have the following form
\begin{eqnarray*}
T_1 &=& q, \quad T_2 = q^2 + q, \quad   T_3 = q^3 + 3 \cdot q^2 + q, \quad  T_4 = q^4 + 6 \cdot q^3 + 7 \cdot q^2 + q  \\
T_5 &=&  q^5 + 10 \cdot q^4 + 25 \cdot q^3 + 15 \cdot q^2 + q, \quad T_6 = q^6 + 10 \cdot q^5 + 25 \cdot q^4 + 15 \cdot q^3 + q^2 + q.
\end{eqnarray*}
\begin{proof}
This follows from the definintion of the Touchard polynomials.  See for example \cite{PT}.
\end{proof}
\end{lemma}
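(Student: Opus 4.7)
The plan is to prove the identity $E[Q^{k}] = T_{k}(q)$ by combining the Chen-Stein characterization of the Poisson distribution (stated just above this lemma) with the standard recursion that defines the Touchard polynomials. The main task is therefore to recognize that both sequences satisfy the same one-step recurrence and the same initial condition, so they coincide.

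First, I would recall that the Touchard polynomials are characterized by $T_{0}(q) = 1$ together with the recurrence
\begin{equation*}
  T_{n+1}(q) = q \sum_{k=0}^{n} \binom{n}{k} T_{k}(q),
\end{equation*}
which is equivalent to saying that their exponential generating function is $\exp(q(e^{t}-1))$. I would then apply the Chen-Stein identity with the (bounded truncations of the) test function $f(Q) = Q^{n}$, writing
\begin{equation*}
  E[Q^{n+1}] = E[Q \cdot Q^{n}] = q \cdot E[(Q+1)^{n}] = q \sum_{k=0}^{n} \binom{n}{k} E[Q^{k}]
\end{equation*}
by the binomial theorem. Setting $m_{k} := E[Q^{k}]$, this is exactly the Touchard recurrence, and since $m_{0} = 1 = T_{0}(q)$, an induction on $k$ yields $m_{k} = T_{k}(q)$ for every $k \ge 0$, proving the main equality of the lemma.

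For the explicit expressions of $T_{1}$ through $T_{6}$, I would simply iterate the recurrence (or, equivalently, differentiate the generating function $\exp(q(e^{t}-1))$ successively at $t = 0$). The coefficients that appear are the Stirling numbers of the second kind $S(k,j)$, giving the closed form $T_{k}(q) = \sum_{j=0}^{k} S(k,j)\, q^{j}$, from which the six listed polynomials follow by direct computation.

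The only subtle point is that Chen-Stein was stated for bounded $f$, whereas $f(Q) = Q^{n}$ is unbounded. I would handle this by a standard truncation argument: apply the identity to $f_{M}(Q) = Q^{n}\mathbf{1}_{\{Q \le M\}}$ and pass to the limit $M \to \infty$, using the fact that all moments of a Poisson variable are finite so monotone/dominated convergence applies on both sides. This is the only non-cosmetic step, and once it is in place the rest is just the recurrence-matching argument above.
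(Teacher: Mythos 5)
Your proof is correct, but it takes a genuinely different route from the paper, whose entire proof is the single sentence ``This follows from the definition of the Touchard polynomials,'' plus a citation. In the paper the identity is essentially definitional: the authors introduce $T_k$ \emph{as} the Poisson moment sum $\sum_m m^k e^{-q} q^m/m!$ and only afterwards record the Stirling-number form $T_m(q)=\sum_{j} S(m,j)\,q^j$, so there is nothing for them to verify. You instead prove that two independent characterizations agree: you take the combinatorial definition of the Touchard polynomials via the recurrence $T_{n+1}(q)=q\sum_{k=0}^{n}\binom{n}{k}T_k(q)$, derive the identical recurrence for the Poisson moments from the Chen--Stein identity (stated in the same appendix) together with the binomial theorem, and close the induction from $m_0=1=T_0$. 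This buys a self-contained argument using only tools already present in the paper, and your truncation step for unbounded $f(Q)=Q^n$ addresses a point the paper silently ignores (Chen--Stein is stated there only for bounded $f$), so your version is strictly more rigorous. One caveat: your final claim that the six listed polynomials ``follow by direct computation'' is not quite right as applied to the paper's display. Computing via $T_k(q)=\sum_j S(k,j)q^j$ confirms $T_1$ through $T_5$, but the correct sixth polynomial is $T_6 = q^6 + 15q^5 + 65q^4 + 90q^3 + 31q^2 + q$ (since $S(6,2)=31$, $S(6,3)=90$, $S(6,4)=65$, $S(6,5)=15$), whereas the paper's stated $T_6 = q^6 + 10q^5 + 25q^4 + 15q^3 + q^2 + q$ merely recycles the coefficients of $T_5$ and is a typo; your computational approach is exactly what exposes this, so you should flag the discrepancy rather than endorse the printed formula.
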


Moreover, the Touchard polynomials are also defined by the following expression
\begin{eqnarray*}
T_m(q)&=& \sum^{m}_{j=1} S(n,j) \cdot q^j =  \sum^{m}_{j=1} \left\{\begin{matrix} n \\ j \end{matrix}\right\} q^j 
\end{eqnarray*}
where S(n,j) is a Stirling number of the second knd the measures the number of partitions of a set that has $n$ elements and is to be separated into $j$ disjoint non-empty subsets.  

\subsection{Poisson-Charlier Polynomials}

In this section, we describe how to use Poisson-Charlier polynomials
in conjuction with the functional forward equations in order to
construct approximations for our nonstationary queueing processes.
The Poisson-Charlier polynomials are an orthogonal polynomial sequence
with respect to the Poisson distribution with rate $a$ i.e
\begin{equation}
\omega( x ) = e^{- a} \frac{ a^x}{x!} \quad \quad x = 0,1,2,.....
\end{equation}
As a result, the Poisson-Charlier polynomials solve the following recurrence relation

\begin{equation}
C_{n+1}^a(x) = ( x - n - a ) \cdot C_n^a(x) - n \cdot \alpha \cdot C_{n-1}^a( x).
\end{equation}
The first four unnormalized Poisson-Charlier polynomials are defined as 
\begin{eqnarray}
 C_{0}^a(x) &=& 1 \\
 C_{1}^a(x) &=& x - a \\
 C_{2}^a(x) &=& x^2 - 2\cdot x \cdot  a +  a^2 - x \\
 C_{3}^a(x) &=&  x^3 - 3 \cdot (  a +1) \cdot x^2 +  (3 \cdot a^2 + 3 \cdot  a + 2) \cdot x  -  a^3 .
\end{eqnarray}
Now suppose that we have a function f(x), which is defined on the integers and satifies the inequality 
\begin{equation}
\sum^{\infty}_{x=0} f^2(x) \cdot \omega(a, x) < \infty, \quad \mathrm{for} \ \mathrm{some} \ a > 0.
\end{equation}
Then we have the following expansion in terms of Poisson-Charlier polynomials in the Hilbert space $l^2(\mathbb{N}, \omega( a, x) )$.

\begin{proposition}
Any function f(x) $\in l^2(\mathbb{N}, \omega(a, x) )$ can be expanded into a Poisson-Charlier series i.e.
\begin{equation}
 f(x) = \sum^{\infty}_{x=0} c_j \cdot C_{j}^a(x)
\end{equation}
where $c_j =  \sum^{\infty}_{x=0} f(x) C_{j}^a(x) \omega(a, x) $.  
\end{proposition}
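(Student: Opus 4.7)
The plan is to deduce this from the standard Fourier-expansion theorem in Hilbert space: once the Poisson-Charlier polynomials $\{C_j^a\}$ are known to form a \emph{complete} orthonormal system in $\ell^2(\mathbb{N}, \omega(a,\cdot))$, every $f$ in the space admits the stated expansion $f = \sum_j c_j C_j^a$ with $c_j = (f, C_j^a)_\omega$, converging in the weighted $\ell^2$-norm. Pointwise convergence at each $x$ is then automatic, because $\omega(x) > 0$ makes point-evaluation a bounded functional on $\ell^2(\omega)$. Orthonormality has already been recorded in the paper, so the only substantive task is completeness.

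The first step I would take is to note that since $\deg C_j^a = j$ (visible directly from the three-term recurrence), $\mathrm{span}\{C_0^a,\ldots,C_N^a\}$ coincides with $\Charp{N}$, the polynomials of degree $\le N$. Completeness of the Charlier system is therefore equivalent to density of polynomials in $\ell^2(\mathbb{N},\omega)$, which by Hilbert-space duality reduces to the following statement: if $f \in \ell^2(\omega)$ satisfies $(f, x^k)_\omega = 0$ for every $k \ge 0$, then $f \equiv 0$.

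The second step carries this out by a generating-function / moment-determinacy argument. Define
\begin{align*}
\phi(z) \equiv \sum_{x=0}^{\infty} f(x)\omega(x) z^{x}.
\end{align*}
Cauchy-Schwarz in $(\cdot,\cdot)_\omega$ gives
\begin{align*}
|\phi(z)| \le \|f\|_{\ell^{2}(\omega)} \left( \sum_{x \ge 0} \omega(x) |z|^{2x} \right)^{1/2} = \|f\|_{\ell^{2}(\omega)} \exp\!\bigl( a(|z|^{2}-1)/2 \bigr),
\end{align*}
so $\phi$ extends to an entire function on $\mathbb{C}$. The orthogonality hypothesis forces
\begin{align*}
\phi^{(k)}(1) = \sum_{x \ge k} \ffactrl{x}{k} f(x) \omega(x) = 0 \quad \text{for all } k \ge 0,
\end{align*}
because $\ffactrl{x}{k}$ is a polynomial of degree $k$, and hence each such sum is a finite linear combination of the vanishing moments $(f,x^{j})_{\omega}$. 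An entire function whose full Taylor series at $z = 1$ is zero must vanish identically, so $f(x)\omega(x) = 0$ for every $x$, and strict positivity of $\omega$ gives $f \equiv 0$ as required.

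The main obstacle is precisely this moment-determinacy step, and I expect no trouble here: the super-exponential decay of $\omega(x) = e^{-a} a^{x}/x!$ is exactly what keeps $\phi$ entire, and entireness trivializes the Hamburger-style moment problem. By contrast, the analogous density result can fail for weights with only polynomial moment decay, so the use of a Poisson reference distribution is essential. With completeness in hand, the Fourier coefficients and $\ell^{2}(\omega)$-convergence of the expansion are routine, and the identification $c_{j} = (f,C_{j}^{a})_{\omega}$ is the usual consequence of orthonormality.
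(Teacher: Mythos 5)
Your proof is correct, and it is genuinely different from what the paper does: the paper offers no argument at all for this proposition, simply deferring to the reference \cite{OG}. You supply the missing substance. Your reduction is the right one --- orthonormality is already recorded in Section~3, $\deg C_j^a = j$ gives $\mathrm{span}\{C_0^a,\ldots,C_N^a\} = \Charp{N}$, so everything hinges on density of polynomials in $\Lspace^2(w;\,\Intdom_{+})$, which by Hilbert-space duality is the vanishing-moments statement you isolate. The determinacy step checks out in every detail: Cauchy--Schwarz gives $\sum_{x}\omega(x)|z|^{2x} = e^{a(|z|^2-1)}$, so $\phi(z) = \sum_x f(x)\omega(x)z^x$ is entire; $\phi^{(k)}(1) = \sum_{x \ge k} \ffactrl{x}{k} f(x)\omega(x)$ is a finite linear combination of the moments $(f,x^j)_\omega$, $j \le k$, hence zero; an entire function with vanishing Taylor series at $z=1$ is identically zero, and reading off the coefficients at $z=0$ gives $f(x)\omega(x)=0$ pointwise. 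Your observation that point evaluation is bounded (since $|f(x_0)| \le \omega(x_0)^{-1/2}\|f\|_{\Lspace^2(w;\,\Intdom_{+})}$) correctly upgrades norm convergence to the pointwise identity asserted in the proposition. What your approach buys is self-containedness and a transparent explanation of \emph{why} the Poisson reference works: the factorial decay of $\omega$ is exactly what makes the generating function entire and trivializes the moment problem, whereas for heavier-tailed discrete weights polynomial density can genuinely fail --- a point the paper never addresses. One cosmetic caveat: the proposition as stated in the paper's appendix lists the \emph{unnormalized} Charlier polynomials, for which the coefficient formula would need division by $\|C_j^a\|_w^2$; your argument implicitly uses the normalized family from Section~3, which is the reading under which the stated formula $c_j = (f,C_j^a)_w$ is correct.
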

\begin{proof}
See \cite{OG}.  
\end{proof}
\begin{remark}
  This expansion can also be extended to the case where the
  independent variable of the function f(k) is a stochastic process
  and also depends on time itself.
\end{remark}

\begin{lemma}
\begin{equation}
\sum^{\infty}_{x=0} \omega(a, x) \cdot C_{j}^a(x) = E[ C_{j}^a(x) ] = 0 \quad  \mathrm{for} \ \mathrm{all}  \ j \geq 1.
\end{equation}
\begin{proof}
This follows from the orthogonality of the Poisson-Charlier polynomials with constants, which is the zeroth order term.  
\end{proof}
\end{lemma}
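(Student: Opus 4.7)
The plan is to derive the identity as an immediate specialization of the orthogonality of the Poisson--Charlier family. Recall from the Appendix that $C_0^a(x) \equiv 1$ and that $\{C_j^a\}_{j \ge 0}$ is orthogonal with respect to the Poisson weight $\omega(a,\cdot)$, so there exist positive normalization constants $h_j$ with
\begin{equation*}
\sum_{x=0}^{\infty} \omega(a,x)\, C_j^a(x)\, C_k^a(x) = h_j\, \delta_{jk}.
\end{equation*}
First I would specialize to $k = 0$, which collapses the left-hand side to $\sum_{x \ge 0} \omega(a,x)\, C_j^a(x)$. Because any $j \ge 1$ satisfies $j \ne 0$, orthogonality then forces this sum to vanish. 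Recognizing the sum as $E[C_j^a(X)]$ with $X \sim \mathrm{Poisson}(a)$ yields both equalities in the statement.

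There is no real obstacle here: the content of the lemma is just the familiar observation that every non-constant member of an orthogonal polynomial family has vanishing mean under its orthogonality measure. If one wanted a derivation that does not invoke orthogonality as a black box, the generating-function identity $\sum_{n \ge 0} C_n^a(x)\, t^n/n! = e^{-at}(1+t)^x$ offers a one-line alternative: taking expectations against $\omega(a,\cdot)$ and using the Poisson moment generating function $E[(1+t)^X] = e^{at}$ for $X \sim \mathrm{Poisson}(a)$ gives
\begin{equation*}
\sum_{n=0}^{\infty} E[C_n^a(X)]\, \frac{t^n}{n!} = e^{-at} \cdot e^{at} = 1,
\end{equation*}
so that only the $n = 0$ coefficient is nonzero, and consequently $E[C_n^a(X)] = 0$ for every $n \ge 1$. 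Either route gives both equalities with essentially no calculation beyond what is already recorded in the Appendix.
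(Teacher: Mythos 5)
Your primary argument --- pairing $C_j^a$ with $C_0^a \equiv 1$ and invoking orthogonality of the Poisson--Charlier family against constants --- is exactly the paper's one-line proof, just spelled out more explicitly, and it is correct. The generating-function derivation you append is a valid independent check, but it adds nothing essential beyond the paper's approach.
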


\subsection{Extension: Erlang Loss Queue ($M_t/M_t/C_t/K_t + M_t$)}

\begin{eqnarray}
\nonumber
\updot{E}\sqparen{ f(Q) } &=&\lambda \cdot
E\sqparen{ \paren{f(Q+1)-f(Q)} \cdot \{ Q < c + k \}}\\ \nonumber
&&+\mu\cdot E\sqparen{\paren{Q\wedge c}\cdot\paren{f(Q-1)-f(Q)}} \\
\label{FOREQN}
&& +\beta \cdot E\sqparen{\paren{Q- c}^+\cdot\paren{f(Q-1)-f(Q)}},
\end{eqnarray}
for all integrable functions $f$ and $k(t)$ represents the number of
waiting spaces in the loss queue.

For the special cases of the mean and variance we have that
\begin{eqnarray}
\updot{E}[Q] &=&  \lambda \cdot E[\{ Q < c + k  \}] -  \mu \cdot E[Q \wedge c ] - \beta \cdot E[(Q-c)^+] \\  \nonumber
\updot{\mathrm{Var}}[Q] &=&  \lambda \cdot E[\{ Q < c + k  \}] + \mu \cdot E[Q \wedge c ] + \beta \cdot E[(Q-c)^+] \\
&&+ 2 \left ( \lambda \cdot \mathrm{Cov}[Q, \{ Q < c + k  \}  ] -\mu \cdot \mathrm{Cov}[Q, Q \wedge c  ] - \beta \cdot \mathrm{Cov}[Q, (Q-c)^+]    \right).
\end{eqnarray}

\begin{theorem}
 Under the zeroth order Poisson-Charlier approximation we have the following rate function values for the Erlang loss queueing model

\begin{eqnarray}
E[(Q-c)^+] &=&  a_0 \cdot \left[ q \cdot \Gamma(q, c-1) - c \cdot \Gamma(q, c)  \right]  \\ 
E[Q \wedge c] &=&  a_0 \cdot q - a_0 \cdot \left[ q \cdot \Gamma(q, c-1) - c \cdot \Gamma(q, c) \right ] \\
E[\{ Q < c + k \}] &=& a_0 \cdot \overline{\Gamma}(q,  c+k )
\end{eqnarray} 

Furthermore, under the first order Poisson-Charlier approximation we have the following rate function values for the Erlang-loss queueing model

\begin{eqnarray*}
E[(Q-c)^+] &=&  a_0 \cdot \left[ q \cdot \Gamma(q, c-1) - c \cdot \Gamma(q, c) \right]  \\  \nonumber
&&+ a_1 \cdot \left [ q^2 \cdot \Gamma(q, c-2) + q \cdot \Gamma(q, c-1) - q^2 \cdot \Gamma(q, c)  \right ] \\
E[Q \wedge c] &=&  (a_0 +a_1) \cdot q - a_0 \cdot \left[ q \cdot \Gamma(q, c-1) - c \cdot \Gamma(q, c) \right ]  \\  \nonumber
&&- a_1 \cdot \left [ q^2 \cdot \Gamma(q, c-2) + q \cdot \Gamma(q, c-1) - q^2 \cdot \Gamma(q, c)  \right ] \\
E[\{ Q < c + k \}] &=& a_0 \cdot \overline{\Gamma}(q,  c+k ) + a_1 \cdot  \left( q - q \cdot \Gamma(q, c+k-1) - q \cdot \Gamma(q, c+k) \right) \\
E[Q \cdot (Q-c)^+] &=&  a_0 \cdot \left( q^2 \cdot \Gamma(q, c-2) - q \cdot (c-1) \cdot \Gamma(q, c-1)  \right) \\
&&+ a_1 \cdot q \cdot \left( q \cdot \Gamma(q, c-2) - c \cdot \Gamma(q, c-1) \right) \\
&& - a_1 \cdot q \cdot  \left( q^2 \cdot \Gamma(q, c-2) - q \cdot (c-1) \cdot \Gamma(q, c-1)  \right)  \\
E[Q \cdot (Q \wedge c)] &=&  a_0 \cdot (q^2 + q) + a_1 \cdot( 2q^2 + q)  - E[Q \cdot (Q-c)^+]   \\  \nonumber
&&- a_1 \cdot \left [ q^2 \cdot \Gamma(q, c-2) + q \cdot \Gamma(q, c-1) - q^2 \cdot \Gamma(q, c)  \right ] \\
E[Q \cdot \{ Q < c + k \}] &=& a_0 \cdot q \cdot \overline{\Gamma}(q,  c+k-1 ) \\
&&+ a_1 \cdot \left( q^2 \cdot \overline{ \Gamma}(q, z-2) +  q \cdot \overline{ \Gamma}(q, z-1)    - q^2 \cdot \overline{ \Gamma}(q, z-1) \right) \\ 
\end{eqnarray*} 
\end{theorem}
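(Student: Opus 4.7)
The plan is to substitute the truncated Poisson--Charlier expansion
\[
p^{(N)}(x) \;=\; \sum_{j=0}^N a_j\, C_j^q(x)\, w(x), \qquad w(x) = e^{-q} q^x/x!,
\]
(with the unnormalized convention $C_0^q(x) = 1$, $C_1^q(x) = x - q$ from the Appendix) into each rate functional on the left-hand side of the theorem, and then to reduce the resulting weighted sums against $w$ by the Chen--Stein identity together with the incomplete-gamma identities recorded in the Appendix. In this setup each expectation becomes $E^{(N)}[f(Q)] = \sum_{j=0}^N a_j\, E_q[f(Q)\, C_j^q(Q)]$, where $E_q$ denotes expectation under a Poisson distribution of rate $q$, so the job is reduced to evaluating a small number of Poisson expectations.

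First I would handle the zeroth-order case. For $f(x) = (x-c)^+$ I split $(x-c) = x - c$ on the tail $x \ge c+1$; the constant piece contributes $c\,\Gamma(q, c)$ directly from the definition of $\Gamma$, while the linear piece is resolved by the identity $x\, w(x) = q\, w(x-1)$, which is Chen--Stein applied to $f \equiv 1$ and which shifts the lower summation index down by one to give $q\,\Gamma(q, c-1)$. The decomposition $Q \wedge c = Q - (Q-c)^+$ together with $E_q[Q] = q$ then yields the stated $E[Q \wedge c]$, and $E[\{Q < c+k\}]$ is immediate from the partial-sum definition of $\overline{\Gamma}$.

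For the first-order corrections the extra polynomial factor $C_1^q(x) = x - q$ converts an expectation $E_q[f(Q)\,C_1^q(Q)]$ into $E_q[Q f(Q)] - q\, E_q[f(Q)] = q\,\bigl(E_q[f(Q+1)] - E_q[f(Q)]\bigr)$ by one application of Chen--Stein, which is nothing but a discrete increment of the zeroth-order expressions already computed. The same mechanism applied twice produces the product expectations $E[Q(Q-c)^+]$, $E[Q(Q\wedge c)]$, and $E[Q\{Q<c+k\}]$: one factor of $Q$ comes from the functional, the other from $C_1^q$, so Chen--Stein is invoked once for each. Collecting the shifted tail sums into $\Gamma(q,\cdot)$ and $\overline{\Gamma}(q,\cdot)$ and using elementary identities such as $\Gamma(q,c-2)-\Gamma(q,c-1) = w(c-1)$ yields the stated linear combinations in $a_0$ and $a_1$.

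The hard part will be pure bookkeeping: each application of Chen--Stein shifts the lower summation index, and hence the first argument of $\Gamma$ or $\overline{\Gamma}$, by one, so an off-by-one error at any stage propagates through every product term. It is worth noting at the end that the first-order density integrates to the same total mass as the zeroth-order one, since $C_1^q \perp C_0^q = 1$ in $\ell^{2}(w)$, so the coefficient $a_0$ retains its interpretation as the overall normalization and the $a_1$ correction contributes only to higher moments, which is what makes the proposed identities internally consistent.
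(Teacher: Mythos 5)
Your proposal takes essentially the same route as the paper's appendix derivations: substitute the truncated expansion with $C_0^q = 1$, $C_1^q(x) = x-q$, reduce each rate functional via the Chen--Stein identity $E[Q f(Q)] = q\,E[f(Q+1)]$ (applied once per factor of $Q$, shifting tail sums into $\Gamma(q,\cdot)$ and $\overline{\Gamma}(q,\cdot)$), and handle the minimum through $Q \wedge c = Q - (Q-c)^+$. One trivial caveat: your closing remark that the $a_1$ term "contributes only to higher moments" should be read as moments of order $\ge 1$, since $E_1[Q] = a_1 q \neq 0$, but this side observation plays no role in the derivation itself.
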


\subsection{Derivations for Birth-Death Process Rate Functions}
Now that we have a good understanding of the Poisson distribution, we are ready to use the properties of the Poisson distirbution to calculate the rate functions that appear in the functional forward equations.  The derivation of the expectation and covariance terms that arise from the functional forward equations is an integral part of our method and approximations for birth-death processes.  We first start with the zeroth order approximation terms, which assumes the birth-death process has a Poisson distribution.

\subsubsection{Zeroth Order Terms}
In this section, we will calcuate all of the terms that are needed to derive our zeroth order approximation for the mean and variance of our Markov process models.  The first term that we calculate for our explicit approximations is the moments of the Markov process with respect to the Poisson distribution.  Since they are intimately related to the Touchard polynomials, the calculation is quite simple.  

\begin{eqnarray}
E_0[Q^k]  &=& \sum^{\infty}_{m=0} m^k \cdot \mathbb{P}_0( Q = m ) \\
&=& \sum^{\infty}_{m=1} m^k \cdot e^{-q} \frac{q^m}{m!} \left( \sum^{N}_{j=0} a_j \cdot C_j(q, m) \right) \\
&=& \sum^{\infty}_{m=1} m^k \cdot e^{-q} \frac{q^m}{m!} \left( a_0 \cdot C_0(q, m)  \right) \\
&=&  a_0 \cdot T_k
\end{eqnarray}
The next term is used for the Erlang loss model and is used to approximate the effective arrival rate when blocking occurs.  This is also quite simple since it is intimately related to the incomplete gamma function.  

\begin{eqnarray}
E_0[\{ Q < z \}] &=& a_0 \cdot ( 1 - \mathbb{P}(Q \geq z ) )\\
&=& a_0 \cdot \overline{\Gamma}(q, z)
\end{eqnarray}
The next term is also related to the Erlang loss system and is needed to approximate the variance or second moment of the Erlang loss model.  This term has no interpretation like the previous two, however, using the Chen-Stein identity for the Poisson distribution, it is also quite simple to calculate the expectation.  

\begin{eqnarray}
E[Q \cdot \{ Q > z \}] &=& q \cdot E[\{ Q +1 \geq z \}]  \\
&=& q \cdot E[\{ Q \geq z -1 \}]  \\
&=& q \cdot \Gamma(q, z-1) 
\end{eqnarray}

The next term is also used in the approximation for the second moment and variance and we also use the Chen-Stein twice identity to calculate its expectation.  
\begin{eqnarray}
E_0[Q^2 \cdot \{ Q < z \}] &=& E_0[Q^2 ]   - E_0[Q^2 \cdot \{ Q \geq z \}]  \\
&=&  a_0 \cdot (q^2 + q) - \sum^{\infty}_{m=z} m^2 \cdot \mathbb{P}_0( Q = m ) \\
&=&  a_0 \cdot q - \sum^{\infty}_{m=z} m^2 \cdot e^{-q} \frac{q^m}{m!} \\
&=& a_0 \cdot \left( q^2 + q  - q^2 \cdot \Gamma(q, z-2) -  q \cdot \Gamma(q, z-1)  \right) \\
&=& a_0 \cdot \left( q^2 \cdot \overline{ \Gamma}(q, z-2) +  q \cdot \overline{ \Gamma}(q, z-1)   \right) 
\end{eqnarray}

The next term also uses the Chen-Stein identity, but three times to calculate its expectation.  

\begin{eqnarray}
E_0[Q^3 \cdot \{ Q < z \}] &=& E_0[Q^3 ]   - E_0[Q^3 \cdot \{ Q \geq z \}]  \\
&=&  a_0 \cdot  \left( q^3 + 3 \cdot q^2 + q - \sum^{\infty}_{m=z} m^3 \cdot \mathbb{P}_0( Q = m )  \right) \\
&=&  a_0 \cdot  \left( q^3 + 3 \cdot q^2 + q - \sum^{\infty}_{m=z} m^3 \cdot e^{-q} \frac{q^m}{m!}  \right)  \\
&=& a_0 \cdot \left( q^3 \cdot \overline{ \Gamma}(q, z-3) + 3 \cdot  q \cdot \overline{ \Gamma}(q, z-2) + q \cdot \overline{ \Gamma}(q, z-1)  \right) 
\end{eqnarray}

The next term is for the expected number of customers that are currently waiting for service.  This is calculated explicitly using the incomplete gamma function and the Chen-Stein identity.  

\begin{eqnarray*}
E_0[(Q-c)^+] &=& a_0 \cdot \left( \sum^{\infty}_{k=c+1} ( k - c) \cdot e^{-q} \frac{q^k}{k!} \right) \\
&=& a_0 \cdot \left( \sum^{\infty}_{k=c+1} k \cdot e^{-q} \frac{q^k}{k!} - \sum^{\infty}_{k=c+1} c \cdot e^{-q} \frac{q^k}{k!}  \right)\\ 
&=& a_0 \cdot \left( \sum^{\infty}_{k=c+1} q \cdot e^{-q} \frac{q^{k-1}}{(k-1)!} - \sum^{\infty}_{k=c+1} c \cdot e^{-q} \frac{q^k}{k!}  \right)\\ 
&=& a_0 \cdot \left( \sum^{\infty}_{k=c} q \cdot e^{-q} \frac{q^{k}}{k!} - \sum^{\infty}_{k=c+1} c \cdot e^{-q} \frac{q^k}{k!} \right) \\ 
&=& a_0 \cdot \left( q \cdot \Gamma(q, c-1) - c \cdot \Gamma(q, c) \right) \\
\end{eqnarray*}

The next term is the expected number of customers that are currently being served by an agent.  This is is calculated explicitly using the following relation between the maximum and minimum and the previous results 

\begin{equation}\label{id}
(Q \wedge c) = Q - (Q-c)^+.
\end{equation}

\begin{eqnarray*}
E_0[(Q \wedge c)]  &=&E_0[Q] - E_0[(Q-c)^+] \\
&=& a_0 \cdot \left( q - q \cdot \Gamma(q, c-1) + c \cdot \Gamma(q, c) \right) \\
\end{eqnarray*}

For the next term we use the Chen-Stein identity again to easily compute the expectation.  

\begin{eqnarray*}
E_0[Q \cdot (Q-c)^+]  &=& a_0 \cdot \left( \sum^{\infty}_{k=c+1} ( k^2 - c \cdot k) \cdot e^{-q} \frac{q^k}{k!} \right) \\
&=& a_0 \cdot \left( \sum^{\infty}_{k=c+1} k^2 \cdot e^{-q} \frac{q^k}{k!} - \sum^{\infty}_{k=c+1} k \cdot c \cdot e^{-q} \frac{q^k}{k!} \right) \\ 
&=& a_0 \cdot \left( \sum^{\infty}_{k=c+1} k \cdot q \cdot e^{-q} \frac{q^{k-1}}{(k-1)!} - \sum^{\infty}_{k=c+1}  q \cdot c \cdot e^{-q} \frac{q^{k-1}}{(k-1)!}  \right)\\ 
&=& a_0 \cdot \left( \sum^{\infty}_{k=c} q^2 \cdot (k-1) \cdot e^{-q} \frac{q^{k-2}}{(k-1)!} + \sum^{\infty}_{k=c+1} q \cdot e^{-q} \frac{q^{k-1}}{(k-1)!} - \sum^{\infty}_{k=c} q \cdot c \cdot e^{-q} \frac{q^k}{k!}  \right) \\ 
&=& a_0 \cdot \left( q^2 \cdot \Gamma(q, c-2) - q \cdot (c-1) \cdot \Gamma(q, c-1)  \right) \\
\end{eqnarray*}

This term is also computed using Equation \ref{id} and previously calculated terms.  

\begin{eqnarray*}
E_0[Q \cdot (Q \wedge c) ]  &=& E_0\left[ Q^2 \right]   - E_0[ Q \cdot (Q-c)^+] \\
&=& a_0 \cdot \left( q^2 + q - q^2 \cdot \Gamma(q, c-2) + q \cdot (c-1) \cdot \Gamma(q, c-1) \right)
\end{eqnarray*}

Finally, the following covariance terms are also calculated using the previous terms.  

\begin{eqnarray*}
\mathrm{Cov}_0[Q , (Q-c)^+]  &=& \left( E_0[Q \cdot (Q-c)^+]    - E_0[Q] \cdot E_0[(Q-c)^+] \right) \\
&=&a_0 \cdot \left( q^2 \cdot \Gamma(q, c-2) - q \cdot (c-1) \cdot \Gamma(q, c-1)  \right) \\
&-& a_0 \cdot q \cdot a_0 \cdot \left( q \cdot \Gamma(q, c-1) - c \cdot \Gamma(q, c) \right)   \\
&=& a_0 \cdot \left( q^2 \cdot \Gamma(q, c-2) - q \cdot (c-1) \cdot \Gamma(q, c-1)  \right) \\
&-& a_0^2 \cdot q \cdot \left( q \cdot \Gamma(q, c-1) - c \cdot \Gamma(q, c) \right)   \\
\end{eqnarray*}

\begin{eqnarray*}
\mathrm{Cov}_0[Q , (Q \wedge c)]  &=& \left( E_0[Q \cdot (Q \wedge c)]    - E_0[Q] \cdot E_0[(Q \wedge c)] \right) \\
&=&a_0 \cdot q - a_0 \cdot \left( q^2 \cdot \Gamma(q, c-2) - q \cdot (c-1) \cdot \Gamma(q, c-1)  \right) \\
&+& a_0 \cdot q \cdot a_0 \cdot \left( q \cdot \Gamma(q, c-1) - c \cdot \Gamma(q, c) \right)   \\
&=&a_0 \cdot q - a_0 \cdot \left( q^2 \cdot \Gamma(q, c-2) - q \cdot (c-1) \cdot \Gamma(q, c-1)  \right) \\
&+& a_0^2 \cdot q \cdot \left( q \cdot \Gamma(q, c-1) - c \cdot \Gamma(q, c) \right)   \\
\end{eqnarray*}

\subsubsection{First Order Correction Terms}
Now we extend our approximations to the first order correction to the Poisson distribution where all functions are multiplied by the term $ a_1 \cdot (Q - q) $ to incorporate the first Poisson Charlier polynomial.  Like in the zeroth order case, many of the terms use the Chen-Stein identity and Equation \ref{id}.

\begin{eqnarray*}
E_1[Q^k]  &=& \sum^{\infty}_{k=0} m^k \cdot \mathbb{P}_1( Q = m ) \\
&=& \sum^{\infty}_{m=1} m^k \cdot e^{-q} \frac{q^m}{m!} \left( \sum^{1}_{j=0} a_j \cdot C_j(q, m) \right) \\
&=& \sum^{\infty}_{m=1} m^k \cdot e^{-q} \frac{q^m}{m!} \left( a_0 \cdot C_0(q, m) + a_1 \cdot C_1(q, m)   \right) \\
&=&  a_1 \cdot ( T_{k+1} - q \cdot T_k )
\end{eqnarray*}

\begin{eqnarray*}
E_1[\{ Q < z \}] &=&a_1 \cdot E[ Q \cdot \{ Q < z \} ] - a_1 \cdot q \cdot E[\{ Q < z \}] \\
&=& a_1 \cdot \left( q - q \cdot \Gamma(q, z-1) - q \cdot \Gamma(q, z) \right)
\end{eqnarray*}

\begin{eqnarray*}
E_1[Q \cdot \{ Q < z \}] &=& a_1 \cdot \left( E[Q^2 \cdot \{ Q < z \} ]   - q \cdot E[Q \cdot \{ Q < z \}]  \right) \\
&=& a_1 \cdot \left( E[Q^2 \cdot \{ Q < z \} ]   - q \cdot E[Q \cdot \{ Q \geq z \}]  \right) \\
&=& a_1 \cdot \left( q^2 \cdot \overline{ \Gamma}(q, z-2) +  q \cdot \overline{ \Gamma}(q, z-1)    - q^2 \cdot \overline{ \Gamma}(q, z-1) \right) \\
\end{eqnarray*}

\begin{eqnarray*}
E_1[Q^2 \cdot \{ Q < z \}] &=& a_1 \cdot \left( E[Q^3 \cdot \{ Q < z \} ]   - q \cdot E[Q^2 \cdot \{ Q < z \}]  \right) \\
&=&  a_1 \cdot \left( q^3 \cdot \overline{ \Gamma}(q, z-3) + 3 \cdot  q \cdot \overline{ \Gamma}(q, z-2) + q \cdot \overline{ \Gamma}(q, z-1) \right) \\
&-& a_1 \cdot \left( q^2 \cdot \overline{ \Gamma}(q, z-2) -  q \cdot \overline{ \Gamma}(q, z-1) \right) 
\end{eqnarray*}


\begin{eqnarray*}
E_1[(Q-c)^+] &=& a_1 \cdot E[Q \cdot (Q-c)^+] - q \cdot E[(Q-c)^+] \\ 
&=& a_1 \cdot  \left( q^2 \cdot \Gamma(q, c-2) - q \cdot (c-1) \cdot \Gamma(q, c-1)  \right) - q \cdot \left( q \cdot \Gamma(q, c-1) - c \cdot \Gamma(q, c) \right) \\  
\end{eqnarray*}

\begin{eqnarray*}
E_1[Q \cdot (Q-c)^+] &=& a_1 \cdot E[Q^2 \cdot (Q-c)^+] - q \cdot E[Q \cdot (Q-c)^+] \\ 
&=& a_1 \cdot q \cdot E[(Q+1) \cdot (Q+1-c)^+] - q \cdot E[Q \cdot (Q-c)^+] \\ 
&=& a_1 \cdot q \cdot E[Q \cdot (Q+1-c)^+] + a_1 \cdot q \cdot E[ (Q+1-c)^+]  - q \cdot E[Q \cdot (Q-c)^+] \\ 
&=& a_1 \cdot q \cdot \left( q^2 \cdot \Gamma(q, c-3) - q \cdot (c-1) \cdot \Gamma(q, c-2)  \right) \\
&& + a_1 \cdot q \cdot \left( q \cdot \Gamma(q, c-2) - c \cdot \Gamma(q, c-1) \right) \\
&& - a_1 \cdot q \cdot  \left( q^2 \cdot \Gamma(q, c-2) - q \cdot (c-1) \cdot \Gamma(q, c-1)  \right) \\  
\end{eqnarray*}

\begin{eqnarray*}
E_1[Q \cdot (Q \wedge c) ] &=& E_1\left[ Q^2 \right]   - E_1[ Q \cdot (Q-c)^+] \\
&=& q^3 + 3 \cdot q^2 + q - q^2 - q - q^2 \cdot \Gamma(q, c-2) - q \cdot \Gamma(q, c-1) + q^2 \cdot \Gamma(q, c) \\
&=& 2 \cdot q^2 + q - q^2 \cdot \Gamma(q, c-2) - q \cdot \Gamma(q, c-1) + q^2 \cdot \Gamma(q, c) \\
\end{eqnarray*}
Thus, the first order approximation of the covariance terms have the following expressions in terms of previously calculated ones:

\begin{eqnarray*}
\mathrm{Cov}[Q , (Q-c)^+]  &=& \left( E_0[Q \cdot (Q-c)^+] + E_1[Q \cdot (Q-c)^+]  \right) \\
&&- \left( E_0[Q] + E_1[Q] \right) \cdot \left( E_0[(Q-c)^+] + E_1[(Q-c)^+] \right)  \\
\end{eqnarray*}

\begin{eqnarray*}
\mathrm{Cov}_1[Q , (Q \wedge c) ]  &=& \mathrm{Var}[Q] - \mathrm{Cov}[Q , (Q-c)^+] \\
\end{eqnarray*}

\begin{eqnarray*}
\mathrm{Cov}[Q , \{ Q < z \} ]  &=&  \left( E_0[Q \cdot \{ Q < z \}] + E_1[Q \cdot \{ Q < z \}]  \right) \\
&&- \left( E_0[Q] + E_1[Q] \right) \cdot \left( E_0[\{ Q < z \}] + E_1[\{ Q < z \}] \right)  \\
\end{eqnarray*}

%

We stop here at the zeroth and first order approximations, however, we can also derive similar expressions of the rate functions for higher moments and higher orders of the approximation using the same methodolgy and Chen-Stein identity if they are needed in other applications or settings.


\newcommand{\doi}[1]{\href{http://dx.doi.org/#1}{doi:#1}}
\newcommand{\available}[1]{Available at \url{#1}}
\newcommand{\availablet}[2]{Available at \href{#1}{#2}}

\bibliographystyle{abbrvnat}
\bibliography{ep}

\end{document}